\newtheorem{theorem}{Theorem}[section]
\newtheorem{corollary}[theorem]{Corollary}
\newtheorem{lemma}[theorem]{Lemma}
\newtheorem{proposition}[theorem]{Proposition}
\newtheorem{remark}[theorem]{Remark}
\theoremstyle{definition}
\newtheorem{definition}[theorem]{Definition}
\numberwithin{equation}{section}
\newcommand{\R}{\mathbb R}
\newcommand{\Z}{\mathbb Z}
\newcommand{\N}{\mathbb N}
\begin{document}

\title[Amenable packing topological entropy]{Packing topological entropy for amenable group actions}

\author [Dou Dou, Dongmei Zheng and Xiaomin Zhou]{Dou Dou*, Dongmei Zheng and Xiaomin Zhou}

\address{D. Dou: Department of Mathematics, Nanjing University,
Nanjing, Jiangsu, 210093, P.R. China} \email{doumath@163.com}
\address{D. Zheng: Department of Applied Mathematics, College of Science, Nanjing Tech University, Nanjing, Jiangsu, 211816, P.R. China} \email{dongmzheng@163.com}
\address{X. Zhou: School of Mathematics and Statistics, Huazhong University of Science and Technology,
Wuhan, Hubei, 430074, P.R. China}
\email{zxm12@mail.ustc.edu.cn}
\subjclass[2020] {Primary: 37B40, 28D20, 37A15}
\thanks{* Corresponding author}
\keywords {packing topological entropy, amenable group, variational principle, generic point}

\begin{abstract}
Packing topological entropy is a dynamical analogy of the packing dimension, which can be viewed as a counterpart of Bowen topological entropy.
In the present paper, we will give a systematically study to the packing topological entropy for a continuous $G$-action dynamical system $(X,G)$, where $X$ is a compact metric space and $G$ is a countable infinite discrete amenable group.

We first prove a variational principle for amenable packing topological entropy: for any Borel subset $Z$ of $X$, the packing topological entropy of $Z$ equals the supremum of upper local entropy over all Borel probability measures for which the subset $Z$ has full measure.
And then we obtain an entropy inequality concerning amenable packing entropy. Finally we show that the packing topological entropy of the set of generic points for any invariant Borel probability measure $\mu$ coincides with the metric entropy if either $\mu$ is ergodic or the system satisfies a kind of specification property.
\end{abstract}

\maketitle

\section{Introduction}
In 1973, in his profound and lasting paper \cite{B2}, Bowen introduced a definition of topological entropy of subset inspired by Hausdorff dimension,
which is now known as Bowen topological entropy or dimensional entropy. For dynamical systems over compact Hausdorff spaces,
Bowen showed that Bowen topological entropy on the whole space coincides with the Adler-Konheim-McAndrew topological entropy defined through open covers.

Bowen topological entropy can be viewed as a dynamical analogy of Hausdorff dimension and it exhibits very deep connections
with dimension theory in dynamical system and multifractal analysis ever since its appearance (see, for example, \cite{P}). It is natural to consider the analogous
concepts in dynamical systems for other forms of dimensions. For pointwise dimension (of a measure), its dynamical correspondence is the Brin-Katok's local entropy (\cite{BK}).
For packing dimension, its dynamical correspondence is the packing topological entropy,
which was introduced by Feng and Huang \cite{FH}. Applying the methods in geometric measure theory, they also provided variational principles for Bowen topological entropy and packing topological entropy.
Other works on packing topological entropy can be found in \cite{ZCC}, where the packing topological entropy for certain noncompact subsets was considered.

In this paper, we will focus on packing topological entropy in the framework of countable discrete amenable group actions.

\subsection{Amenable packing entropy and local entropies}\

Let $(X,G)$ be a $G-$action topological dynamical system, where $X$ is a compact metric space with metric $d$ and $G$ is a topological group acting continuously on $X$.
Throughout this paper, we always assume that $G$ is a countable infinite discrete amenable group unless it is specially mentioned.
Recall that a countable discrete group $G$ is {\it amenable} if there is a sequence of non-empty
finite subsets $\{F_n\}$ of $G$ which are asymptotically invariant, i.e.
$$\lim_{n\rightarrow+\infty}\frac{|F_n\vartriangle gF_n|}{|F_n|}=0, \text{ for all } g\in G.$$
Such sequences are called {\it F{\o}lner sequences}. One may refer to \cite{KL,OW} for more details on amenable groups and their actions.
A F{\o}lner sequence $\{F_n\}$ in $G$ is said to be {\it tempered} if there exists a constant $C>0$ which is independent of $n$ such that
\begin{align}\label{tempered}
|\bigcup_{k<n}F_k^{-1}F_n|\le C|F_n|, \text{ for any }n.
\end{align}

Let $F(G)$ denote the collection of finite subsets of $G$. For $F\in F(G)$, let $d_F$ be the metric defined by
\begin{align*}
  d_F(x,y)=\max_{g\in F}d(gx,gy), \text{ for }x,y\in X.
\end{align*}
Let $\varepsilon>0$ and $x$ in $X$, we denote by
\begin{align*}
B_{F}(x,\varepsilon)&=\{y\in X: d_{F}(x,y)<\varepsilon\}
\end{align*}
and
\begin{align*}
  \overline{B}_{F}(x,\varepsilon)&=\{y\in X: d_{F}(x,y)\le \varepsilon\},
\end{align*}
which are respectively the open and closed {\it ($F$-)Bowen balls} with center $x$ and radius $\varepsilon$.
When we want to clarify the underlying metric $d$, we also denote the above balls by $B_{F}(x,\varepsilon,d)$ and $\overline{B}_{F}(x,\varepsilon,d)$.
For a $\Z$-action (or $\N$-action) topological dynamical system $(X,T)$ ($T$ is the homeomorphism (or the continuous onto map) on $X$), let $F=\{0,1,\cdots,n-1\}:=[0,n-1]$. The ($F$-)Bowen balls will be written as $B_n(x,\varepsilon,T)$ or $B_n(x,\varepsilon,d)$.
\begin{definition}
  Let $\{F_n\}$ be a sequence of finite subsets of $G$ with $|F_n|\rightarrow \infty$ (need not to be F{\o}lner).
For $Z\subseteq X, s\ge0, N\in\N$ and $\varepsilon> 0$, define
$$P(Z,N,\varepsilon,s,\{F_n\}) = \sup\sum_i\exp\bigg(-s|F_{n_i}|\bigg),$$
where the supremum is taken over all finite or countable pairwise disjoint families $\{\overline{B}_{F_{n_i}}(x_i,\varepsilon)\}$ such that
$x_i\in Z, n_i\ge N$ for all $i$. The quantity $P(Z,N,\varepsilon,s,\{F_n\})$
does not increase as $N$ increases, hence the following limit exists:
$$P(Z,\varepsilon,s,\{F_n\})=\lim_{N\rightarrow+\infty}P(Z,N,\varepsilon,s,\{F_n\}).$$
Define
\begin{align*}
  \mathcal{P}(Z,\varepsilon,s,\{F_n\})=\inf \{\sum_{i=1}^{+\infty}P(Z_i,\varepsilon,s,\{F_n\}): \bigcup_{i=1}^{+\infty}Z_i\supset Z\}.
\end{align*}
It is easy to see that if $Z\subseteq \bigcup\limits_{i=1}^{+\infty}Z_i$, then $\mathcal{P}(Z,\varepsilon,s,\{F_n\})\le \sum\limits_{i=1}^{+\infty}\mathcal{P}(Z_i,\varepsilon,s,\{F_n\})$.
There exists a critical value of the parameter $s$, which we will denote by $h_{top}^{P}(Z,\varepsilon, \{F_n\})$, where $\mathcal{P}(Z,\varepsilon,s,\{F_n\})$ jumps from $+\infty$ to $0$, i.e.,
\begin{equation*}
\mathcal{P}(Z,\varepsilon,s,\{F_n\})=\begin{cases}
                             0, \quad\  s > h_{top}^P(Z,\varepsilon, \{F_n\}),\\
                             +\infty, s < h_{top}^P(Z,\varepsilon, \{F_n\}).
                        \end{cases}
\end{equation*}
It is not hard to see that $h_{top}^P(Z,\varepsilon, \{F_n\})$ increases when $\varepsilon$ decreases. We call
\begin{align*}
  h_{top}^P(Z,\{F_n\}):=\lim_{\varepsilon\rightarrow 0} h_{top}^P(Z,\varepsilon, \{F_n\})
\end{align*}
the {\it amenable packing topological entropy} ({\it amenable packing entropy} or {\it packing entropy}, for short) of $Z$ (w.r.t. the F{\o}lner sequence $\{F_n\}$).
\end{definition}

Let $M(X)$ denote the collection of Borel probability measures on $X$.

\begin{definition}
Let $\{F_n\}$ be a sequence of finite subsets of $G$ with $|F_n|\rightarrow \infty$. For $\mu\in M(X)$ and $Z\in \mathcal{B}(X)$ (the Borel $\sigma$-algebra on $X$), denote by
$$\overline h_{\mu}^{loc}(Z,\{F_n\})=\int_Z\lim_{\varepsilon\rightarrow 0}\limsup_{n\rightarrow +\infty}-\frac{1}{|F_n|}\log \mu(B_{F_n}(x,\varepsilon))d\mu$$
and
$$\underline h_{\mu}^{loc}(Z,\{F_n\})=\int_Z\lim_{\varepsilon\rightarrow 0}\liminf_{n\rightarrow +\infty}-\frac{1}{|F_n|}\log \mu(B_{F_n}(x,\varepsilon))d\mu,$$
which are called the {\it upper local entropy} and the {\it lower local entropy} of $\mu$ over $Z$ (w.r.t $\{F_n\}$), respectively.

For $\Z$-action or $\N$-action case, if the sequence $\{F_n\}$ is chosen to be $F_n=[0,n-1]$ (hence $\{F_n\}$ is naturally a F{\o}lner sequence), the local entropies of a topological dynamical system $(X,T)$ will be denoted by $\underline h_{\mu}^{loc}(Z,T)$ and $\overline h_{\mu}^{loc}(Z,T)$, respectively.
\end{definition}
By the Brin-Katok's entropy formula for amenable group actions (see \cite{ZC2}), if $\mu$ is in addition $G$-invariant and $\{F_n\}$ is a tempered
F{\o}lner sequence which satisfies the growth condition
\begin{equation}\label{eq-1-1}
    \lim\limits_{n\rightarrow+\infty}\frac{|F_n|}{\log n}=+\infty,
\end{equation}
then the values of the upper and lower local entropies over the whole space $X$ coincide with the measure-theoretic entropy of the system $(X,G)$.

We will prove the following variational principle between amenable packing entropy and upper local entropy.

\begin{theorem}\label{th-viriational-pack}\
Let $(X,G)$ be a $G-$action topological dynamical system and $G$ a countable infinite discrete amenable group. Let $\{F_n\}$ be a sequence of finite subsets in $G$ satisfying the growth condition \eqref{eq-1-1}.
Then for any non-empty Borel subset $Z$ of $X$,
  $$h^P_{top}(Z,\{F_n\})=\sup\{\overline h_{\mu}^{loc}(Z,\{F_n\}):\mu\in M(X),\mu(Z)=1\}.$$
\end{theorem}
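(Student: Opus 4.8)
The plan is to prove the two inequalities separately, adapting the geometric–measure–theoretic scheme of Feng and Huang \cite{FH} to the F\o lner sequence $\{F_n\}$. For the \emph{lower bound} $h^P_{top}(Z,\{F_n\})\ge\sup_\mu\overline h_{\mu}^{loc}(Z,\{F_n\})$, I would fix $\mu$ with $\mu(Z)=1$. Since $\mu(Z)=1$ forces $\int_Z \overline h_{\mu}^{loc}\,d\mu$ to be at most the essential supremum of $\overline h_{\mu}^{loc}(x)$, it suffices to show $h^P_{top}(Z,\{F_n\})\ge s$ whenever $s>0$ satisfies $\mu(Z_s)>0$, where $Z_s=\{x\in Z:\overline h_{\mu}^{loc}(x)>s\}$. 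For $x\in Z_s$ there are, at a suitable small $\varepsilon$, infinitely many $n$ with $\mu(\overline B_{F_n}(x,\varepsilon))\le e^{-s|F_n|}$. I would feed the resulting family of Bowen balls into a Vitali-type ($5r$) covering lemma for Bowen balls to extract a pairwise disjoint subfamily whose enlargements still cover $Z_s$; comparing measures then yields $P(Z_s,N,\varepsilon,s,\{F_n\})\ge\mu(Z_s)>0$ for every $N$. Because $|F_n|\to\infty$, a disjoint family with $\sum_i e^{-s|F_{n_i}|}$ bounded below and $n_i\ge N$ forces $\sum_i e^{-t|F_{n_i}|}\to+\infty$ as $N\to\infty$ for every $t<s$; hence $\mathcal P(Z_s,\varepsilon,t,\{F_n\})=+\infty$, giving $h^P_{top}(Z_s,\{F_n\})\ge s$ and therefore $h^P_{top}(Z,\{F_n\})\ge s$.

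The \emph{upper bound} $h^P_{top}(Z,\{F_n\})\le\sup_\mu\overline h_{\mu}^{loc}(Z,\{F_n\})$ is the harder half and the main obstacle. Assume $h^P_{top}(Z,\{F_n\})>s$ and choose $\varepsilon$ with $h^P_{top}(Z,\varepsilon,\{F_n\})>s$, so that $\mathcal P(Z,\varepsilon,s,\{F_n\})=+\infty$. The goal is to manufacture a single $\mu$ with $\mu(Z)=1$ and $\overline h_{\mu}^{loc}(x)\ge s$ for $\mu$-a.e.\ $x$, which gives $\int_Z\overline h_{\mu}^{loc}\,d\mu\ge s$. First I would pass to a subset $Z'\subseteq Z$ on which the packing pre-measure is locally infinite, i.e.\ $P(Z'\cap U,\varepsilon,s,\{F_n\})=+\infty$ for every open $U$ meeting $Z'$; the discarded part carries no $\mathcal P(\cdot,\varepsilon,s,\{F_n\})$-mass, so this reduction is harmless. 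Then I would run a Moran/Cantor construction: inductively choose nested finite families of pairwise disjoint Bowen balls $\overline B_{F_n}(x,\varepsilon)$ with centres in $Z'$ and strictly increasing times $n$, each family having $\sum_i e^{-s|F_{n_i}|}$ large, and distribute the mass of each generation across its balls proportionally to $e^{-s|F_{n_i}|}$. The weak-$*$ limit is a probability measure $\mu$ concentrated on the resulting Cantor set.

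The crux is the local-entropy estimate for this $\mu$: one must show every Bowen ball $B_{F_n}(x,\varepsilon)$ receives mass at most $e^{-s|F_n|}$ up to a sub-exponential factor, so that $-\frac1{|F_n|}\log\mu(B_{F_n}(x,\varepsilon))\ge s-o(1)$ and hence $\overline h_{\mu}^{loc}(x)\ge s$. Controlling how one fixed Bowen ball can meet balls coming from several generations is where the construction is delicate, and it is precisely here that the growth condition \eqref{eq-1-1} enters: since $\log n=o(|F_n|)$, the number of admissible scales up to generation $n$ is $e^{o(|F_n|)}$ and can be absorbed into $e^{-s|F_n|}$. Two further technical points I expect to be the real work are (i) proving the covering lemma for the balls $\{\overline B_{F_n}(x,\varepsilon)\}$ when the sets $F_n$ are \emph{not} nested, so that the convenient containment $\overline B_{F_m}(y,\varepsilon)\subseteq\overline B_{F_n}(x,3\varepsilon)$ valid for $F_n\subseteq F_m$ is unavailable and must be replaced by a selection ordered by $|F_n|$ together with a bounded-overlap estimate; and (ii) guaranteeing $\mu(Z)=1$, since the centres lie in $Z'\subseteq Z$ but the limiting Cantor set need not, so for Borel (rather than closed) $Z$ one must either choose the generations so that the support stays inside $Z$ or first reduce to an inner compact approximation that retains the full mass.
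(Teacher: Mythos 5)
Your overall strategy (adapting Feng--Huang) is the paper's strategy, but both halves as written contain genuine gaps, and the gaps are exactly where the paper's key lemmas do the work. In the lower bound, you apply a Vitali/$5r$ selection to the family $\{\overline B_{F_{n_i}}(x,\varepsilon)\}$ in which the times $n_i$ vary from ball to ball. The $5r$-lemma (Lemma \ref{lemma-5r}) is a statement about balls of a \emph{single} metric; for a general F{\o}lner sequence the sets $F_n$ are not nested, so intersecting Bowen balls of different times admit no inclusion of the form $\overline B_{F_m}(y,\varepsilon)\subseteq\overline B_{F_n}(x,3\varepsilon)$, and no covering lemma for such mixed-time families is available --- you flag this very difficulty yourself, but it sits inside your lower bound, unresolved. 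The paper avoids it by a pigeonhole: set $E_n=\{x\in E:\mu(B_{F_n}(x,\varepsilon))<e^{-|F_n|(s+\delta)}\}$, pick a single $n\ge N$ with $\mu(E_n)\ge\mu(E)/(n(n+1))$, and apply Lemma \ref{lemma-5r} in the one metric $d_{F_n}$; the polynomial loss is then absorbed precisely by the growth condition \eqref{eq-1-1}, via $e^{\delta|F_n|}/(n(n+1))\to+\infty$. It is telling that your lower bound never invokes \eqref{eq-1-1}: that is exactly where the hypothesis is needed. Second, the inference ``$P(Z_s,\varepsilon,t,\{F_n\})=+\infty$, hence $\mathcal P(Z_s,\varepsilon,t,\{F_n\})=+\infty$'' is invalid: $\mathcal P$ is an infimum over countable covers and can be $0$ while $P$ is infinite (this pre-measure/measure discrepancy is the whole point of the definition). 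One must show every countable cover $\{A_i\}$ of the set has a member with infinite $P$; the paper does this by proving $P(E,\varepsilon/5,s,\{F_n\})=+\infty$ for \emph{every} Borel subset $E$ of positive measure, and then handling arbitrary, possibly non-Borel, covers by passing to the closures $\overline A_i\cap A$ at the cost of shrinking the radius, which is what Lemma \ref{lemma-3-3} is for.

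In the upper bound your architecture (localize where the packing pre-measure is infinite, run a Joyce--Preiss/Moran construction with mass distributed proportionally to $e^{-s|F_{n_i}|}$, take a weak$^*$ limit) matches Proposition \ref{prop-upper}, with the disjoint generations supplied by Lemma \ref{lemma-packing}. But the two points you defer are the actual content. For $\mu(Z)=1$: ``inner compact approximation'' is circular, since at that stage there is no measure on $Z$ to be inner regular for --- producing one is the goal. The paper resolves this using that Borel sets are analytic: writing $Z=\phi(\mathcal N)$, it constrains the $p$-th generation to lie in $Z_{n_1,\ldots,n_p}=\phi(\Gamma_{n_1,\ldots,n_p})$ and shows by a diagonal argument, using continuity of $\phi$, that $\bigcap_p\overline{Z_{n_1,\ldots,n_p}}=\bigcap_p Z_{n_1,\ldots,n_p}\subseteq Z$, so the limit set $K$ is a compact subset of $Z$. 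For the local estimate: the growth condition is not what controls multi-generation overlaps (in the paper \eqref{eq-1-1} is never used in the upper bound). Instead, the construction enforces separation conditions --- each Bowen ball $\overline B_{F_{m_i(x)}}(z,\varepsilon)$ with $z$ near $x\in K_i$ is disjoint from all other $i$-th generation pieces --- and telescoping mass bounds $\mu_i(V)\le\mu_j(V)\le C\mu_i(V)$ with $C=\prod_{n\ge1}(1+2^{-n})$, yielding $\tilde\mu(\overline B_{F_{m_i(x)}}(z,\varepsilon))\le Ce^{-|F_{m_i(x)}|s}$ with a universal constant $C$ rather than a sub-exponential factor. So both halves of your proposal need repairs, and the repairs are the paper's pigeonhole-plus-growth argument, its closure lemma, and its analyticity argument, none of which are routine.
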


\subsection{Amenable packing entropy inequalities via factor maps}\

Let $(X,G)$ and $(Y,G)$ be two $G$-action topological dynamical systems.  A continuous map $\pi:(X,G)\to(Y,G)$ is called a homomorphism or a factor map from $(X,G)$ to $(Y,G)$ if it is onto and $\pi\circ g=g\circ\pi$, for all $g\in G$. We also say that $(X,G)$ is an extension of $(Y,G)$ or $(Y,G)$ is a factor of $(X,G)$.

For a subset $Z$ of $X$, we denote by $h_{top}^{UC}(Z,\{F_n\})$ the upper capacity topological entropy of $Z$ (defined in Section 2). We will show in section 2 that the packing entropy can be estimated via the upper capacity topological entropy with parameters (Proposition \ref{packing-uc}). This is a dynamical version of the fact that the packing dimension can be defined via the upper Minkowski dimension. Applying Proposition \ref{packing-uc}, we can prove the following packing entropy inequalities for factor maps.

\begin{theorem}\label{thm2}\
Let $G$ be a countable infinite discrete amenable group and $\pi:(X,G)\to(Y,G)$ be a factor map between two $G$-action topological dynamical systems. Let $\{F_n\}$  be any tempered F{\o}lner sequence in $G$ satisfying the growth condition \eqref{eq-1-1}.
Then for any Borel subset $E$ of $X$,
\begin{equation}\label{eq-1-2}
    h^P_{top}(\pi (E),\{F_n\})\leq  h^P_{top}( E,\{F_n\})\leq  h^P_{top}(\pi (E),\{F_n\})+\sup_{y\in Y}h_{top}^{UC}(\pi^{-1}(y),\{F_n\}).
\end{equation}
\end{theorem}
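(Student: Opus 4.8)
The plan is to prove the two inequalities separately. The left one is a soft consequence of the equivariance and uniform continuity of $\pi$, while the right one rests on the packing/upper-capacity comparison of Proposition~\ref{packing-uc} together with a fibered counting estimate, whose uniform form is the real obstacle.

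For the left inequality, recall that since $X$ is compact and $\pi$ commutes with every $g\in G$, $\pi$ is uniformly continuous: for each $\varepsilon>0$ there is $\delta>0$ with $d(x,x')<\delta\Rightarrow d(\pi x,\pi x')<\varepsilon$, and hence, by equivariance, $d_{F}(x,x')<\delta\Rightarrow d_{F}(\pi x,\pi x')<\varepsilon$ for every finite $F\subseteq G$, so that $\pi(\overline{B}_{F}(x,\delta))\subseteq\overline{B}_{F}(\pi x,\varepsilon)$. Given any pairwise disjoint family $\{\overline{B}_{F_{n_i}}(\pi x_i,\varepsilon)\}$ centered in $\pi(Z)$, I would lift each center $\pi x_i$ to some $x_i\in Z$ and note that the balls $\{\overline{B}_{F_{n_i}}(x_i,\delta)\}$ are again pairwise disjoint, because their $\pi$-images lie in disjoint sets. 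This lifting gives $P(\pi(Z),N,\varepsilon,s,\{F_n\})\le P(Z,N,\delta,s,\{F_n\})$ and, applied to each member of a cover, $\mathcal P(\pi(Z),\varepsilon,s,\{F_n\})\le\mathcal P(Z,\delta,s,\{F_n\})$. Comparing critical values yields $h^P_{top}(\pi(Z),\varepsilon,\{F_n\})\le h^P_{top}(Z,\delta,\{F_n\})$, and letting $\varepsilon\to0$ (so $\delta\to0$) gives $h^P_{top}(\pi(E),\{F_n\})\le h^P_{top}(E,\{F_n\})$.

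For the right inequality set $a=\sup_{y\in Y}h_{top}^{UC}(\pi^{-1}(y),\{F_n\})$. By Proposition~\ref{packing-uc} the packing entropy of a set is the infimum of $\sup_i h_{top}^{UC}(\cdot)$ over its countable covers, so for any $\gamma>0$ I would choose a countable cover $\pi(E)\subseteq\bigcup_j W_j$ with $\sup_j h_{top}^{UC}(W_j,\{F_n\})\le h^P_{top}(\pi(E),\{F_n\})+\gamma$. Setting $E_j:=E\cap\pi^{-1}(W_j)$ gives $E=\bigcup_j E_j$ and $\pi(E_j)\subseteq W_j$. The core step is the fibered estimate for upper-capacity entropy of subsets,
\[
h_{top}^{UC}(E_j,\{F_n\})\le h_{top}^{UC}(\pi(E_j),\{F_n\})+a\le h_{top}^{UC}(W_j,\{F_n\})+a .
\]
Granting it, $\sup_j h_{top}^{UC}(E_j,\{F_n\})\le h^P_{top}(\pi(E),\{F_n\})+\gamma+a$, and since $\{E_j\}$ is an admissible cover of $E$, Proposition~\ref{packing-uc} gives $h^P_{top}(E,\{F_n\})\le\sup_j h_{top}^{UC}(E_j,\{F_n\})$; letting $\gamma\to0$ finishes the proof.

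It remains to establish the displayed fibered estimate, which is where the main difficulty lies. Fixing $\delta>0$, I would bound a maximal $(F_n,\delta)$-separated subset of $E_j$ by grouping its points according to a maximal $(F_n,\varepsilon)$-separated subset of $\pi(E_j)$: each group lies in a single tube $\pi^{-1}(\overline{B}_{F_n}(y,\varepsilon))$ and is still $(F_n,\delta)$-separated, so that
\[
s_n(E_j,\delta)\le s_n(\pi(E_j),\varepsilon)\cdot\max_{y\in Y}s_n\big(\pi^{-1}(\overline{B}_{F_n}(y,\varepsilon))\cap E_j,\ \delta\big),
\]
where $s_n(\cdot,\cdot)$ denotes maximal separated cardinality with respect to $F_n$. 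Applying $\tfrac{1}{|F_n|}\log$ and $\limsup_n$ reduces everything to a uniform tube estimate: for all $\delta,\eta>0$ there should exist $\varepsilon>0$ and $N$ with $\tfrac{1}{|F_n|}\log s_n\big(\pi^{-1}(\overline{B}_{F_n}(y,\varepsilon)),\delta\big)\le a+\eta$ for every $n\ge N$ and every $y\in Y$. This uniform bound is the main obstacle: for a single fiber it is immediate from $h_{top}^{UC}(\pi^{-1}(y),\{F_n\})\le a$ and the monotonicity of $h_{top}^{UC}(\cdot,\varepsilon)$ in $\varepsilon$; the work is to upgrade it to a tube and to make $\varepsilon$ and $N$ independent of $y$ and valid along the whole F{\o}lner set. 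I expect to handle this by using that $\pi$ is a closed map (so a finite Bowen cover of a fiber already covers the $\pi$-preimage of a neighborhood of the base point) together with the compactness of $Y$ and uniform continuity of the action to extract a single $\varepsilon$. Feeding the tube estimate into the counting inequality and letting $\delta\to0$ (with $\varepsilon=\varepsilon(\delta)\to0$ and $\eta=\eta(\delta)\to0$) yields the fibered estimate and completes the argument.
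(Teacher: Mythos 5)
Your proof of the left inequality is correct and in fact more direct than the paper's: you lift disjoint families of closed Bowen balls through $\pi$ (uniform continuity plus equivariance give $\pi(\overline{B}_{F}(x,\delta))\subseteq\overline{B}_{F}(\pi x,\varepsilon)$, and disjointness of the image balls forces disjointness of the lifted balls), so the packing pre-measures compare directly; the paper instead routes this through separated sets and Proposition \ref{packing-uc}(2). Your architecture for the right inequality also matches the paper's: reduce, via a countable cover of $\pi(E)$ and Proposition \ref{packing-uc}, to the fibered upper-capacity estimate $h^{UC}_{top}(E_j,\{F_n\})\le h^{UC}_{top}(\pi(E_j),\{F_n\})+a$, which is exactly the paper's Theorem \ref{thm-ineq-uc}. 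One repairable misstatement along the way: you quote Proposition \ref{packing-uc} as saying that packing entropy \emph{equals} $\inf\{\sup_j h^{UC}_{top}(W_j,\{F_n\})\}$ over countable covers, but the paper proves only the inequality $\le$ in that generality, plus a scale-dependent converse (part (2), relating scale $\varepsilon$ to $3\varepsilon$), and the remark following Proposition \ref{packing-uc} explicitly says the equality is unclear. This is fixable by running your cover argument at fixed scales, as the paper does.

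The genuine gap is the ``uniform tube estimate,'' which you correctly isolate as the crux but do not prove, and the route you sketch for it would fail. The hypothesis $h^{UC}_{top}(\pi^{-1}(y),\{F_n\})\le a$ is a $\limsup$ statement: it controls $s_{F_n}(\pi^{-1}(y),\delta)$ only for $n\ge N(y)$ with $N(y)$ depending on $y$, and the closed-map/compactness argument you invoke yields only that a fixed $(F_{m},\varepsilon)$-spanning set of the fiber $\pi^{-1}(y)$ controls a \emph{static} tube $\pi^{-1}(B(y,\gamma,\rho))$ for some $\gamma=\gamma(y,m)>0$. That is an estimate at one fixed scale $F_m$ over a static neighborhood, whereas your reduction needs an estimate at \emph{every} large scale $F_n$ over the \emph{dynamical} tube $\pi^{-1}(\overline{B}_{F_n}(y,\varepsilon))$, with $\varepsilon$ and $N$ independent of $y$; compactness of $Y$ alone cannot convert the former into the latter. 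The actual mechanism (Bowen's Theorem 17 in \cite{B1}, generalized in the paper) is to decompose the set $F_n$ into translates $F_{m(y_k)}a$ of finitely many \emph{fixed} F{\o}lner sets, chosen so that $ay$ lies well inside the neighborhood $W_{y_k}$ on which the fiber spanning set $E_{y_k}$ is effective, and then to span the tube by products of pulled-back copies of the $E_{y_k}$ over these pieces, together with a crude spanning set of $X$ on the uncovered remainder of $F_n$; the cardinality count $\exp\bigl((a+2\tau)|F_n|\bigr)$ closes only if the translates are nearly disjoint and cover all but a small fraction of $F_n$. For $G=\Z$ this decomposition of an interval is elementary, but for a general amenable group it is precisely the Ornstein--Weiss quasi-tiling problem, and the paper must invoke Lindenstrauss's covering lemma (Lemma \ref{lemma-4-1}); this is also the only place the temperedness hypothesis \eqref{tempered} on $\{F_n\}$ enters. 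Your proposal never uses temperedness at all, which is a reliable sign that the amenable-specific core of the proof has not been engaged.
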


We remark here that for $\Z$-actions, the inequalities was proved in \cite{ZCHZ}. But for amenable group actions, except employing Bowen's idea in \cite{B1} and the quasi-tiling techniques developed by Ornstein and Weiss \cite{OW}, we need a crucial covering lemma for amenable groups built by Lindenstrauss while proving pointwise theorems for amenable groups in \cite{L}.

\subsection{Amenable packing entropy for certain subsets}\

Let $M(X,G)$ and $E(X,G)$ be the collection of $G$-invariant and ergodic $G$-invariant Borel probability measures on $X$ respectively.
Since $G$ is amenable, $M(X,G)$ and $E(X,G)$ are both non-empty. For $\mu\in M(X,G)$, let $h_{\mu}(X,G)$ denote the measure-theoretic entropy
of $(X,G)$ w.r.t. $\mu$.

For $\mu\in M(X,G)$ and a F{\o}lner sequence $\{F_n\}$ in $G$, let $G_{\mu,\{F_n\}}$ be the set of generic points for $\mu$ (w.r.t. $\{F_n\}$),
which is defined by
\begin{align*}
  G_{\mu,\{F_n\}}=\{x\in X: \lim_{n\rightarrow+\infty}\frac{1}{|F_n|}\sum_{g\in F_n}f(gx)=\int_Xf d\mu, \text{ for any }f\in C(X)\}.
\end{align*}

For simplicity, we write $G_{\mu,\{F_n\}}$ by $G_{\mu}$ when there is no ambiguity on $\{F_n\}$. But we should note that for different F{\o}lner sequence $\{F_n\}$,
the corresponding $G_{\mu}$ may not coincide. When $\mu$ is ergodic and the F{\o}lner sequence $\{F_n\}$ is tempered,
$G_{\mu}$ has full measure for $\mu$ (see Remark \ref{remark-5-1}). But when $\mu$ is not ergodic, the set $G_{\mu}$ could be empty.

For the case $G=\Z$, Bowen \cite{B2} proved that the Bowen topological entropy of $G_{\mu}$ equals
the measure-theoretic entropy of $\mu$ if $\mu$ is ergodic. Pfister and Sullivan \cite{PS} extended Bowen's result to the system with the so called
$g$-almost product property for invariant Borel probability measure $\mu$. And the results for amenable group action version were proved by Zheng and Chen \cite{ZC2} and Zhang \cite{Zh} respectively. The $g$-almost product property (see \cite{PS}) is an extension of the specification property for $\Z$-systems
and was generalized to amenable systems in \cite{Zh} (called almost specification property there). It was shown in \cite{Zh} that weak specification
implies almost specification for amenable systems. We leave the detailed definitions of almost specification and weak specification in section 5.

We will prove for packing entropy the following theorem.
\begin{theorem}\label{th-generic}
Let $(X,G)$ be a $G-$action topological dynamical system with $G$ a countable infinite discrete amenable group and let $\mu\in M(X,G)$ and $\{F_n\}$ be a F{\o}lner sequence in $G$ satisfying the growth condition \eqref{eq-1-1}.
If either
$\mu$ is ergodic and $\{F_n\}$ is tempered
or $(X,G)$ satisfies the almost specification property,
then
\begin{align}\label{packing-generic}
  h^P_{top}(G_{\mu}, \{F_n\})=h_{\mu}(X,G).
\end{align}
\end{theorem}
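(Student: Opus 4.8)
The plan is to establish the two inequalities $h^P_{top}(G_{\mu},\{F_n\})\le h_{\mu}(X,G)$ and $h^P_{top}(G_{\mu},\{F_n\})\ge h_{\mu}(X,G)$ separately, splitting the lower bound according to the two hypotheses. For the upper bound, which I expect to hold in both cases, I would argue directly from the definition of $\mathcal P$ via a Katok-type counting estimate. Fix $\delta,\varepsilon>0$ and write $G_{\mu}=\bigcup_{N\ge1}G_{\mu}^{N,\delta}$, where $G_{\mu}^{N,\delta}$ is the set of $x\in G_{\mu}$ whose empirical measures $\frac1{|F_n|}\sum_{g\in F_n}\delta_{gx}$ stay within $\delta$ of $\mu$ for all $n\ge N$; since every generic point has empirical measures converging to $\mu$, this is a genuine countable decomposition. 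In any disjoint family $\{\overline B_{F_{n_i}}(x_i,\varepsilon)\}$ with $x_i\in G_{\mu}^{N,\delta}$ and $n_i\ge N'\ge N$, the centers sharing a common value $n_i=m$ form an $(F_m,2\varepsilon)$-separated set whose empirical measures are $\delta$-close to $\mu$, and a standard upper bound on the number of such $\mu$-generic separated orbits (the counting half of the amenable Shannon--McMillan/Katok estimate, valid for any F{\o}lner sequence) gives at most $\exp(|F_m|(h_{\mu}(X,G)+\gamma))$ of them, with $\gamma=\gamma(\varepsilon,\delta)\to0$. Hence for $s>h_{\mu}(X,G)+\gamma$ one has $P(G_{\mu}^{N,\delta},N',\varepsilon,s,\{F_n\})\le\sum_{m\ge N'}\exp\big(|F_m|(h_{\mu}(X,G)+\gamma-s)\big)$, and the growth condition \eqref{eq-1-1} forces $|F_m|\ge 2\log m/(s-h_{\mu}(X,G)-\gamma)$ eventually, so this tail is dominated by $\sum_m m^{-2}$ and tends to $0$ as $N'\to\infty$. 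Thus $\mathcal P(G_{\mu}^{N,\delta},\varepsilon,s,\{F_n\})=0$, and by countable stability $\mathcal P(G_{\mu},\varepsilon,s,\{F_n\})=0$; letting $s\downarrow h_{\mu}(X,G)+\gamma$ and then $\delta,\varepsilon\to0$ yields $h^P_{top}(G_{\mu},\{F_n\})\le h_{\mu}(X,G)$.

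For the lower bound when $\mu$ is ergodic and $\{F_n\}$ is tempered, I would simply feed $\mu$ itself into the variational principle, Theorem \ref{th-viriational-pack}. By the pointwise ergodic theorem for tempered F{\o}lner sequences we have $\mu(G_{\mu})=1$ (Remark \ref{remark-5-1}), so $\mu$ is an admissible test measure; and the Brin--Katok formula recalled after \eqref{eq-1-1} identifies the integrand defining $\overline h_{\mu}^{loc}$ with $h_{\mu}(X,G)$ for $\mu$-a.e.\ $x$, so that $\overline h_{\mu}^{loc}(G_{\mu},\{F_n\})=h_{\mu}(X,G)$. Theorem \ref{th-viriational-pack} then gives $h^P_{top}(G_{\mu},\{F_n\})\ge\overline h_{\mu}^{loc}(G_{\mu},\{F_n\})=h_{\mu}(X,G)$, completing this case.

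The substantial case, and the one I expect to contain the main obstacle, is the lower bound under the almost specification property, where $\mu$ may be non-ergodic and $G_{\mu}$ may be $\mu$-null, so that $\mu$ is useless as a test measure. Here the plan is to build, for each small $\eta>0$, a Moran-type Cantor subset $K\subseteq G_{\mu}$ carrying a measure $\nu$ with $\nu(K)=1$ and $\overline h_{\nu}^{loc}(K,\{F_n\})\ge h_{\mu}(X,G)-\eta$, and then apply Theorem \ref{th-viriational-pack} to conclude $h^P_{top}(G_{\mu},\{F_n\})\ge h^P_{top}(K,\{F_n\})\ge h_{\mu}(X,G)-\eta$. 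Following the amenable Pfister--Sullivan scheme, and using $h_{\mu}(X,G)=\lim_n\frac1{|F_n|}H_{\mu}(\mathcal P^{F_n})$, one selects at a rapidly growing sequence of scales $m_k$ collections of orbit blocks that are simultaneously $\delta_k$-generic for $\mu$ and of near-maximal cardinality $\exp(|F_{m_k}|(h_{\mu}(X,G)-\eta_k))$; almost specification concatenates any choice of one block per level into a genuine orbit, the set $K$ of all concatenations lies in $G_{\mu}$ once the shadowing errors and tiling gaps have vanishing density, and the uniform measure $\nu$ on the choices satisfies the local-entropy bound once its mass on Bowen balls is controlled from above. The hard part will be exactly this construction: realizing near-maximal block collections whose empirical measures approximate a non-ergodic $\mu$ (forcing one to blend blocks from distinct ergodic components in the correct proportions), keeping the density of the almost specification mistake sets and of the tiling gaps negligible so that the concatenations are truly generic and distinct choices remain $(F,\rho)$-separated, and --- most delicately --- showing that a single Bowen ball $B_{F_{L_N}}(x,\rho)$ cannot meet blocks from too many distinct cylinders, so that $\nu(B_{F_{L_N}}(x,\rho))\le\exp(-|F_{L_N}|(h_{\mu}(X,G)-\eta))$. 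Throughout this case the growth condition \eqref{eq-1-1} is precisely what absorbs the sub-exponential bookkeeping (the number of levels, the gap sizes and the block indices) into the entropy estimates.
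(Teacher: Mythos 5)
Your upper bound and your ergodic-case lower bound are both sound and essentially match the paper. The upper bound uses the same decomposition (your sets $G_{\mu}^{N,\delta}$ are, up to intersection with $G_{\mu}$, the paper's sets $R_{N,m}$) and the same external counting input (the claim on p.~878 of \cite{ZC2}, which is valid for non-ergodic invariant measures), with the growth condition \eqref{eq-1-1} absorbing the sum over scales exactly as in the proof of Proposition \ref{prop-packing-uc}; the ergodic case via Theorem \ref{th-viriational-pack} together with Brin--Katok and $\mu(G_{\mu})=1$ is a legitimate variant of the paper's route through Corollary \ref{coro-packing}.

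The genuine gap is the almost specification case. There you propose to rebuild the entire amenable Pfister--Sullivan/Zhang machinery --- Moran-type Cantor sets inside $G_{\mu}$, block selection blending the ergodic components of a non-ergodic $\mu$ in the right proportions, density control of mistake sets and tiling gaps, and an upper bound on the $\nu$-mass of Bowen balls --- and you explicitly leave every one of these steps unexecuted (``the hard part will be exactly this construction''). As written this is a plan, not a proof, and the deferred steps are precisely where all the difficulty of a quasi-tiling concatenation argument lives. The missing idea that makes this case short is Proposition \ref{prop-packing-Bowen}: packing entropy always dominates Bowen entropy, so any lower bound for $h^B_{top}(G_{\mu},\{F_n\})$ transfers instantly to $h^P_{top}(G_{\mu},\{F_n\})$. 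Zhang \cite{Zh} already proved $h^B_{top}(G_{\mu},\{F_n\})=h_{\mu}(X,G)$ under almost specification (for general, possibly non-ergodic, invariant $\mu$), so the paper's entire argument for this case is the reduction $h^P_{top}(G_{\mu},\{F_n\})\ge h^B_{top}(G_{\mu},\{F_n\})=h_{\mu}(X,G)$. Your construction, if carried out, would in effect reprove Zhang's theorem in a strengthened packing form --- unnecessary work, since for lower bounds the packing quantity is the weaker one to control.
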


In geometric measure theory, a set is said to be {\it regular} (or ``dimension-regular'') if it has equal Hausdorff and packing dimensions (\cite {T}). As a counterpart in dynamical systems, we have the following definition.
\begin{definition}\label{def-regular}
  A subset is said to be {\it regular} in the sense of dimensional entropy (or regular, for simplicity) if it has equal Bowen entropy and packing entropy.
\end{definition}

An affirmative task in the study of packing entropy is to compute the dimensional entropies (including packing entropy and its dual, Bowen entropy) for various subsets and to make clear which subsets are regular. From Theorem \ref{th-generic} and results in \cite{ZC2,Zh} for Bowen entropy, under the conditions in Theorem \ref{th-generic}, the set $G_{\mu}$ is regular since both of its dimensional entropies equal the measure-theoretic entropy for $\mu$.

{\it Organization of the paper}. In section 2, we will give some properties of amenable packing entropy including connections with Bowen entropy and upper
capacity entropy.
In section 3, we will devote to the proof of Theorem \ref{th-viriational-pack}, the variational principle between amenable packing entropy and upper local entropy. This extends
Feng and Huang's result in \cite{FH} from $\Z$-actions to amenable group actions. In section 4, we are going to prove the packing entropy inequalities for factor maps (Theorem \ref{thm2}). Finally in section 5, we will give the proof of Theorem \ref{th-generic} and provide some examples to discuss the regularity for certain subsets. These examples include subsets of symbolic dynamical system and fibers of the $(T,T^{-1})$ transformation. Some detailed computations via these examples are included in Appendix A.


\section{Properties of amenable packing entropy}
Due to the definition of packing entropy in section 1, it is not hard to prove that the packing entropy has the following properties.
\begin{proposition}\label{prop-basic}
Let $\{F_n\}$ be a sequence of finite subsets in $G$ with $|F_n|\rightarrow \infty$, $Z,Z'$ and $Z_i$ $(i=1,2,\cdots)$ be subsets of $X$.
  \begin{enumerate}
    \item If $Z\subseteq Z'$, then
    $$h_{top}^P(Z,\{F_n\})\le h_{top}^P(Z',\{F_n\}).$$
    \item If $Z\subseteq \bigcup\limits_{i=1}^{+\infty}Z_i$, then for any $\varepsilon>0$,
    $$h_{top}^P(Z,\varepsilon,\{F_n\})\le \sup_{i\ge 1}h_{top}^P(Z_i,\varepsilon,\{F_n\}).$$
    Hence
    $$h_{top}^P(Z,\{F_n\})\le \sup_{i\ge 1}h_{top}^P(Z_i,\{F_n\}).$$
    \item If $\{F_{n_k}\}$ is a subsequence of $\{F_n\}$, then
    $$h_{top}^P(Z,\{F_{n_k}\})\le h_{top}^P(Z,\{F_n\}).$$
  \end{enumerate}
\end{proposition}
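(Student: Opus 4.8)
The plan is to unwind the definitions in Section 1 and verify each of the three monotonicity/subadditivity properties directly, since the real content is organizing the set-function inequalities correctly rather than performing any deep estimate. For part (1), I would first observe that when $Z \subseteq Z'$, every finite or countable pairwise disjoint family of closed Bowen balls $\{\overline{B}_{F_{n_i}}(x_i,\varepsilon)\}$ with centers $x_i \in Z$ is also an admissible family for $Z'$ (its centers lie in $Z'$). Hence the supremum defining $P(Z,N,\varepsilon,s,\{F_n\})$ is taken over a smaller collection than the one defining $P(Z',N,\varepsilon,s,\{F_n\})$, giving $P(Z,N,\varepsilon,s,\{F_n\}) \le P(Z',N,\varepsilon,s,\{F_n\})$. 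Passing to the limit in $N$ preserves this, and then the infimum over countable covers (noting that any cover of $Z'$ restricts to a cover of $Z$) yields $\mathcal{P}(Z,\varepsilon,s,\{F_n\}) \le \mathcal{P}(Z',\varepsilon,s,\{F_n\})$. Comparing the critical values of $s$ at which these jump from $+\infty$ to $0$ gives $h_{top}^P(Z,\varepsilon,\{F_n\}) \le h_{top}^P(Z',\varepsilon,\{F_n\})$, and letting $\varepsilon \to 0$ finishes part (1).

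For part (2), the key is the subadditivity of $\mathcal{P}(\cdot,\varepsilon,s,\{F_n\})$ already recorded in the definition: if $Z \subseteq \bigcup_i Z_i$, then $\mathcal{P}(Z,\varepsilon,s,\{F_n\}) \le \sum_i \mathcal{P}(Z_i,\varepsilon,s,\{F_n\})$. I would argue by contradiction with the critical-value characterization. Set $s_0 = \sup_{i\ge 1} h_{top}^P(Z_i,\varepsilon,\{F_n\})$ and fix any $s > s_0$. Then $s > h_{top}^P(Z_i,\varepsilon,\{F_n\})$ for every $i$, so $\mathcal{P}(Z_i,\varepsilon,s,\{F_n\}) = 0$ for all $i$; by the subadditivity this forces $\mathcal{P}(Z,\varepsilon,s,\{F_n\}) = 0$, whence $h_{top}^P(Z,\varepsilon,\{F_n\}) \le s$. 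As $s > s_0$ was arbitrary, $h_{top}^P(Z,\varepsilon,\{F_n\}) \le s_0$, which is exactly the first displayed inequality. Taking $\varepsilon \to 0$ on both sides (the supremum over $i$ commutes with the limit because each $h_{top}^P(Z_i,\varepsilon,\{F_n\})$ is monotone in $\varepsilon$, so the limit of the supremum is bounded by the supremum of the limits) yields the second inequality and completes part (2).

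For part (3), suppose $\{F_{n_k}\}$ is a subsequence of $\{F_n\}$. The essential observation is that any admissible packing family for the subsequence $\{F_{n_k}\}$, whose balls have the form $\overline{B}_{F_{n_{k_i}}}(x_i,\varepsilon)$ with indices $n_{k_i} \ge N$, is automatically an admissible family for the full sequence $\{F_n\}$, since the index $n_{k_i}$ is itself a legitimate index $\ge N$ in the original sequence. Consequently the supremum defining $P(Z,N,\varepsilon,s,\{F_{n_k}\})$ ranges over a subcollection of the families allowed for $P(Z,N,\varepsilon,s,\{F_n\})$, giving $P(Z,N,\varepsilon,s,\{F_{n_k}\}) \le P(Z,N,\varepsilon,s,\{F_n\})$. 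Propagating this through the limit in $N$, the infimum over covers, the critical value of $s$, and finally the limit $\varepsilon \to 0$ (each step being monotone exactly as in part (1)) delivers $h_{top}^P(Z,\{F_{n_k}\}) \le h_{top}^P(Z,\{F_n\})$.

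I do not anticipate a genuine obstacle here, as all three parts are formal consequences of how suprema, infima, and the critical-value operator interact with inclusions. The only point requiring mild care is the interchange of limits in part (2): one must verify that the supremum over the index $i$ and the limit $\varepsilon \to 0$ can be taken in either order, which follows from the monotonicity of $\varepsilon \mapsto h_{top}^P(Z_i,\varepsilon,\{F_n\})$ noted in the definition. Everything else is bookkeeping with the nested definitions.
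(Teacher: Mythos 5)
Your proof is correct, and it is exactly the routine definitional verification the paper has in mind when it omits the proof with the remark that the properties follow ``due to the definition of packing entropy'': monotonicity of the packing pre-measures under inclusion, the subadditivity of $\mathcal{P}$ recorded in Definition 1.1 combined with the critical-value characterization, and the observation that packing families for a subsequence are admissible for the full sequence (here one should note $n_k\ge k$, so the index constraint $\ge N$ is preserved). No gaps; the argument matches the intended one.
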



In the following we recall the definition of amenable Bowen topological entropy which was introduced in \cite{ZC}.

Let $\{F_n\}$ be a sequence of finite subsets in $G$ with $|F_n|\rightarrow \infty$. For $Z\subseteq X, s\ge0, N\in\N$ and $\varepsilon> 0$, define
$$\mathcal{M}(Z,N,\varepsilon,s,\{F_n\}) = \inf\sum_i\exp(-s|F_{n_i}|),$$
where the infimum is taken over all finite or countable families $\{B_{F_{n_i}}(x_i,\varepsilon)\}$ such that
$x_i\in X, n_i\ge N$ and $\bigcup\limits_i B_{F_{n_i}}(x_i,\varepsilon)\supseteq Z$. The quantity $\mathcal{M}(Z,N,\varepsilon,s,\{F_n\})$
does not decrease as $N$ increases and $\varepsilon$ decreases, hence the following limits exist:
$$\mathcal{M}(Z,\varepsilon,s,\{F_n\})=\lim_{N\rightarrow+\infty}\mathcal{M}(Z,N,\varepsilon,s,\{F_n\})$$
and
$$\mathcal{M}(Z,s,\{F_n\})=\lim_{\varepsilon\rightarrow0}\mathcal{M}(Z,\varepsilon,s,\{F_n\}).$$
The {\it Bowen topological entropy} $h^B_{top}(Z,\{F_n\})$ is then defined as the critical value
of the parameter $s$, where $\mathcal{M}(Z,s,\{F_n\})$ jumps from $+\infty$ to $0$, i.e.,
\begin{equation*}
\mathcal{M}(Z,s,\{F_n\})=\begin{cases}
                             0, \quad \ s > h^B_{top}(Z,\{F_n\}),\\
                             +\infty, s < h^B_{top}(Z,\{F_n\}).
                        \end{cases}
\end{equation*}

Next we will compare packing topological entropy with Bowen topological entropy.

\begin{proposition}\label{prop-packing-Bowen}
Let $\{F_n\}$ be a sequence of finite subsets in $G$ with $|F_n|\rightarrow \infty$. For any $Z\subseteq X$,
$$h_{top}^B(Z,\{F_n\})\le h_{top}^P(Z,\{F_n\}).$$
\end{proposition}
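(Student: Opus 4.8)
The plan is to establish the inequality at the level of the underlying contents and only then pass to the critical exponents, exactly as one proves $\dim_H\le\dim_P$ from $\mathcal{H}^s\le\mathcal{P}^s$ in geometric measure theory. Concretely, I would fix $\varepsilon>0$ and $s\ge 0$ and aim to prove
\[
  \mathcal{P}(Z,\varepsilon,s,\{F_n\})\ \ge\ \mathcal{M}(Z,3\varepsilon,s,\{F_n\}).
\]
Granting this, the conclusion is immediate: if $s<h_{top}^B(Z,3\varepsilon,\{F_n\})$ then $\mathcal{M}(Z,3\varepsilon,s,\{F_n\})=+\infty$, whence $\mathcal{P}(Z,\varepsilon,s,\{F_n\})=+\infty$ and therefore $s\le h_{top}^P(Z,\varepsilon,\{F_n\})$. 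Letting $\varepsilon\to 0$ (so that $3\varepsilon\to 0$ as well) gives $h_{top}^B(Z,\{F_n\})\le h_{top}^P(Z,\{F_n\})$.

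The core step is a one-scale, single-time comparison for an arbitrary set $E\subseteq X$ and an arbitrary $N\in\N$. Among all closed Bowen balls $\overline{B}_{F_N}(x,\varepsilon)$ with $x\in E$ that use the \emph{single} time $F_N$, I would choose a maximal pairwise disjoint subfamily $\{\overline{B}_{F_N}(x_i,\varepsilon)\}_{i=1}^{M}$. This family is finite: disjoint closed $d_{F_N}$-balls of radius $\varepsilon$ have $d_{F_N}$-separated centers, and the images of the centers under $x\mapsto(gx)_{g\in F_N}$ are then $\varepsilon$-separated inside the compact set $\{(gx)_{g\in F_N}:x\in X\}\subseteq X^{F_N}$. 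This one family does two jobs at once. On one hand it is an admissible packing of $E$ (closed balls, centers in $E$, all times equal to $N\ge N$), so $P(E,N,\varepsilon,s,\{F_n\})\ge Me^{-s|F_N|}$. On the other hand, by maximality every $y\in E$ has $\overline{B}_{F_N}(y,\varepsilon)$ meeting some $\overline{B}_{F_N}(x_i,\varepsilon)$; since both balls are measured in the \emph{same} pseudometric $d_{F_N}$, the triangle inequality gives $y\in\overline{B}_{F_N}(x_i,2\varepsilon)\subseteq B_{F_N}(x_i,3\varepsilon)$. Hence $\{B_{F_N}(x_i,3\varepsilon)\}$ is an admissible cover of $E$ and $\mathcal{M}(E,N,3\varepsilon,s,\{F_n\})\le Me^{-s|F_N|}$. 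Combining the two bounds yields $\mathcal{M}(E,N,3\varepsilon,s,\{F_n\})\le P(E,N,\varepsilon,s,\{F_n\})$, and letting $N\to\infty$ (both sides being monotone in $N$) gives $\mathcal{M}(E,3\varepsilon,s,\{F_n\})\le P(E,\varepsilon,s,\{F_n\})$.

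I expect the main obstacle to be precisely the reason a naive Vitali/$5r$ argument fails here: for a general sequence $\{F_n\}$ the Bowen balls of different times are measured in different and non-nested pseudometrics $d_{F_n}$, so intersecting balls of \emph{unequal} times need not be comparable by any triangle inequality. Restricting the maximal family to the single common time $F_N$ is what restores the triangle inequality and circumvents this issue, at the harmless cost of enlarging the covering radius from $\varepsilon$ to $3\varepsilon$ (which disappears in the limit $\varepsilon\to0$). Finally I would lift the one-scale inequality to the outer-measure level: the Bowen content $\mathcal{M}(\cdot,3\varepsilon,s,\{F_n\})$ is countably subadditive, since a union of covers of the pieces is a cover of the whole, so for any countable decomposition $Z\subseteq\bigcup_i Z_i$,
\[
  \sum_i P(Z_i,\varepsilon,s,\{F_n\})\ \ge\ \sum_i\mathcal{M}(Z_i,3\varepsilon,s,\{F_n\})\ \ge\ \mathcal{M}(Z,3\varepsilon,s,\{F_n\}).
\]
Taking the infimum over all such decompositions, which is exactly the definition of $\mathcal{P}(Z,\varepsilon,s,\{F_n\})$, yields $\mathcal{P}(Z,\varepsilon,s,\{F_n\})\ge\mathcal{M}(Z,3\varepsilon,s,\{F_n\})$ and completes the plan. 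The only remaining delicate points are the existence and finiteness of the maximal single-time family and the bookkeeping of open versus closed balls under the radius enlargement, both of which are routine.
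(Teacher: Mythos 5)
Your proposal is correct and follows essentially the same route as the paper's own proof: a maximal pairwise disjoint family of closed Bowen balls $\overline{B}_{F_N}(x_i,\varepsilon)$ at a single time $F_N$ serves simultaneously as an admissible packing and, after tripling the radius, as an admissible open cover, giving $\mathcal{M}(\cdot,N,3\varepsilon,s,\{F_n\})\le P(\cdot,N,\varepsilon,s,\{F_n\})$, which is then lifted to $\mathcal{M}(Z,3\varepsilon,s,\{F_n\})\le\mathcal{P}(Z,\varepsilon,s,\{F_n\})$ by countable subadditivity of $\mathcal{M}$ and the infimum over countable decompositions, and finally converted into the entropy inequality by letting $\varepsilon\to 0$. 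The paper argues in exactly this way (fixing the cover $\{Z_i\}$ first and then applying the single-time maximal-family comparison to each piece), so the differences are purely cosmetic bookkeeping, such as your explicit compactness argument for finiteness of the maximal family and your phrasing of the last step through the $\varepsilon$-level critical exponents.
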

\begin{proof}
The proof for the case $h_{top}^B(Z,\{F_n\})=0$ is obvious. Now assume $h^B_{top}(Z,\{F_n\})>0$ and let $0< s <h^B_{top}(Z,\{F_n\})$.

Let $\{Z_i\}_{i=1}^{+\infty}$ be any covering of $Z$. For each $i$, for any $n\in \N$ and $\varepsilon>0$,
let $\{\overline B_{F_n}(x_{i,j}, \varepsilon)\}_{j=1}^{N_i}$ be a disjoint subfamily of $\{\overline B_{F_n}(x, \varepsilon)\}_{x\in Z_i}$
with maximal cardinality $N_i$. Then
$$\bigcup_{i=1}^{N_i}B_{F_n}(x_{i,j},3\varepsilon)\supseteq Z_i.$$
So
$$\mathcal M(Z_i,n,3\varepsilon,s,\{F_n\})\le N_ie^{-|F_n|s}\le P(Z_i,n,\varepsilon,s,\{F_n\}),$$
and hence
$$\mathcal M(Z_i,3\varepsilon,s,\{F_n\})\le P(Z_i,\varepsilon,s,\{F_n\}).$$
Thus
\begin{align*}
  \mathcal M(Z,3\varepsilon,s,\{F_n\})\le \sum_{i=1}^{+\infty}M(Z_i,3\varepsilon,s,\{F_n\})
  \le \sum_{i=1}^{+\infty} P(Z_i,\varepsilon,s,\{F_n\}),
\end{align*}
which deduces that
$$\mathcal M(Z,3\varepsilon,s,\{F_n\})\le \mathcal P(Z,\varepsilon,s,\{F_n\}).$$

Since $s<h^B_{top}(Z,\{F_n\})$, we have $\mathcal M(Z,s,\{F_n\})=+\infty$.
So
$$1\le \mathcal M(Z,3\varepsilon,s,\{F_n\})\le\mathcal P(Z,\varepsilon,s,\{F_n\})$$
whenever $\varepsilon$ is sufficiently small.
This implies that
$$h_{top}^P(Z,\varepsilon,\{F_n\})\ge s.$$
Letting $\varepsilon\rightarrow 0$, we have $h_{top}^P(Z,\{F_n\})\ge s$.
Hence
$$h_{top}^B(Z,\{F_n\})\le h_{top}^P(Z,\{F_n\}).$$
\end{proof}

\begin{corollary}\label{coro-packing}
Let $\mu\in M(X,G)$, $Z\subseteq X$ with $\mu(Z)=1$ and $\{F_n\}$ be a F{\o}lner sequence in $G$, then
$$h_{\mu}(X,G)\le h_{top}^P(Z,\{F_n\}).$$
\end{corollary}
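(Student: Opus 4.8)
The plan is to reduce the assertion to the analogous lower bound for Bowen entropy and then to feed in the Brin--Katok formula through a mass-distribution argument. By Proposition \ref{prop-packing-Bowen} we have $h^B_{top}(Z,\{F_n\})\le h^P_{top}(Z,\{F_n\})$, so it suffices to prove
$$h_\mu(X,G)\le h^B_{top}(Z,\{F_n\}).$$
Since $\mu\in M(X,G)$ is $G$-invariant, the Brin--Katok entropy formula recalled above gives, for $\mu$-a.e. $x$,
$$\lim_{\varepsilon\to 0}\liminf_{n\to\infty}-\frac{1}{|F_n|}\log\mu\big(B_{F_n}(x,\varepsilon)\big)=h_\mu(X,G).$$
Fix any $0<s<h_\mu(X,G)$; I would show $\mathcal M(Z,s,\{F_n\})>0$, which by the definition of the critical value forces $s\le h^B_{top}(Z,\{F_n\})$, and then let $s\uparrow h_\mu(X,G)$.

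For each $N\in\N$ and $\varepsilon>0$ introduce the level set
$$A_{N,\varepsilon}=\set{x\in X:\ \mu\big(B_{F_n}(x,\varepsilon)\big)\le e^{-s|F_n|}\ \text{for all}\ n\ge N}.$$
Because the pointwise limit above equals $h_\mu(X,G)>s$ almost everywhere, the sets $\bigcup_N A_{N,\varepsilon}$ increase to a set of full measure as $\varepsilon\to0$, so for every sufficiently small $\varepsilon$ one can pick $N=N(\varepsilon)$ (before choosing any cover) with $\mu(A_{N,\varepsilon})>\tfrac12$, and hence $\mu(Z\cap A_{N,\varepsilon})>\tfrac12$ as $\mu(Z)=1$. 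Now let $\set{B_{F_{n_i}}(y_i,\varepsilon/2)}_i$ with $n_i\ge N$ be an arbitrary cover of $Z$. If such a ball meets $A_{N,\varepsilon}$, choose $x$ in the intersection; the triangle inequality for $d_{F_{n_i}}$ gives $B_{F_{n_i}}(y_i,\varepsilon/2)\subseteq B_{F_{n_i}}(x,\varepsilon)$, so $\mu\big(B_{F_{n_i}}(y_i,\varepsilon/2)\big)\le e^{-s|F_{n_i}|}$ since $n_i\ge N$. As the cover in particular covers $Z\cap A_{N,\varepsilon}$, summing over the balls that meet $A_{N,\varepsilon}$ yields
$$\tfrac12<\mu(Z\cap A_{N,\varepsilon})\le\sum_i\mu\big(B_{F_{n_i}}(y_i,\varepsilon/2)\big)\le\sum_i e^{-s|F_{n_i}|}.$$
Taking the infimum over all such covers gives $\mathcal M(Z,N,\varepsilon/2,s,\{F_n\})\ge\tfrac12$, whence $\mathcal M(Z,\varepsilon/2,s,\{F_n\})\ge\tfrac12$ for all small $\varepsilon$ and therefore $\mathcal M(Z,s,\{F_n\})\ge\tfrac12>0$, as required.

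The main obstacle is this covering step: the definition of $\mathcal M$ allows the centres $y_i$ to be arbitrary points of $X$, whereas the exponential measure bound is available only at points of the level set $A_{N,\varepsilon}$, and the radius-doubling inclusion $B_{F_{n_i}}(y_i,\varepsilon/2)\subseteq B_{F_{n_i}}(x,\varepsilon)$ is precisely what bridges this gap, at the cost of comparing the two radii $\varepsilon/2$ and $\varepsilon$. One caveat deserves mention: the Brin--Katok formula as stated requires $\{F_n\}$ to be tempered and to satisfy \eqref{eq-1-1}. To obtain the corollary for a general F{\o}lner sequence I would instead route through the variational principle for amenable Bowen entropy via the lower local entropy from \cite{ZC}, which yields $\underline h_\mu^{loc}(Z,\{F_n\})\le h^B_{top}(Z,\{F_n\})$, and then identify $\underline h_\mu^{loc}(Z,\{F_n\})$ with $h_\mu(X,G)$.
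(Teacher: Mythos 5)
Your mass-distribution argument is sound as far as it goes, but it proves the corollary only under hypotheses the corollary does not assume, and your proposed repair for the general case is circular. The core of your proof --- the level sets $A_{N,\varepsilon}$, the radius-doubling inclusion $B_{F_{n_i}}(y_i,\varepsilon/2)\subseteq B_{F_{n_i}}(x,\varepsilon)$, and the conclusion $\mathcal{M}(Z,s,\{F_n\})\ge\tfrac12$ --- is correct, but it is fed by the Brin--Katok formula, which (as you yourself note) is available only when $\{F_n\}$ is tempered and satisfies \eqref{eq-1-1}. Your fallback for an arbitrary F{\o}lner sequence is to use $\underline h_{\mu}^{loc}(Z,\{F_n\})\le h_{top}^B(Z,\{F_n\})$ and ``then identify $\underline h_{\mu}^{loc}(Z,\{F_n\})$ with $h_{\mu}(X,G)$''; but that identification \emph{is} the Brin--Katok formula, so for a non-tempered F{\o}lner sequence (or one violating \eqref{eq-1-1}) it is exactly as unavailable as before --- without temperedness even the pointwise ergodic theorem can fail. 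The caveat you flag is thus not resolved; it is re-encountered.

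Moreover, the gap cannot be closed along your route, i.e.\ by first establishing $h_{\mu}(X,G)\le h_{top}^B(Z,\{F_n\})$ for the given arbitrary F{\o}lner sequence and passing to a good subsequence there: restricting to a subsequence shrinks the class of admissible covers, so Bowen entropy can only \emph{increase} under passing to a subsequence, and a Bowen-entropy lower bound proved for a subsequence does not descend to the full sequence. The paper's proof exploits precisely the opposite monotonicity enjoyed by packing entropy. It chooses a subsequence $\{F_{n_k}\}$ of $\{F_n\}$ that is tempered and satisfies \eqref{eq-1-1} (every F{\o}lner sequence admits one, by \cite{L} and since $|F_n|\to\infty$); it then invokes Theorem 1.2 of \cite{ZC2} --- essentially your mass-distribution argument, run for that subsequence --- to get $h_{\mu}(X,G)\le h_{top}^B(Z,\{F_{n_k}\})$, applies Proposition \ref{prop-packing-Bowen} to obtain $h_{\mu}(X,G)\le h_{top}^P(Z,\{F_{n_k}\})$, and finally uses Proposition \ref{prop-basic}(3), which says packing entropy can only \emph{decrease} under passing to a subsequence, to conclude $h_{top}^P(Z,\{F_{n_k}\})\le h_{top}^P(Z,\{F_n\})$. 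In short, the reduction to a tempered subsequence must be performed at the packing-entropy level, not the Bowen-entropy level; this is the step your proposal is missing, and without it your proof only covers tempered F{\o}lner sequences satisfying \eqref{eq-1-1}.
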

\begin{proof}
Let $\{F_{n_k}\}$ be a subsequence of $\{F_n\}$ which is tempered and satisfies the growth condition \eqref{eq-1-1}.
By Theorem 1.2 of \cite{ZC2}, $h_{\mu}(X,G)\le h_{top}^B(Z,\{F_{n_k}\})$. Together with Proposition \ref{prop-packing-Bowen} and (3) of Proposition \ref{prop-basic},
we have
$$h_{\mu}(X,G)\le h_{top}^P(Z,\{F_{n_k}\})\le h_{top}^P(Z,\{F_{n}\}).$$
\end{proof}


Let $\varepsilon> 0$, $Z\subseteq X$ and $F\in F(G)$. A subset $E\subseteq Z$ is said to be an {\it $(F,\varepsilon)$-separated set} of $Z$, if
for any two distinct points $x, y\in E$, $d_F(x, y)>\varepsilon$. Let $s_F(Z, \varepsilon)$ denote the largest cardinality of $(F, \varepsilon)$-separated sets for $Z$. A subset $E\subseteq X$ is said to be an {\it $(F,\varepsilon)$-spanning set} of $Z$, if for any $x\in Z$, there exists $y\in E$
with $d_{F}(x,y)\le\varepsilon$. Let $r_F(Z, \varepsilon)$ (sometimes we use $r_F(Z, \varepsilon,d)$ to indicate the accompanied metric $d$) denote the smallest cardinality of $(F, \varepsilon)$-spanning sets for $Z$. For a sequence of finite subsets $\{F_n\}$ in $G$ with $|F_n|\rightarrow \infty$,
the {\it upper capacity topological entropy} of $Z$ is defined as
\begin{align*}
  h_{top}^{UC}(Z,\{F_n\})&=\lim_{\varepsilon\rightarrow 0}\limsup_{n\rightarrow +\infty}\frac{1}{|F_n|}\log s_{F_n}(Z, \varepsilon)=\lim_{\varepsilon\rightarrow 0}\limsup_{n\rightarrow +\infty}\frac{1}{|F_n|}\log r_{F_n}(Z, \varepsilon).
\end{align*}
The second equality comes from the following simple fact:
\begin{align}\label{sep-span}
  r_{F_n}(Z, 2\varepsilon)\le s_{F_n}(Z, 2\varepsilon) \le r_{F_n}(Z, \varepsilon).
\end{align}
For convention we denote by
$$h_{top}^{UC}(Z,\varepsilon,\{F_n\})=\limsup_{n\rightarrow +\infty}\frac{1}{|F_n|}\log s_{F_n}(Z, \varepsilon),$$
and then $$h_{top}^{UC}(Z,\{F_n\})=\lim_{\varepsilon\rightarrow 0}h_{top}^{UC}(Z,\varepsilon,\{F_n\}).$$
We note here that for the case $Z=X$ and $\{F_n\}$ is a F{\o}lner sequence, the quantity $h_{top}^{UC}(X,\{F_n\})$ coincides with $h_{top}(X,G)$, the topological entropy of $(X,G)$.
\begin{proposition}\label{prop-packing-uc}
Let $\{F_n\}$ be a sequence of finite subsets in $G$ satisfying the growth condition \eqref{eq-1-1}, then for any subset $Z$ of $X$ and any $\varepsilon>0$,
$$h_{top}^P(Z,\varepsilon,\{F_n\})\le h_{top}^{UC}(Z,\varepsilon,\{F_n\}).$$
Hence
$$h_{top}^P(Z,\{F_n\})\le h_{top}^{UC}(Z,\{F_n\}).$$
\end{proposition}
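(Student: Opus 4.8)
The plan is to mirror the classical fact that packing dimension is dominated by upper box dimension, carrying out the comparison at a fixed scale $\varepsilon$ and then letting $\varepsilon\to 0$. Everything reduces to producing, for each $s$ exceeding $h_{top}^{UC}(Z,\varepsilon,\{F_n\})$, a uniform upper bound on the packing pre-measure $P(Z,N,\varepsilon,s,\{F_n\})$ that forces it to vanish as $N\to\infty$.

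First I would record the elementary combinatorial link between disjoint closed Bowen balls and separated sets: if $\{\overline B_{F_n}(x_i,\varepsilon)\}_i$ is a pairwise disjoint family with all $x_i\in Z$ at the \emph{same} level $n$, then no two centers can satisfy $d_{F_n}(x_i,x_j)\le\varepsilon$ (otherwise $x_j$ would lie in both balls), so the centers form an $(F_n,\varepsilon)$-separated subset of $Z$ and the family has at most $s_{F_n}(Z,\varepsilon)$ members. Next, fix $s>a:=h_{top}^{UC}(Z,\varepsilon,\{F_n\})$ (assuming $a<\infty$, otherwise the inequality is trivial) and choose $s'$ with $a<s'<s$. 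By the definition of the limsup there is $N_0$ with $s_{F_n}(Z,\varepsilon)<e^{s'|F_n|}$ for all $n\ge N_0$. Given any admissible disjoint family with levels $n_i\ge N\ge N_0$, I group its members by level, writing $I_n$ for the indices with $n_i=n$; the previous step gives $|I_n|\le s_{F_n}(Z,\varepsilon)<e^{s'|F_n|}$, so
$$\sum_i e^{-s|F_{n_i}|}=\sum_{n\ge N}|I_n|\,e^{-s|F_n|}\le\sum_{n\ge N}e^{-(s-s')|F_n|}.$$
Taking the supremum over all such families yields $P(Z,N,\varepsilon,s,\{F_n\})\le\sum_{n\ge N}e^{-(s-s')|F_n|}$, a bound independent of the chosen packing.

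The main point — and the only place the growth condition \eqref{eq-1-1} enters — is that this tail series vanishes. Since $|F_n|/\log n\to\infty$, for large $n$ we have $|F_n|>\frac{2}{s-s'}\log n$, whence $e^{-(s-s')|F_n|}<n^{-2}$; thus $\sum_n e^{-(s-s')|F_n|}$ converges and its tail tends to $0$ as $N\to\infty$. Consequently $P(Z,\varepsilon,s,\{F_n\})=\lim_{N\to\infty}P(Z,N,\varepsilon,s,\{F_n\})=0$, and taking the trivial cover $Z_1=Z$ gives $\mathcal P(Z,\varepsilon,s,\{F_n\})=0$. Hence $h_{top}^P(Z,\varepsilon,\{F_n\})\le s$; letting $s\downarrow a$ proves the first inequality, and letting $\varepsilon\to 0$ yields the second.

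I expect the only delicate point to be the bookkeeping that the per-level separated-set bound survives when balls appear at infinitely many distinct levels $n_i$, which the grouping by $I_n$ handles cleanly, together with verifying that the growth condition is \emph{exactly} what guarantees summability of the tail. Without it, a slowly growing $\{F_n\}$ could make $\sum_n e^{-(s-s')|F_n|}$ diverge and the estimate would break down — so I would be careful to invoke \eqref{eq-1-1} explicitly rather than treating the summability as automatic.
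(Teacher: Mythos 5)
Your proof is correct and uses essentially the same ingredients as the paper's: the observation that centers of pairwise disjoint closed Bowen balls at a common level $n$ form an $(F_n,\varepsilon)$-separated set, a two-parameter gap ($s'<s$ versus the paper's $t<s$), and the growth condition \eqref{eq-1-1} to make $\sum_n e^{-(s-s')|F_n|}$ summable. The only difference is organizational: you argue directly that $s>h_{top}^{UC}(Z,\varepsilon,\{F_n\})$ forces $P(Z,\varepsilon,s,\{F_n\})=0$, whereas the paper runs the contrapositive, showing that $s<h_{top}^{P}(Z,\varepsilon,\{F_n\})$ forces arbitrarily large separated sets and hence $h_{top}^{UC}(Z,\varepsilon,\{F_n\})\ge t$ for every $t<s$.
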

\begin{proof}
Let $\varepsilon>0$ be fixed. The proposition is obvious for the case $h_{top}^P(Z,\varepsilon,\{F_n\})=0$.
Assume $h_{top}^P(Z,\varepsilon, \{F_n\})>0$ and let $0<t<s<h_{top}^P(Z,\varepsilon,\{F_n\})$. Then
$$P(Z,\varepsilon,s,\{F_n\})\ge \mathcal P(Z,\varepsilon,s,\{F_n\})=+\infty.$$
Thus for any $N$, there exists a countable pairwise disjoint family $\{\overline{B}_{F_{n_i}}(x_i, \varepsilon)\}$ with $x_i\in Z$ and $n_i\ge N$ for all $i$ such that
$1< \sum\limits_ie^{-|F_{n_i}|s}$. For each $k$, let $m_k$ be the number of $i$ with $n_i=k$. Then we have
$$\sum_ie^{-|F_{n_i}|s}=\sum_{k\ge N}m_ke^{-|F_k|s}.$$
Since $\{F_n\}$ satisfies the growth condition $\lim\limits_{n\rightarrow+\infty}\frac{|F_n|}{\log n}=+\infty$, $\sum\limits_{k\ge 1}e^{|F_k|(t-s)}$ converges.
Let $M=\sum\limits_{k\ge 1}e^{|F_k|(t-s)}$. There must be some $k\ge N$ such that $m_k>\frac{1}{M}e^{|F_k|t}$, otherwise the above sum is at most
$$\sum_{k\ge 1}\frac{1}{M}e^{|F_k|t}e^{-|F_k|s}=1.$$
So $s_{F_k}(Z, \varepsilon)\ge m_k > \frac{1}{M}e^{|F_k|t}$ and hence
$$h_{top}^{UC}(Z,\varepsilon,\{F_n\})=\limsup_{k\rightarrow +\infty}\frac{1}{|F_k|}\log s_{F_k}(Z, \varepsilon)\ge t,$$
which deduces that $h_{top}^P(Z,\varepsilon,\{F_n\})\le h_{top}^{UC}(Z,\varepsilon,\{F_n\})$.
\end{proof}
As a corollary, we have
\begin{corollary}\label{prop-packing-equal}
If $\{F_n\}$ is a F{\o}lner sequence that satisfies the growth condition \eqref{eq-1-1}, then
$$h_{top}^P(X,\{F_n\})=h_{top}(X,G).$$
\end{corollary}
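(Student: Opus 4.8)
The plan is to derive the equality from the comparison results already established in this section, sandwiching $h_{top}^P(X,\{F_n\})$ between $h_{top}(X,G)$ on both sides. First I would prove the upper bound $h_{top}^P(X,\{F_n\})\le h_{top}(X,G)$. Since $\{F_n\}$ satisfies the growth condition \eqref{eq-1-1}, Proposition \ref{prop-packing-uc} applies with $Z=X$ and gives
$$h_{top}^P(X,\{F_n\})\le h_{top}^{UC}(X,\{F_n\}).$$
Because $\{F_n\}$ is a F{\o}lner sequence, the remark preceding Proposition \ref{prop-packing-uc} identifies $h_{top}^{UC}(X,\{F_n\})$ with the topological entropy $h_{top}(X,G)$, and the upper bound follows at once.

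For the lower bound $h_{top}^P(X,\{F_n\})\ge h_{top}(X,G)$, I would invoke Corollary \ref{coro-packing} with $Z=X$: every invariant measure $\mu\in M(X,G)$ trivially satisfies $\mu(X)=1$, so
$$h_{\mu}(X,G)\le h_{top}^P(X,\{F_n\})\quad\text{for all }\mu\in M(X,G).$$
Taking the supremum over $\mu\in M(X,G)$ and applying the variational principle for the topological entropy of amenable group actions, $h_{top}(X,G)=\sup_{\mu\in M(X,G)}h_{\mu}(X,G)$, yields the desired lower bound. Combining the two inequalities gives $h_{top}^P(X,\{F_n\})=h_{top}(X,G)$.

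I do not expect any genuine obstacle here, since this is a direct consequence of the machinery assembled above; the only input external to this section is the classical amenable variational principle, which is standard (and also guarantees that $M(X,G)$ is nonempty, so the supremum in the lower bound is taken over a nonempty set). The one point worth checking carefully is that the F{\o}lner hypothesis is used exactly where it is needed: in the identification $h_{top}^{UC}(X,\{F_n\})=h_{top}(X,G)$ for the upper bound, and through Corollary \ref{coro-packing} for the lower bound, where passing to a tempered subsequence satisfying \eqref{eq-1-1} is handled internally.
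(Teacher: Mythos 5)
Your proof is correct and is essentially identical to the paper's own argument: the lower bound comes from Corollary \ref{coro-packing} combined with the amenable variational principle, and the upper bound from Proposition \ref{prop-packing-uc} together with the identification $h_{top}^{UC}(X,\{F_n\})=h_{top}(X,G)$ for a F{\o}lner sequence. Your remarks about where the F{\o}lner hypothesis enters (including the tempered subsequence being handled inside Corollary \ref{coro-packing}) match the paper's use of these ingredients exactly.
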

\begin{proof}
By Corollary \ref{coro-packing}, for any $\mu\in M(X,G)$, $h_{\mu}(X,G)\le h_{top}^P(X,\{F_n\})$. Applying the variational principle for amenable topological entropy (cf. \cite{OP,ST}),
we have $h_{top}^P(X,\{F_n\})\ge h_{top}(X,G)$. By Proposition \ref{prop-packing-uc}, we have $h_{top}^P(X,\{F_n\})\le h_{top}(X,G)$.
\end{proof}
\begin{remark}\label{remark-2.6}
  \begin{enumerate}
    \item By Theorem 1.1 of \cite{DZ} (see also \cite{ZC}), if $\{F_n\}$ is a tempered F{\o}lner sequence and satisfies the growth condition \eqref{eq-1-1}, then
    $$h_{top}^P(X,\{F_n\})= h_{top}(X,G)=h_{top}^B(X,\{F_n\}).$$
    \item If we replace $X$ by a $G$-invariant compact subset, the above equality also holds and hence any $G$-invariant compact subset is regular (when the F{\o}lner sequence $\{F_n\}$ is tempered and satisfies the growth condition \eqref{eq-1-1}).
  \end{enumerate}
\end{remark}
At the end of this section, we will give further relations between packing entropy and upper capacity topological entropy.
\begin{proposition}\label{packing-uc}
  Let $\varepsilon>0$, $Z$ be a subset of $X$ and let $\{F_n\}$ be a sequence of finite subsets in $G$ satisfying the growth condition \eqref{eq-1-1}.
  \begin{enumerate}
    \item
    \begin{align*}
    h^P_{top}(Z,\varepsilon,\{F_n\})\le \inf\{\sup_{i\ge 1} h^{UC}_{top}(Z_i,\varepsilon,\{F_n\}): Z=\cup_{i=1}^{\infty}Z_i\}.
  \end{align*}
  Hence
    \begin{align*}
    h^P_{top}(Z,\{F_n\})\le\inf\{\sup_{i\ge 1} h^{UC}_{top}(Z_i,\{F_n\}): Z=\cup_{i=1}^{\infty}Z_i\}.
  \end{align*}
    \item For any $\delta>0$, there exists a cover $\cup_{i=1}^{\infty}Z_i=Z$ (which depends on both $\varepsilon$ and $\delta$) such that
  \begin{align*}
    h^P_{top}(Z,\varepsilon,\{F_n\})+\delta\ge \sup_{i\ge 1} h^{UC}_{top}(Z_i,3\varepsilon,\{F_n\}).
  \end{align*}
  \end{enumerate}

\end{proposition}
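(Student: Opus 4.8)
The plan is to treat the two inequalities separately. Part (1) will follow immediately from results already in hand, while part (2) requires an explicit decomposition of $Z$ that mirrors the classical characterization of packing dimension through upper box dimension, and it is there that the real work lies.

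For part (1), I would fix an arbitrary countable cover $Z=\bigcup_{i\ge 1}Z_i$. Applying Proposition \ref{prop-packing-uc} to each piece gives $h^P_{top}(Z_i,\varepsilon,\{F_n\})\le h^{UC}_{top}(Z_i,\varepsilon,\{F_n\})$, while part (2) of Proposition \ref{prop-basic} bounds the packing entropy of $Z$ at scale $\varepsilon$ by the supremum over the pieces. Chaining these yields $h^P_{top}(Z,\varepsilon,\{F_n\})\le \sup_{i\ge 1} h^{UC}_{top}(Z_i,\varepsilon,\{F_n\})$, and taking the infimum over all covers gives the first inequality. For the ``hence'' statement I would use that $h^{UC}_{top}(Z_i,\varepsilon,\{F_n\})$ increases as $\varepsilon$ decreases and is therefore dominated by its limit $h^{UC}_{top}(Z_i,\{F_n\})$; hence for every cover and every $\varepsilon$ one has $h^P_{top}(Z,\varepsilon,\{F_n\})\le \sup_{i} h^{UC}_{top}(Z_i,\{F_n\})$, and letting $\varepsilon\to 0$ followed by the infimum over covers finishes it.

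For part (2), set $s=h^P_{top}(Z,\varepsilon,\{F_n\})+\delta$. Since $s$ exceeds the critical value, by definition $\mathcal{P}(Z,\varepsilon,s,\{F_n\})=0$. As $\mathcal{P}$ is an infimum over covers of $\sum_i P(Z_i,\varepsilon,s,\{F_n\})$, I can choose a cover $\bigcup_i Z_i\supseteq Z$ with $\sum_i P(Z_i,\varepsilon,s,\{F_n\})<1$; replacing each $Z_i$ by $Z_i\cap Z$ (which only decreases the quantities involved) I may assume $\bigcup_i Z_i=Z$ and that every $P(Z_i,\varepsilon,s,\{F_n\})$ is finite. It then suffices to prove $h^{UC}_{top}(Z_i,3\varepsilon,\{F_n\})\le s$ for each $i$.

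Fix $i$. The key step, which I expect to be the crux, is to convert separated sets (which compute the upper capacity entropy) into disjoint Bowen balls (which compute the packing pre-measure) at the cost of tripling the radius. Let $E\subseteq Z_i$ be a maximal $(F_n,3\varepsilon)$-separated set, so $|E|=s_{F_n}(Z_i,3\varepsilon)$. For distinct $x,y\in E$ one has $d_{F_n}(x,y)>3\varepsilon>2\varepsilon$, and the triangle inequality for $d_{F_n}$ forces $\overline{B}_{F_n}(x,\varepsilon)\cap\overline{B}_{F_n}(y,\varepsilon)=\emptyset$; thus $\{\overline{B}_{F_n}(x,\varepsilon)\}_{x\in E}$ is a pairwise disjoint family centered in $Z_i$ with common index $n$. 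Hence for every $N$ with $n\ge N$,
$$s_{F_n}(Z_i,3\varepsilon)\, e^{-s|F_n|}=\sum_{x\in E}e^{-s|F_n|}\le P(Z_i,N,\varepsilon,s,\{F_n\}).$$
Choosing $N$ large enough that $P(Z_i,N,\varepsilon,s,\{F_n\})$ is finite (possible since this quantity decreases to $P(Z_i,\varepsilon,s,\{F_n\})<\infty$), I take logarithms, divide by $|F_n|$, and pass to $\limsup_{n\to\infty}$; the term $\frac{1}{|F_n|}\log P(Z_i,N,\varepsilon,s,\{F_n\})$ vanishes because $|F_n|\to\infty$, leaving $h^{UC}_{top}(Z_i,3\varepsilon,\{F_n\})\le s$. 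Taking the supremum over $i$ gives $\sup_{i\ge 1} h^{UC}_{top}(Z_i,3\varepsilon,\{F_n\})\le h^P_{top}(Z,\varepsilon,\{F_n\})+\delta$, as desired.
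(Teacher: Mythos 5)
Your proposal is correct and follows essentially the same route as the paper's proof: part (1) by chaining Proposition \ref{prop-basic}(2) with Proposition \ref{prop-packing-uc}, and part (2) by extracting a cover with $\sum_i P(Z_i,\varepsilon,s,\{F_n\})<1$ from $\mathcal{P}(Z,\varepsilon,s,\{F_n\})=0$ and converting $(F_n,3\varepsilon)$-separated sets into disjoint closed $\varepsilon$-Bowen balls via the triangle inequality. The only cosmetic differences are that the paper bounds $P(Z_i,N,\varepsilon,s,\{F_n\})$ by $1$ (so no limsup correction term is needed) and notes explicitly that one may assume $h^P_{top}(Z,\varepsilon,\{F_n\})<\infty$.
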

\begin{proof}
For any $Z=\cup_{i=1}^{\infty}Z_i$, by Propositions \ref{prop-basic} and \ref{prop-packing-uc},
\begin{align*}
   h_{top}^P(Z,\varepsilon,\{F_n\})\le \sup_{i\ge 1}h_{top}^P(Z_i,\varepsilon,\{F_n\})\le \sup_{i\ge 1} h^{UC}_{top}(Z_i,\varepsilon,\{F_n\}).
\end{align*}
Hence $$h^P_{top}(Z,\varepsilon,\{F_n\})\le \inf\{\sup_{i\ge 1} h^{UC}_{top}(Z_i,\varepsilon,\{F_n\}): Z=\cup_{i=1}^{\infty}Z_i\}.$$

For the opposite direction, we may assume that $h^P_{top}(Z,\varepsilon,\{F_n\})<\infty$. Let $\delta>0$ be fixed and set $s=h^P_{top}(Z,\varepsilon,\{F_n\})+\delta$.
By the definition of the amenable packing entropy, we have that $\mathcal{P}(Z,\varepsilon,s,\{F_n\})=0$.
Then there exists a cover $\cup_{i=1}^{\infty}Z_i\supseteq Z$ such that
$$\sum_{i\ge 1}P(Z_i,\varepsilon,s,\{F_n\})<1.$$
For each $Z_i$, when $N$ is large enough, we have $P(Z_i,N,\varepsilon,s,\{F_n\})<1$. Let $E$ be any $(F_N,3\varepsilon)$-separated subset of $Z_i$.
Noting that the closed Bowen balls $\overline B_{F_N}(x_i,\varepsilon)$ ($x_i\in E\subset Z_i$) are pairwise disjoint, we have
\begin{align*}
  |E|e^{-s|F_N|}=\sum_{x_i\in E}e^{-s|F_N|}\le P(Z_i,N,\varepsilon,s,\{F_n\})<1.
\end{align*}
Hence $s_{F_N}(Z_i,3\varepsilon)<e^{s|F_N|}$, which leads to $h^{UC}_{top}(Z_i,3\varepsilon,\{F_n\})\le s$. Thus we have
\begin{align*}
  h_{top}^P(Z,\varepsilon,\{F_n\})+\delta \ge \sup_{i\ge 1} h^{UC}_{top}(Z_i,3\varepsilon,\{F_n\}).
\end{align*}
\end{proof}
\begin{remark}
  Proposition \ref{prop-packing-uc} is motivated from the equivalent definition of packing dimension through Minkowski dimension in geometric measure theory, which was due to Tricot \cite{T} (see also \cite{M} for reference). It is unclear for us whether it holds that
\begin{align*}
    h^P_{top}(Z,\{F_n\})= \inf\{\sup_{i\ge 1} h^{UC}_{top}(Z_i,\{F_n\}): Z=\cup_{i=1}^{\infty}Z_i\}.
  \end{align*}
\end{remark}

\section{A variational principle for amenable packing entropy}
In this section we will give the proof of our Theorem \ref{th-viriational-pack}, the variational principle for amenable packing topological entropy.
We assume in this section $\{F_n\}$ is a sequence of finite subsets in $G$ and satisfies the growth condition \eqref{eq-1-1}.

\subsection{Lower bound}

\begin{proposition}\label{prop-lower}
 Let $Z\subseteq X$ be a Borel set, then
 $$h^P_{top}(Z, \{F_n\})\ge\sup\{\overline h_{\mu}^{loc}(Z,\{F_n\}):\mu\in M(X),\mu(Z)=1\}.$$
\end{proposition}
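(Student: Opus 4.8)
The plan is to prove the inequality $h^P_{top}(Z,\{F_n\})\ge \overline h_{\mu}^{loc}(Z,\{F_n\})$ for an arbitrary $\mu\in M(X)$ with $\mu(Z)=1$, and then take the supremum over such $\mu$. Since $\overline h_{\mu}^{loc}(Z,\{F_n\})$ is defined as an integral over $Z$ of the pointwise upper local entropy, I would first reduce to a level set on which this pointwise quantity is suitably controlled. Specifically, for a fixed threshold $s$ strictly below the integral, I would introduce for each $\ell\in\N$ the set
\begin{equation*}
Z_{\ell}=\Bigl\{x\in Z:\ \limsup_{n\to+\infty}-\tfrac{1}{|F_n|}\log\mu(B_{F_n}(x,\varepsilon))> s\ \text{ for all }\varepsilon<\tfrac{1}{\ell}\Bigr\},
\end{equation*}
so that $Z=\bigcup_{\ell}Z_{\ell}$ up to a $\mu$-null set, and choose $\ell$ with $\mu(Z_{\ell})>0$. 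Passing to the measure $\mu_{\ell}=\mu|_{Z_\ell}/\mu(Z_\ell)$ and working at a fixed small $\varepsilon<1/\ell$, every point $x\in Z_\ell$ satisfies $\mu(B_{F_n}(x,\varepsilon))\le e^{-s|F_n|}$ along a sequence $n\to\infty$.

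The core of the argument is a mass-distribution/covering estimate showing $h^P_{top}(Z_\ell,\varepsilon,\{F_n\})\ge s$, from which $h^P_{top}(Z,\{F_n\})\ge s$ follows by monotonicity (Proposition \ref{prop-basic}(1)) and then letting $\varepsilon\to 0$ and $s\uparrow\overline h_\mu^{loc}(Z,\{F_n\})$. To get this, take any cover $Z_\ell\subseteq\bigcup_i Y_i$; I must bound $\sum_i P(Y_i,\varepsilon,s,\{F_n\})$ from below by a positive constant, which forces $\mathcal P(Z_\ell,\varepsilon,s,\{F_n\})>0$ and hence the critical exponent to be at least $s$. For each $Y_i$ meeting $Z_\ell$ in positive measure, I would construct a large pairwise-disjoint family of closed Bowen balls $\overline B_{F_{n}}(x,\varepsilon)$ centered in $Y_i\cap Z_\ell$: starting from the positive measure of $Y_i\cap Z_\ell$ and the fact that each center has $\mu$-small Bowen ball ($\le e^{-s|F_n|}$) for arbitrarily large $n$, a greedy/Vitali-type selection of disjoint balls shows that their exponential weights sum to at least a fixed fraction of $\mu(Y_i\cap Z_\ell)$. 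Summing over $i$ recovers a lower bound proportional to $\mu(Z_\ell)>0$, giving $P$-mass bounded below and the desired conclusion.

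The main obstacle will be the disjointification step in the amenable setting: unlike the $\Z$-case, the radii $n_i$ on which the Bowen-ball smallness holds vary from point to point, and the Bowen balls $\overline B_{F_n}(x,\varepsilon)$ for different $n$ are not nested, so a naive Besicovitch/Vitari argument does not immediately apply. I expect one needs a careful selection argument — choosing, for each point, a scale $n$ at which $\mu(B_{F_n}(x,\varepsilon))\le e^{-s|F_n|}$ and then extracting a maximal disjoint subfamily — together with the elementary observation that the union of the selected closed balls still captures essentially all the $\mu$-mass of $Y_i\cap Z_\ell$ (using that the $3\varepsilon$-enlargements cover). Controlling the overlap while keeping the total weight comparable to $\mu(Y_i\cap Z_\ell)$ is the delicate point; the growth condition \eqref{eq-1-1} is not needed here (it enters the upper bound via Proposition \ref{prop-packing-uc}), so the lower bound should rest purely on the mass-distribution principle adapted to the $P$-functional defining packing entropy.
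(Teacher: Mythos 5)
Your overall skeleton (reduce to a positive-measure level set, then run a mass-distribution argument through a Vitali-type selection of disjoint Bowen balls) is indeed the skeleton of the paper's proof, but your proposal stops exactly where the real work is, and at that point it makes a claim that is false for this argument. The obstacle you name --- that the scale $n$ at which $\mu(B_{F_n}(x,\varepsilon))\le e^{-s|F_n|}$ holds varies from point to point, and that Bowen balls at different scales live in incomparable metrics $d_{F_n}$ --- is never resolved by a cross-scale ``careful selection'' in the paper, and you do not supply one either. The paper sidesteps it: writing $E=\bigcup_{n\ge N}E_n$ with $E_n=\{x\in E:\mu(B_{F_n}(x,\varepsilon))<e^{-|F_n|(s+\delta)}\}$, it pigeonholes a single scale $n\ge N$ with $\mu(E_n)\ge \mu(E)/(n(n+1))$ and applies the $5r$-covering Lemma \ref{lemma-5r} inside the single metric space $(X,d_{F_n})$, where it is legitimate. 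This costs a factor $1/(n(n+1))$, and the two ingredients your proposal discards are precisely what pay for it: the level set must be taken with a margin, i.e.\ pointwise $\limsup>s+\delta$ rather than $>s$, so that the packing weights satisfy $e^{-s|F_n|}=e^{\delta|F_n|}e^{-(s+\delta)|F_n|}\ge e^{\delta|F_n|}\mu(B_{F_n}(x_i,\varepsilon))$; and the growth condition \eqref{eq-1-1} is exactly what makes $e^{\delta|F_n|}/(n(n+1))\to+\infty$, hence $P(E,\varepsilon/5,s,\{F_n\})=+\infty$. Your level set $Z_\ell$ has no $\delta$-margin, so even this single-scale argument would only produce the useless bound $\mu(E)/(n(n+1))\to 0$; and your explicit assertion that \eqref{eq-1-1} ``is not needed here'' is wrong --- it is used in the lower bound at precisely this step (it is the standing hypothesis of Section 3 and of Theorem \ref{th-viriational-pack}, not only of Proposition \ref{prop-packing-uc}).

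A cross-scale greedy selection of the kind you hope for can be made to work only under extra hypotheses, e.g.\ if the sets $F_n$ are nested, since then $d_{F_n}\le d_{F_m}$ for $n\le m$ and a ball at a later scale meeting a selected ball at an earlier scale is swallowed by the enlargement of the earlier ball; the paper assumes no such nestedness, only \eqref{eq-1-1}. There is also a smaller point your sketch glosses over: the covers $\{Y_i\}$ occurring in the definition of $\mathcal P$ are arbitrary sets, so $Y_i\cap Z_\ell$ need not be Borel and ``$\mu(Y_i\cap Z_\ell)>0$'' is not meaningful as stated; the paper replaces each $Y_i$ by its closure, finds a Borel set $E\subset\overline{Y_i}\cap A$ of positive measure, and transfers the estimate back from $\overline{Y_i}$ at radius $\varepsilon/5$ to $Y_i$ at radius $\varepsilon/10$ via Lemma \ref{lemma-3-3}. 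That issue is fixable, but it belongs in a correct write-up. As it stands, your proposal identifies the right difficulty, leaves it open, and removes the two devices (the $\delta$-margin and the growth condition \eqref{eq-1-1}) that the actual proof uses to overcome it.
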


For the proof, we need the following classical $5r$-Lemma in geometric measure theory (cf. \cite{M}, Theorem 2.1).

\begin{lemma}[$5r$-Lemma]\label{lemma-5r}
Let $(X, d)$ be a compact metric space and $\mathcal B =\{B(x_i, r_i)\}_{i\in I}$ be a family of closed (or open) balls in $X$.
Then there exists a finite or countable subfamily $\mathcal B'=\{B(x_i, r_i)\}_{i\in I'}$ of pairwise disjoint balls in $\mathcal B$
such that
$$\bigcup_{B\in\mathcal B}B\subseteq \bigcup_{i\in I'}B(x_i,5r_i).$$
\end{lemma}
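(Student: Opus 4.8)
The plan is to prove this as a purely metric fact, via the standard greedy selection argument from geometric measure theory, using a dyadic decomposition of the radii to accommodate the fact that the $r_i$ are not assumed bounded below. First I would observe that since $(X,d)$ is compact it has finite diameter, and any ball of radius exceeding the diameter equals $X$; hence $R:=\sup_{i\in I}r_i<\infty$, and (balls being assumed of positive radius) we may work with $0<r_i\le R$ for all $i$. Next I would stratify the index set by the size of the radius: for each $n\ge 1$ set
$$I_n=\{i\in I:\ R\,2^{-n}<r_i\le R\,2^{-(n-1)}\},$$
so that $I=\bigsqcup_{n\ge 1}I_n$.

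I would then build the disjoint subfamily inductively across these layers. Having chosen pairwise disjoint subfamilies for layers $1,\dots,n-1$, let $\mathcal G_n$ be the collection of balls $B(x_i,r_i)$ with $i\in I_n$ that are disjoint from every ball chosen in the earlier layers, and let $\mathcal B_n'$ be a \emph{maximal} pairwise disjoint subfamily of $\mathcal G_n$ (existence by Zorn's lemma). Setting $\mathcal B'=\bigcup_{n\ge 1}\mathcal B_n'$ with index set $I'$, the family is pairwise disjoint by construction. Countability of $\mathcal B'$ follows from separability of the compact space $X$: each ball of positive radius contains a nonempty open ball, disjoint balls contain disjoint nonempty open sets, and a separable space admits only countably many pairwise disjoint nonempty open sets.

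For the covering property, I would fix any $B(x_i,r_i)$ with $i\in I_n$. By the maximality built into the construction it must meet some selected ball $B(x_j,r_j)\in\mathcal B'$ from an earlier or equal layer $m\le n$: if it meets none from the earlier layers then it lies in $\mathcal G_n$, whence by maximality of $\mathcal B_n'$ it is either itself selected or meets a member of $\mathcal B_n'$. Since $m\le n$ we get $r_j>R\,2^{-m}\ge R\,2^{-n}\ge r_i/2$, i.e.\ $r_i\le 2r_j$. Choosing $z\in B(x_i,r_i)\cap B(x_j,r_j)$, the triangle inequality gives, for every $y\in B(x_i,r_i)$,
$$d(y,x_j)\le d(y,x_i)+d(x_i,z)+d(z,x_j)\le 2r_i+r_j\le 5r_j,$$
so $B(x_i,r_i)\subseteq B(x_j,5r_j)$, and therefore $\bigcup_{B\in\mathcal B}B\subseteq\bigcup_{i\in I'}B(x_i,5r_i)$.

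I expect the only genuine subtlety — rather than a serious obstacle — to be the bookkeeping that produces the constant $5$: the greedy selection must be organized by dyadic radius layers precisely so that every unselected ball is guaranteed to intersect a selected ball of \emph{comparable or larger} radius ($r_i\le 2r_j$), which is exactly what upgrades the crude estimate to the sharp factor $5$. The compactness hypothesis is used only to bound the radii and to supply separability for countability; the argument is otherwise independent of the dynamics and the group action, which is why the lemma can be quoted freely in the sequel.
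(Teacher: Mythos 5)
Correct: the paper gives no proof of this lemma at all, simply citing Mattila (\cite{M}, Theorem 2.1), and your dyadic-layer greedy selection is exactly the standard argument proving it there, with the triangle-inequality step $d(y,x_j)\le 2r_i+r_j\le 5r_j$ valid verbatim for both open and closed balls. One wording slip to repair: compactness does not by itself give $\sup_{i\in I}r_i<\infty$ (the radii may be an unbounded set of numbers even though every large ball equals $X$), so instead replace each $r_i$ by $\min\{r_i,\operatorname{diam}X\}$ --- this changes no ball as a set and only shrinks the dilates $B(x_i,5r_i)$, so the truncated statement implies the original --- after which your stratification, the maximality argument, the comparison $r_i\le 2r_j$ for a selected ball from a layer $m\le n$, and the countability-via-separability remark all go through verbatim.
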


We also need the following lemma, which comes from the definition of the packing entropy directly.

\begin{lemma}\label{lemma-3-3}
  Let $E\subset X$ and $s>0$. Then for any $0<\varepsilon_1<\varepsilon_2$,
  $$P(\overline E, \varepsilon_2, s, \{F_n\})\le P(E, \varepsilon_1, s, \{F_n\}).$$
\end{lemma}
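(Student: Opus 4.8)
The plan is to exploit the fact that the summand $\exp(-s|F_{n_i}|)$ in the definition of $P$ depends only on the index $n_i$, not on the center or the radius of the Bowen ball. Hence it suffices to show that every admissible configuration counted by $P(\overline E,N,\varepsilon_2,s,\{F_n\})$ can be replaced by an admissible configuration counted by $P(E,N,\varepsilon_1,s,\{F_n\})$ carrying exactly the same indices $n_i$; the two weight sums then coincide, and taking suprema over configurations followed by $N\to\infty$ gives the inequality.

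Concretely, I would fix $N$ and let $\{\overline{B}_{F_{n_i}}(x_i,\varepsilon_2)\}_i$ be a pairwise disjoint family with $x_i\in\overline E$ and $n_i\ge N$. Put $\delta=\varepsilon_2-\varepsilon_1>0$. For each fixed $i$, the function $y\mapsto d_{F_{n_i}}(x_i,y)=\max_{g\in F_{n_i}}d(gx_i,gy)$ is continuous (a maximum over the finite set $F_{n_i}$ of the continuous maps $y\mapsto d(gx_i,gy)$, each continuous since $g$ acts continuously) and vanishes at $y=x_i$. Thus it is $<\delta$ on some $d$-ball around $x_i$, and because $x_i\in\overline E$ this $d$-ball meets $E$; I would pick $y_i\in E$ there, so that $d_{F_{n_i}}(x_i,y_i)<\delta$.

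I would then verify that $\{\overline{B}_{F_{n_i}}(y_i,\varepsilon_1)\}_i$ is pairwise disjoint. If a point $z$ belonged to both the $i$-th and $j$-th new balls for some $i\ne j$, then in the respective Bowen metrics $d_{F_{n_i}}(x_i,z)\le d_{F_{n_i}}(x_i,y_i)+d_{F_{n_i}}(y_i,z)<\delta+\varepsilon_1=\varepsilon_2$ and likewise $d_{F_{n_j}}(x_j,z)<\varepsilon_2$, so $z$ would lie in both $\overline{B}_{F_{n_i}}(x_i,\varepsilon_2)$ and $\overline{B}_{F_{n_j}}(x_j,\varepsilon_2)$, contradicting disjointness of the original family. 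Hence the new family is admissible for $E$ at radius $\varepsilon_1$ and threshold $N$, with identical weight sum, which yields $P(\overline E,N,\varepsilon_2,s,\{F_n\})\le P(E,N,\varepsilon_1,s,\{F_n\})$ and, upon letting $N\to\infty$, the stated inequality.

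The argument is short, and its only delicate point is bookkeeping: the balls in a single family are governed by different Bowen metrics $d_{F_{n_i}}$, so each center must be perturbed within its own metric and disjointness must be checked pair by pair rather than through one uniform estimate. The positive slack $\varepsilon_2-\varepsilon_1$ is precisely what creates room to slide each center from $\overline E$ into $E$ without destroying disjointness, and this is the mechanism underlying the radius drop from $\varepsilon_2$ to $\varepsilon_1$.
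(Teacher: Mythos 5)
Your proof is correct, and it is exactly the argument the paper has in mind: the paper omits the proof entirely, remarking only that the lemma ``comes from the definition of the packing entropy directly,'' and your center-perturbation argument (using the slack $\varepsilon_2-\varepsilon_1$ to slide each center from $\overline{E}$ into $E$ while preserving pairwise disjointness of the shrunken balls and the weight sum) is the natural filling-in of that claim.
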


\begin{proof}[Proof of Proposition \ref{prop-lower}]
Let $\mu\in M(X)$ with $\mu(Z)=1$ and assume $\overline h_{\mu}^{loc}(Z,\{F_n\})>0$.

Let $0<s<\overline h_{\mu}^{loc}(Z,\{F_n\})$. Then there exist $\varepsilon,\delta>0$ and a Borel set $A\subset Z$ with $\mu(A)>0$ such that for every $x\in A$, it holds that
$$\overline h_{\mu}(x,\varepsilon,\{F_n\})>s+\delta,$$
where $\overline h_{\mu}(x,\varepsilon,\{F_n\}):=\limsup\limits_{n\rightarrow+\infty}-\frac{1}{|F_n|}\log \mu(B_{F_n}(x,\varepsilon))$.

Let $E\subset A$ be any Borel set with $\mu(E)>0$. Define
$$E_n=\{x\in E:\mu(B_{F_n}(x,\varepsilon))<e^{-|F_n|(s+\delta)}\}, n\in\N.$$
Then $\bigcup_{n=N}^{+\infty}E_n=E$ for any $N\in\N$, and hence $\mu(\bigcup_{n=N}^{+\infty}E_n)=\mu(E)$. Fix $N\in \N$, there exists $n\ge N$ such that
$$\mu(E_n)\ge \frac{1}{n(n+1)}\mu(E).$$

Fix such $n$ and consider the family of Bowen balls $\{B_{F_n}(x,\frac{\varepsilon}{5}):x\in E_n\}$. By the $5r$-Lemma \ref{lemma-5r} (we use metric $d_{F_n}$ instead of $d$),
there exists a finite pairwise disjoint family $\{B_{F_n}(x_i,\frac{\varepsilon}{5})\}$ with $x_i\in E_n$ such that
$$\cup_{i}B_{F_n}(x_i,\varepsilon)\supset \cup_{x\in E_n}B_{F_n}(x,\frac{\varepsilon}{5})\supset E_n.$$
Thus
\begin{align*}
  P(E,N,\frac{\varepsilon}{5},s,\{F_n\})&\ge P(E_n,N,\frac{\varepsilon}{5},s,\{F_n\})\ge \sum_{i}e^{-|F_n|s}=e^{|F_n|\delta}\sum_{i}e^{-|F_{n}|(s+\delta)}\\
   &\ge e^{|F_n|\delta}\sum_{i}\mu(B_{F_n}(x_i,\varepsilon))\ge e^{|F_n|\delta}\mu(E_n)\ge \frac{e^{|F_n|\delta}}{n(n+1)}\mu(E).
\end{align*}
By the growth condition \eqref{eq-1-1} of the sequence $\{F_n\}$, we have
$$\frac{e^{|F_n|\delta}}{n(n+1)}\rightarrow +\infty, \text{ as }n\rightarrow +\infty.$$
Letting $N\rightarrow+\infty$, we obtain that
$$P(E,\frac{\varepsilon}{5},s,\{F_n\})=+\infty.$$
Note that this equality holds for every Borel set $E\subset A$ with $\mu(E)>0$.

Let $\{A_i\}_{i=1}^\infty$ be any covering of $A$. Then by Lemma \ref{lemma-3-3},
$$\sum_i P(A_i,\frac{\varepsilon}{10},s,\{F_n\})\ge \sum_i P(\overline A_i,\frac{\varepsilon}{5},s,\{F_n\}).$$
Since there must exist some $A_i$ such that $\overline A_i\cap A$ (which is a Borel set now)
contains a Borel subset $E\subset \overline A_i\cap A$ with $\mu(E)>0$, we have
$$\sum_i P(A_i,\frac{\varepsilon}{10},s,\{F_n\})\ge P(E,\frac{\varepsilon}{5},s,\{F_n\})=+\infty.$$
Thus
$$\mathcal P (Z,\frac{\varepsilon}{10},s,\{F_n\})\ge \mathcal P (A,\frac{\varepsilon}{10},s,\{F_n\})=+\infty,$$
which deduces that
$$h_{top}^P(Z,\{F_n\})\ge h_{top}^P(Z,\frac{\varepsilon}{10},\{F_n\})\ge s.$$

Since $s$ is chosen arbitrarily in $(0,\overline h_{\mu}(Z,\{F_n\}))$, we finally show that
$$h_{top}^P(Z,\{F_n\})\ge \overline h_{\mu}^{loc}(Z,\{F_n\}).$$
This finishes the proof of Proposition \ref{prop-lower}.
\end{proof}

\subsection{Upper bound}

The following Proposition is the upper bound part of the variational principle. In fact it is valid for any analytic set $Z$.
Recall that a set in a metric space is said to be {\it analytic} if it is a continuous image of the set $\mathcal N$ of infinite sequences of natural
numbers. In a Polish space, the collection of analytic subsets contains Borel sets and is
closed under countable unions and intersections (c.f. Chapter 11 of \cite{J}).

\begin{proposition}\label{prop-upper}
  Let $Z\subseteq X$ be an analytic set with $h^P_{top}(Z, \{F_n\})>0$. For any $0<s<h^P_{top}(Z, \{F_n\})$,
  there exists a compact set $K\subseteq Z$ and $\mu\in M(K)$ such that $\overline h_{\mu}^{loc}(K,\{F_n\})\ge s$.
\end{proposition}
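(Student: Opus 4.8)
The plan is to carry Feng and Huang's weighted Moran-set construction \cite{FH} over to the amenable setting. Fix $s<s'<h^P_{top}(Z,\{F_n\})$ and choose $\varepsilon>0$ small enough that $h^P_{top}(Z,\varepsilon,\{F_n\})>s'$, so that $\mathcal P(Z,\varepsilon,s',\{F_n\})=+\infty$. The first step is a capacitability reduction: since $Z$ is analytic and $A\mapsto\mathcal P(A,\varepsilon,s',\{F_n\})$ is (the restriction of) a metric outer measure, analytic sets are capacitable with respect to it, and one extracts a compact subset $\tilde Z\subseteq Z$ with $h^P_{top}(\tilde Z,\varepsilon,\{F_n\})\ge s'$. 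Working inside the compact (hence closed) set $\tilde Z$ is precisely what will force the Moran set produced below to lie in $Z$.

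Next I would saturate $\tilde Z$. Let $\{U_j\}$ be a countable base of $X$ and put $W=\bigcup\{U_j:\mathcal P(\tilde Z\cap U_j,\varepsilon,s',\{F_n\})=0\}$. Countable subadditivity of $\mathcal P$ gives $\mathcal P(\tilde Z\cap W,\varepsilon,s',\{F_n\})=0$, so $Z':=\tilde Z\setminus W$ still satisfies $\mathcal P(Z',\varepsilon,s',\{F_n\})=+\infty$ and enjoys the full-portion property: for every open $U$ meeting $Z'$ one has $h^P_{top}(Z'\cap U,\varepsilon,\{F_n\})\ge s'$, and therefore, since $s<s'$ and $\mathcal P\le P$,
$$P(Z'\cap U,N,\varepsilon,s,\{F_n\})=+\infty\quad\text{for every }N\in\N.$$
Thus inside every portion of $Z'$ and at every level $N$ we may select finite pairwise disjoint families of closed Bowen balls of radius $\varepsilon$, centred in $Z'$, whose total weight $\sum_i e^{-s|F_{n_i}|}$ is as large as we please.

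With this in hand I would build, by induction on $k$, a finite pairwise disjoint family $\mathcal B_k=\{\overline B_{F_{m_k}}(x,\varepsilon)\}$ of balls at a common, rapidly increasing scale $m_k$, together with a probability weight $\mu$ on the members of the $\mathcal B_k$, maintaining the invariants: (i) each member of $\mathcal B_{k+1}$ is a child of a unique $B\in\mathcal B_k$ and the children of $B$ have total weight $\mu(B)$; (ii) $\mu(B)\le C\,e^{-s|F_{m_k}|}$ for a uniform constant $C$; and, crucially, (iii) every descendant centre of $B$ stays inside $B$, so that the weak-$*$ limit measure $\mu$ is carried by the compact Moran set $K=\bigcap_k\bigcup\mathcal B_k\subseteq\tilde Z\subseteq Z$. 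The full-portion property supplies the children with enough total weight, while the growth condition \eqref{eq-1-1} guarantees that series such as $\sum_k e^{-s|F_{m_k}|}$ converge, so that requirements (i) and (ii) are compatible. Then, for each $x\in K$ and each level $k$, invariant (iii) places $x$ in a single member of $\mathcal B_k$; arranging, via the $5r$-Lemma \ref{lemma-5r} applied in the metric $d_{F_{m_k}}$, the centres of $\mathcal B_k$ to be well separated, the ball $\overline B_{F_{m_k}}(x,\varepsilon)$ meets only that member, whence $\mu(\overline B_{F_{m_k}}(x,\varepsilon))\le C\,e^{-s|F_{m_k}|}$. Taking $-\tfrac1{|F_{m_k}|}\log(\cdot)$ and letting $k\to\infty$ gives $\overline h_{\mu}(x,\delta,\{F_n\}):=\limsup_{n}-\tfrac1{|F_n|}\log\mu(B_{F_n}(x,\delta))\ge s$ for every $0<\delta\le\varepsilon$, and hence $\overline h_{\mu}^{loc}(K,\{F_n\})\ge s$ since $\mu(K)=1$.

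I expect the main obstacle to be the construction itself, and specifically the bookkeeping behind invariant (iii). For $\Z$-actions the sets $F_n=[0,n-1]$ are nested, so Bowen balls at larger times automatically sit inside those at smaller times and the Moran nesting comes for free; for a general sequence $\{F_n\}$ no such nesting holds, and one must simultaneously honour the containment constraints coming from all previous, mutually incompatible, scales $m_1,\dots,m_k$ while still extracting children of large total weight from each portion. Reconciling these constraints with the uniform per-ball bound (ii), and choosing the scales $m_k$ to grow fast enough that the series governed by \eqref{eq-1-1} behave, is where the real difficulty lies.
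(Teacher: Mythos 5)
Your overall strategy (saturation via a countable base, Lemma \ref{lemma-packing}-type selections, a weighted Moran construction, weak$^*$ limit) is the same as the paper's, but the proposal has two genuine gaps, and you have explicitly left the second one unresolved.

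First, the ``capacitability reduction'' is not available. The set function $A\mapsto\mathcal P(A,\varepsilon,s',\{F_n\})$ is indeed a metric outer measure, but metric outer measures of Hausdorff/packing type are \emph{not} Choquet capacities: they fail continuity from above along decreasing sequences of compact sets, so Choquet's capacitability theorem does not apply to them, and there is no general principle saying that an analytic set of infinite $\mathcal P$-value contains a compact subset of large $\mathcal P$-value. Inner regularity statements of this kind are themselves deep theorems (Besicovitch--Davies for Hausdorff measures, Joyce--Preiss \cite{JP} for packing measures), and in the packing case the proof \emph{is} the Moran-type construction you are trying to run; moreover those theorems concern genuine packing measures, not the Bowen-ball quantities $\mathcal P(\cdot,\varepsilon,s,\{F_n\})$, so they cannot simply be cited here. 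In effect, your first step assumes a statement essentially equivalent to the proposition itself. The paper avoids any such reduction: it keeps the analytic parametrization $\phi:\mathcal N\to Z$, constrains each generation $K_i$ to lie in the truncated image $Z_{n_1,\ldots,n_i}$, and uses continuity of $\phi$ plus a Cantor diagonal argument to conclude that the limit set $K=\bigcap_n\overline{\bigcup_{i\ge n}K_i}$ is a compact subset of $Z$.

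Second, the construction you sketch hinges on your invariant (iii) --- that descendants stay inside the parent Bowen ball --- and you yourself note that for a general sequence $\{F_n\}$ the Bowen balls at different scales do not nest, leaving this ``where the real difficulty lies.'' That difficulty is precisely what the paper's proof resolves, and it does so by \emph{not} nesting Bowen balls at all: the Moran structure is carried by ordinary metric balls $B(x,\gamma_i)$, with children chosen in $B(x,\gamma_i/4)$, while the Bowen balls enter only through disjointness conditions such as \eqref{eq-ball} and \eqref{eq-balli}. Since each $K_i$ is finite and disjoint compact sets are at positive distance, one may choose $\gamma_i$ so small that for \emph{any} perturbation $z(x)\in B(x,\gamma_i)$ of the centres, the balls $\overline B_{F_{m_i(x)}}(z(x),\varepsilon)$ remain disjoint from all other branches; this is what yields $\tilde\mu(\overline B_{F_{m_i(x)}}(z,\varepsilon))\le\tilde\mu(\overline B(x,\gamma_i/2))\le Ce^{-|F_{m_i(x)}|s}$ for every $z\in K$. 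Relatedly, your insistence on a common scale $m_k$ per generation is too rigid: Lemma \ref{lemma-packing} produces disjoint families with \emph{varying} indices $n_i\ge N$, and it is exactly this freedom that lets one hit the required weight windows in \eqref{ineq-mui}; different portions of $Z'$ need not have their good scales at a common $n$. Without these two ingredients the proposal does not constitute a proof.
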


The following lemma is needed.

\begin{lemma}\label{lemma-packing}
Let $Z\subseteq X$ and $s,\varepsilon>0.$ Assume that $\mathcal P (Z,\varepsilon,s,\{F_n\})=+\infty$. Then for any given
finite interval $(a,b)\subset \R$ with $a\ge 0$ and any $N\in\N$, there exists a finite disjoint
collection $\{\overline B_{F_{n_i}}(x_i, \varepsilon)\}$ such that $x_i\in Z, n_i\ge N$ and
$\sum_i\exp(-|F_{n_i}|s)\in (a,b)$.
\end{lemma}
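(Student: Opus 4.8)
The plan is to reduce the hypothesis on the outer quantity $\mathcal P$ to a statement about the pre-measure $P$, and then to run a discrete ``intermediate value'' argument on partial sums, using the divergence $|F_n|\to\infty$ to ensure that each individual ball contributes a weight smaller than the length of the target window $(a,b)$.

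First I would observe that $\mathcal P(Z,\varepsilon,s,\{F_n\})=+\infty$ forces $P(Z,\varepsilon,s,\{F_n\})=+\infty$: taking the trivial cover $Z_1=Z$ in the definition of $\mathcal P$ yields $\mathcal P(Z,\varepsilon,s,\{F_n\})\le P(Z,\varepsilon,s,\{F_n\})$. Since $P(Z,N,\varepsilon,s,\{F_n\})$ is non-increasing in $N$ with limit $P(Z,\varepsilon,s,\{F_n\})=+\infty$, we conclude $P(Z,N,\varepsilon,s,\{F_n\})=+\infty$ for \emph{every} $N$. Thus for each $N$ the supremum defining $P(Z,N,\cdots)$, taken over pairwise disjoint families of closed Bowen balls $\overline B_{F_{n_i}}(x_i,\varepsilon)$ with $x_i\in Z$ and $n_i\ge N$, is infinite.

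The crucial step is to tame the size of the summands. Using $|F_n|\to\infty$, I would choose $N'\ge N$ so large that $\exp(-s|F_n|)<b-a$ for all $n\ge N'$. Since $P(Z,N',\varepsilon,s,\{F_n\})=+\infty>b$, there is a \emph{finite} pairwise disjoint family $\{\overline B_{F_{n_i}}(x_i,\varepsilon)\}_{i=1}^{m}$ with $x_i\in Z$, $n_i\ge N'\ge N$, and $\sum_{i=1}^{m}\exp(-s|F_{n_i}|)>b$ (finiteness is free, as any countable family with sum exceeding $b$ has a finite subfamily still exceeding $b$). Now set $S_0=0$ and $S_j=\sum_{i=1}^{j}\exp(-s|F_{n_i}|)$. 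Because $S_m>b>a$ while $S_0=0\le a$ (this is exactly where $a\ge 0$ is used), I let $j$ be the least index with $S_j>a$; then $S_{j-1}\le a$ and $S_j-S_{j-1}=\exp(-s|F_{n_j}|)<b-a$, so $S_j<a+(b-a)=b$. Hence $S_j\in(a,b)$, and the subfamily $\{\overline B_{F_{n_i}}(x_i,\varepsilon)\}_{i=1}^{j}$ is the required disjoint collection.

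I expect the middle step to be the only real obstacle. A naive extraction produces a disjoint family whose total weight overshoots $b$, but simply deleting balls could leap across the entire window $(a,b)$ if some single weight exceeds $b-a$. The remedy—inflating $N$ to $N'$ so that every admissible ball has weight strictly below the window length $b-a$—forces the partial sums to climb in increments too small to skip the interval, and this is precisely the point at which the growth hypothesis $|F_n|\to\infty$ is invoked.
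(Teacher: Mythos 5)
Your proof is correct and follows essentially the same route as the paper, which does not write out an argument but simply defers to Lemma 4.1 of Feng--Huang \cite{FH}; that cited proof is exactly your argument: reduce $\mathcal P=+\infty$ to $P(Z,N',\varepsilon,s,\{F_n\})=+\infty$, enlarge $N$ to $N'$ so that every admissible ball has weight below $b-a$, extract a finite disjoint family overshooting $b$, and truncate using partial sums. Your handling of the two small points (monotonicity of $P$ in $N$ giving divergence at every $N$, and the role of $a\ge 0$ in starting the partial sums below the window) is also correct.
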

\begin{proof}
  See Lemma 4.1 of \cite{FH}.
\end{proof}

\begin{proof}[Proof of Proposition \ref{prop-upper}]
  Since $Z$ is analytic, there exists a continuous surjective map $\phi: \mathcal {N}\rightarrow Z$. Let
$\Gamma_{n_1,n_2,...,n_p}=\{(m_1,m_2, \ldots)\in\mathcal {N}: m_1\le n_1, m_2\le n_2, \ldots, m_p\le n_p\}$
and let $Z_{n_1,\ldots,n_p}=\phi(\Gamma_{n_1,\ldots,n_p})$.

For $0<s<h^P_{top}(Z, \{F_n\})$, take $\varepsilon>0$ small enough to make $0<s<h^P_{top}(Z,\varepsilon,\{F_n\})$ and take $t\in (s, h^P_{top}(Z,\varepsilon,\{F_n\}))$.
Followed by Feng-Huang's steps (which are inspired by the work of Joyce and Preiss \cite{JP} on packing measures),
we will construct inductively the following data.
\begin{enumerate}
  \item[(D-1)] A sequence of finite sets $(K_i)_{i=1}^{+\infty}$ with $K_i\subset Z$.
  \item[(D-2)] A sequence of finite measures $(\mu_i)_{i=1}^{+\infty}$ with each $\mu_i$ supported on $K_i$.
  \item[(D-3)] A sequence of integers $(n_i)_{i=1}^{+\infty}$ and a sequence of positive numbers $(\gamma_i)_{i=1}^{+\infty}$.
  \item[(D-4)] A sequence of integer-valued functions $(m_i: K_i\rightarrow \N)_{i=1}^{+\infty}$.
\end{enumerate}

Moreover, the sequences $(K_i),(\mu_i), (n_i),(\gamma_i)$ and $(m_i(\cdot))$ will be constructed to satisfy the following conditions.
  \begin{enumerate}
    \item [(C-1)] For each $i$, the family $\mathcal V_i:=\{B(x,\gamma_i)\}_{x\in K_i}$ is disjoint. Each element in
$\mathcal V_{i+1}$ is a subset of $B(x,\gamma_i/2)$ for some $x\in K_i$.
    \item [(C-2)] For each $i$, $K_i\subset Z_{n_1,\ldots,n_i}$ and $\mu_i=\sum_{y\in K_i}e^{-|F_{m_i(y)}|s}\delta_y$ with $1<\mu_1(K_1)<2$.
    \item [(C-3)] For each $x\in K_i$ and $z\in B(x,\gamma_i)$,
    \begin{align}\label{eq-balli}
      \overline B_{F_{m_i(x)}}(z,\varepsilon)\cap \bigcup_{y\in K_i\setminus \{x\}}B(y,\gamma_i)=\emptyset
    \end{align}
    and
    \begin{align}\label{ineq-mui}
      \mu_i(B(x,\gamma_i))=e^{-|F_{m_i(x)}|s}\le \sum_{y\in E_{i+1}(x)}e^{-|F_{m_{i+1}(y)}|s}<(1+2^{-(i+1)})\mu_i(B(x,\gamma_i)),
    \end{align}
where $E_{i+1}(x)=B(x,\gamma_i)\cap K_{i+1}$.
  \end{enumerate}
We will give the construction later.

Suppose the sequences $(K_i),(\mu_i), (n_i),(\gamma_i)$ and $(m_i(\cdot))$ have been constructed.
By \eqref{ineq-mui}, for $V_i\in \mathcal V_i$,
\begin{align*}
  \mu_i(V_i)\le \mu_{i+1}(V_i)=\sum_{V\in\mathcal V_{i+1},V\subset V_i}\mu_{i+1}(V)\le (1+2^{-(i+1)})\mu_i(V_i).
\end{align*}
Using the above inequalities repeatedly, we have for any $j>i$ and any $V_i\in \mathcal V_i$,
\begin{align}\label{ineq-mui2}
  \mu_i(V_i)\le \mu_j(V_i)\le \prod_{n=i+1}^j(1+2^{-n})\mu_i(V_i)\le C\mu_i(V_i),
\end{align}
where $C:=\prod_{n=1}^{+\infty}(1+2^{-n})<+\infty$.

Let $\tilde \mu$ be a limit point of $(\mu_i)$ in the weak$^*$ topology. Let
$$K=\bigcap_{n=1}^{+\infty}\overline{\bigcup_{i\ge n}K_i}.$$
Then $\tilde \mu$ is supported on $K$. Furthermore,
$$K\subset\bigcap_{p=1}^{+\infty}\overline{Z_{n_1,\ldots,n_p}}.$$

By the continuity of $\phi$, applying Cantor's diagonal argument, we can show that
$\bigcap_{p=1}^{+\infty}Z_{n_1,\ldots,n_p}=\bigcap_{p=1}^{+\infty}\overline{Z_{n_1,\ldots,n_p}}$.
Hence $K$ is a compact subset of $Z$.

By \eqref{ineq-mui2}, for any $x\in K_i$,
$$e^{-|F_{m_i(x)}|s}=\mu_i(B(x,\gamma_i))\le \tilde \mu(B(x,\gamma_i))\le C\mu_i(B(x,\gamma_i))=Ce^{-|F_{m_i(x)}|s}.$$
In particular,
$$1\le \sum_{x\in K_1}\mu_1(B(x,\gamma_1))\le \tilde \mu(K)\le \sum_{x\in K_1}C\mu_1(B(x,\gamma_1))\le 2C.$$
Note that $K\subset \bigcup_{x\in K_i}B(x,\gamma_i/2)$. By \eqref{eq-balli}, the first part of (C-3), for each $x\in K_i$ and $z\in \overline B(x,\gamma_i)$,
$$\tilde \mu(\overline B_{F_{m_i(x)}}(z,\varepsilon))\le\tilde \mu(\overline B(x,\gamma_i/2))\le Ce^{-|F_{m_i(x)}|s}.$$
For each $z\in K$ and $i\in\N$, $z\in \overline B(x,\gamma_i/2)$ for some $x\in K_i$. Hence
$$\tilde \mu(B_{F_{m_i(x)}}(z,\varepsilon))\le Ce^{-|F_{m_i(x)}|s}.$$

Define $\mu=\tilde \mu/\tilde \mu(K)$. Then $\mu\in M(K)$. For each $z\in K$, there exists a sequence $k_i\uparrow+\infty$ such that
$\mu(B_{F_{k_i}}(z,\varepsilon))\le Ce^{-|F_{k_i}|s}/\tilde \mu(K)$. Hence $\overline h_{\mu}^{loc}(K,\{F_n\})\ge s$.

Now the only thing left is to give the inductive construction of the data $(K_i),(\mu_i)$, $(n_i),(\gamma_i)$ and $(m_i(\cdot))$.
The inductive steps are as follows.

{\bf Step $1$.} Construct the data $K_1,\mu_1, n_1,\gamma_1$ and $m_1(\cdot)$.

Since $t\in (s, h^P_{top}(Z,\varepsilon,\{F_n\}))$, we have that $\mathcal P (Z,\varepsilon,t,\{F_n\})=+\infty$.
Let
$$H=\cup\{U\subset X : U \text{ is open}, \mathcal P (Z\cap U,\varepsilon,t,\{F_n\})=0\}.$$
Then by the separability of $X$, $H$ is a countable union of the open sets $U$'s. Hence $\mathcal P (Z\cap H,\varepsilon,t,\{F_n\})=0$.
Let $Z'=Z\setminus H=Z\cap(X\setminus H)$. If $\mathcal P (Z'\cap U,\varepsilon,t,\{F_n\})=0$ for some open set $U$, then
\begin{align*}
  \mathcal P (Z\cap U,\varepsilon,t,\{F_n\})\le \mathcal P (Z'\cap U,\varepsilon,t,\{F_n\})+\mathcal P (Z\cap H,\varepsilon,t,\{F_n\})=0.
\end{align*}
So $U\subset H$ and then $Z' \cap U=\emptyset$. Hence for any open set $U\subset X$, either $Z' \cap U =\emptyset$ or $\mathcal P (Z' \cap U,\varepsilon,t,\{F_n\})>0$.

Because $\mathcal P (Z,\varepsilon,t,\{F_n\})\le \mathcal P (Z',\varepsilon,t,\{F_n\})+\mathcal P (Z\cap H,\varepsilon,t,\{F_n\})=\mathcal P (Z',\varepsilon,t,\{F_n\})$,
we have $\mathcal P (Z',\varepsilon,t,\{F_n\})= \mathcal P (Z,\varepsilon,t,\{F_n\})=+\infty$. Then $\mathcal P (Z',\varepsilon,s,\{F_n\})=+\infty$.

By Lemma \ref{lemma-packing}, we can find a finite set $K_1\subset Z' $, an integer valued function $m_1(x)$ on $K_1$ such that the collection
$\{\overline B_{F_{m_1(x)}}(x,\varepsilon)\}_{x\in K_1}$ is disjoint and
$$\sum_{x\in K_1}e^{-|F_{m_1(x)}|s}\in (1,2).$$
Define $\mu_1=\sum_{x\in K_1} e^{-|F_{m_1(x)}|s}\delta_x$, where $\delta_x$ denotes the Dirac measure at $x$.
Take $\gamma_1>0$ sufficiently small such that for any function $z:K_1\rightarrow X$ with $\max_{x\in K_1}d(x, z(x))\le \gamma_1$, we have
for each $x\in K_1$,
\begin{align}\label{eq-ball}
  \bigg( \overline B(z(x),\gamma_1)\cup \overline B_{F_{m_1(x)}}(z(x),\varepsilon)\bigg )\cap
  \bigg(\cup_{y\in K_1\setminus \{x\}}\overline B(z(y),\gamma_1)\cup \overline B_{F_{m_1(y)}}(z(y),\varepsilon)\bigg)=\emptyset.
\end{align}

Since $K_1\subset Z'$, we have $\mathcal P (Z\cap B(x,\gamma_1/4),\varepsilon,t,\{F_n\})\ge \mathcal P (Z'\cap B(x,\gamma_1/4),\varepsilon,t,\{F_n\})>0$
for each $x\in K_1$. Therefore we can pick a sufficiently large $n_1\in \N$ so that $Z_{n_1}\supset K_1$ and
$\mathcal P (Z_{n_1}\cap B(x,\gamma_1/4),\varepsilon,t,\{F_n\})>0$ for each $x\in K_1$.

{\bf Step $2$.} Construct the data $K_2,\mu_2, n_2,\gamma_2$ and $m_2(\cdot)$.

By \eqref{eq-ball}, the family of balls $\{B(x,\gamma_1)\}_{x\in K_1}$ are pairwise disjoint. For each $x\in K_1$,
since $\mathcal P (Z_{n_1}\cap B(x,\gamma_1/4),\varepsilon,t,\{F_n\})>0$, similar as Step 1, we can construct a finite set
$E_2(x)\subset Z_{n_1}\cap B(x,\gamma_1/4)$ and an integer-valued function $m_2:E_2(x)\rightarrow \N\cap [\max \{m_1(y):y\in K_1\},+\infty)$ such that
\begin{enumerate}
  \item [(2-a)] for each open set $U$ with $U\cap E_2(x)\neq\emptyset$, $\mathcal P (Z_{n_1}\cap U,\varepsilon,t,\{F_n\})>0$;
  \item [(2-b)] the elements in the family $\{\overline B_{F_{m_2(y)}}(y,\varepsilon)\}_{y\in E_2(x)}$ are pairwise disjoint and
  \begin{align*}
    \mu_1(\{x\})<\sum_{y\in E_2(x)}e^{-|F_{m_2(y)}|s}<(1+2^{-2})\mu_1(\{x\}).
  \end{align*}
\end{enumerate}

To see this, we fix $x\in K_1$. Denote $V=Z_{n_1}\cap B(x, \gamma_1/4)$. Let
$$H_x:=\cup\{U\subset X: U \text{ is open and } \mathcal P (V\cap U,\varepsilon,t,\{F_n\})=0\}.$$
Set $V'=V\setminus H_x$. Then as in Step 1, we can show that
$$\mathcal P (V',\varepsilon,t,\{F_n\})=\mathcal P (V,\varepsilon,t,\{F_n\})>0.$$
Moreover, for any open set $U\subset X$, either $V'\cap U =\emptyset$ or $\mathcal P (V'\cap U,\varepsilon,t,\{F_n\})>0$.
Since $s<t$, it holds that $\mathcal P (V',\varepsilon,s,\{F_n\})=+\infty$. By Lemma \ref{lemma-packing},
we can find a finite set $E_2(x)\subset V'$ and a map $m_2: E_2(x)\rightarrow \N\cap [\max \{m_1(y):y\in K_1\},+\infty)$ such that (2-b) holds.
Notice that if an open set $U$ satisfies $U\cap E_2(x)\neq\emptyset$, then $U\cap V'\neq\emptyset$. So
$\mathcal P (Z_{n_1}\cap U,\varepsilon,t,\{F_n\})\ge \mathcal P (V'\cap U,\varepsilon,t,\{F_n\})>0$. Hence (2-a) holds.

Since the family $\{B(x,\gamma_1)\}_{x\in K_1}$ is disjoint, so is the family $\{E_2(x)\}_{x\in K_1}$.
Define
$$K_2=\cup_{x\in K_1}E_2(x)\text{ and }\mu_2=\sum_{y\in K_2}e^{-|F_{m_2(y)}|s}\delta_y.$$

By \eqref{eq-ball} and (2-b), the elements in $\{\overline B_{F_{m_2(y)}}(y,\varepsilon)\}_{y\in K_2}$ are pairwise disjoint.
Hence we can take $0<\gamma_2<\gamma_1/4$ such that for any function $z:K_2\rightarrow X$ satisfying that $\max_{x\in K_{2}}d(x,z(x))< \gamma_2$,
we have for $x\in K_2$,
\begin{align}\label{eq-ball2}
  \bigg( \overline B(z(x),\gamma_2)\cup \overline B_{F_{m_2(x)}}(z(x),\varepsilon)\bigg )\cap
  \bigg(\cup_{y\in K_2\setminus \{x\}}\overline B(z(y),\gamma_2)\cup \overline B_{F_{m_2(y)}}(z(y),\varepsilon)\bigg)=\emptyset.
\end{align}
Choose a sufficiently large $n_2\in \N$ so that $Z_{n_1,n_2}\supset K_2$ and
$$\mathcal P (Z_{n_1,n_2}\cap B(x,\gamma_2/4),\varepsilon,t,\{F_n\})>0$$
for each $x\in K_2$.

{\bf Step $3$.} Next we suppose that the data $K_i,\mu_i, n_i,\gamma_i$ and $m_i(\cdot)$ ($i=1,\ldots,p$)
have been constructed. We will construct the data $K_{p+1},\mu_{p+1}, n_{p+1},\gamma_{p+1}$ and $m_{p+1}(\cdot)$.

Assume that we have constructed $K_i,\mu_i, n_i,\gamma_i$ and $m_i(\cdot)$ for $i=1,\ldots,p$.
And assume that for any function $z:K_p\rightarrow X$ with $d(x,z(x))<\gamma_p$ for all
$x\in K_p$, we have
\begin{align}\label{eq-ballp}
  \bigg( \overline B(z(x),\gamma_p)\cup \overline B_{F_{m_p(x)}}(z(x),\varepsilon)\bigg )\cap
  \bigg(\cup_{y\in K_p\setminus \{x\}}\overline B(z(y),\gamma_p)\cup \overline B_{F_{m_p(y)}}(z(y),\varepsilon)\bigg)=\emptyset
\end{align}
for each $x\in K_p$; and $Z_{n_1,\ldots,n_p}\supset K_p$ and
$\mathcal P (Z_{n_1,\ldots,n_p}\cap B(x,\gamma_p/4),\varepsilon,t,\{F_n\})>0$ for each $x\in K_p$.

Note that the family of balls $\{\overline B(x,\gamma_p)\}_{x\in K_p}$ are pairwise disjoint. For each $x\in K_p$,
since $\mathcal P (Z_{n_1,\ldots,n_p}\cap B(x,\gamma_p/4),\varepsilon,t,\{F_n\})>0$, similar as Step 2, we can construct a finite set
$E_{p+1}(x)\subset Z_{n_1,\ldots,n_p}\cap B(x,\gamma_p/4)$ and an integer-valued function $m_{p+1}:E_{p+1}(x)\rightarrow \N\cap [\max \{m_p(y):y\in K_p\},+\infty)$ such that
\begin{enumerate}
  \item [(3-a)] for each open set $U$ with $U\cap E_{p+1}(x)\neq\emptyset$, $\mathcal P (Z_{n_1,\ldots,n_p}\cap U,\varepsilon,t,\{F_n\})>0$;
  \item [(3-b)] the elements in the family $\{\overline B_{F_{m_{p+1}(y)}}(y,\varepsilon)\}_{y\in E_{p+1}(x)}$ are pairwise disjoint and
  \begin{align*}
    \mu_p(\{x\})<\sum_{y\in E_{p+1}(x)}e^{-|F_{m_{p+1}(y)}|s}<(1+2^{-(p+1)})\mu_p(\{x\}).
  \end{align*}
\end{enumerate}

By \eqref{eq-ballp} and (3-b), the family $\{\overline B_{F_{m_{p+1}(y)}}(y,\varepsilon)\}_{y\in K_{p+1}}$ is disjoint.
Hence we can take $0<\gamma_{p+1}< \gamma_p/4$ such that for any function $z:K_{p+1}\rightarrow X$ with $\max_{x\in K_{p+1}}d(x,z(x))<\gamma_{p+1}$,
we have for each $x\in K_{p+1}$,
\begin{align}\label{eq-ballp+1}
  \bigg( \overline B(z(x),\gamma_{p+1})\cup &\overline B_{F_{m_{p+1}(x)}}(z(x),\varepsilon)\bigg )\nonumber \\
  &\bigcap
  \bigg(\cup_{y\in K_{p+1}\setminus \{x\}}\overline B(z(y),\gamma_{p+1})\cup \overline B_{F_{m_{p+1}(y)}}(z(y),\varepsilon)\bigg)=\emptyset.
\end{align}
Choose a sufficiently large $n_{p+1}\in \N$ so that $Z_{n_1,\ldots,n_{p+1}}\supset K_{p+1}$ and
$\mathcal P (Z_{n_1,\ldots,n_{p+1}}\cap B(x,\gamma_{p+1}/4),\varepsilon,t,\{F_n\})>0$ for each $x\in K_{p+1}$.

Then we finish the required construction and complete the proof of the Proposition.
\end{proof}

\section{Packing entropy inequalities for factor maps}\ \

In this section, we will prove Theorem \ref{thm2}. The proof is a combination of Bowen's method in Theorem 17 of \cite{B1} and quasi-tiling techniques for amenable groups.

\subsection{Preliminaries for amenable groups}\ \

Let $G$ be a countable infinite discrete amenable group.
Let $A$ and $K$ be two nonempty finite subsets of $G$. Recall that $B(A,K)$, the {\it $K$-boundary} of $A$, is defined by
$$B(A,K)=\{g\in G: Kg\cap A\neq\emptyset \text { and } Kg\cap(G\setminus A)\neq\emptyset\}.$$
For $\delta>0$, the set $A$ is said to be {\it $(K, \delta)$-invariant} if
\begin{align*}
  \frac{|B(A,K)|}{|A|}<\delta.
\end{align*}
We say a sequence of non-empty finite subsets $\{F_n\}$ of $G$ becomes more and more invariant if for any $\delta>0$ and any non-empty finite subset $K$ of $G$, $F_n$ is $(K,\delta)$-invariant for sufficiently large $n$.
An equivalent condition for the sequence $\{F_n\}$ to be a F{\o}lner sequence is that $\{F_n\}$ becomes more and more
invariant (see \cite{OW}).

Let $\tilde{\mathcal{F}}$ be a collection of finite subsets of $G$. It is said to be $\delta$-disjoint if for every $A\in \tilde{\mathcal{F}}$
there exists an $A'\subset A$ such that $|A'|\ge (1-\delta)|A|$ and such that $A'\cap B'=\emptyset$ for every $A\neq B\in \tilde{\mathcal{F}}$.

The following is a covering lemma for amenable groups by Lindenstrauss.

\begin{lemma}[Lindenstrauss's covering lemma, Corollary 2.7 of \cite{L}]\label{lemma-4-1}
  For any $\delta\in (0,1/100), C>0$ and finite $D\subset G$, let $M\in\N$ be sufficiently large (depending only on $\delta,C$ and $D$).
  Let $F_{i,j}$ be an array of finite subsets of $G$ where $i=1,\ldots,M$ and $j=1,\ldots,N_i$, with the following two requirements:
  \item [{\bf Requirement 1.}] For every $i$, $\bar{F}_{i,*}=\{F_{i,j}\}_{j=1}^{N_i}$ satisfies
  $$|\bigcup_{k'<k}F_{i,k'}^{-1}F_{i,k}|\le C|F_{i,k}|,\quad \text{ for }k=2,\ldots,N_i.$$
  \item [{\bf Requirement 2.}] Denote $F_{i,*}=\cup\bar{F}_{i,*}$. The finite set sequences $F_{i,*}$ satisfy that for every $1<i\le M$ and every $1\le k \le N_i$,
  $$|\bigcup_{i'<i}DF_{i',*}^{-1}F_{i,k}|\le (1+\delta)|F_{i,k}|.$$

  Assume that $A_{i,j}$ is another array of finite subsets of $G$ with $F_{i,j}A_{i,j}\subset F$ for some finite subset $F$ of $G$.
  Let $A_{i,*}=\cup_jA_{i,j}$ and $$\alpha=\frac{\min_{1\le i\le M}|DA_{i,*}|}{|F|}.$$

Then the collection of subsets of $F$,
  $$\tilde{\mathcal{F}}=\{F_{i,j}a: 1\le i\le M, 1\le j\le N_i \text{ and }a\in A_{i,j}\}$$
  has a subcollection $\mathcal{F}$ that is $10\delta^{1/4}$-disjoint such that
  $$|\cup \mathcal{F}|\ge (\alpha-\delta^{1/4})|F|.$$
\end{lemma}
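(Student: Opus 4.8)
The plan is to follow the quasi-tiling strategy of Ornstein and Weiss (\cite{OW}), in the sharpened form that Lindenstrauss devised in order to extract a \emph{$\delta$-disjoint} subfamily rather than a merely $\varepsilon$-covering one. I read the hypotheses as providing shapes $F_{i,j}$ at $M$ different scales $i$, each scale coming with a prescribed set of admissible centers $A_{i,j}$; the aim is to retain enough of the translates $F_{i,j}a$ ($a\in A_{i,j}$) that the kept tiles are almost pairwise disjoint while their union still fills a proportion close to $\alpha$ of the host set $F$. Since $\alpha=\min_i |DA_{i,*}|/|F|$ records, uniformly over the scales, the relative size already accounted for by the ($D$-thickened) centers of a single scale, the target density $\alpha-\delta^{1/4}$ is the best one can hope for after the unavoidable boundary losses.

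First I would fix a selection order across the scales. Requirement 2 says that for $i'<i$ the thickened preimage $\bigcup_{i'<i}DF_{i',*}^{-1}F_{i,k}$ is only a $(1+\delta)$-enlargement of $F_{i,k}$, so the shapes become overwhelmingly more invariant as $i$ grows; the natural order is therefore to lay down the coarsest, most invariant scale $i=M$ first and to descend, using the finer scales to fill the gaps left behind. Within a single scale I would invoke Requirement 1, the tempered-type bound $|\bigcup_{k'<k}F_{i,k'}^{-1}F_{i,k}|\le C|F_{i,k}|$, to run the Ornstein--Weiss Rokhlin-tower construction and thin the prescribed translates of that row to a $\delta$-disjoint packing. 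Requirement 2 then guarantees that, once a $D$-boundary layer of each tile is shaved off, the newly placed tiles meet the region already covered by the coarser scales in at most a $\delta^{1/4}$-fraction of their mass.

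The quantitative heart is to track the density of the still-uncovered part of $F$ as the scales are processed. Because the centers of each scale have relative size at least $\alpha$ and, by invariance, their tiles sit essentially inside any fixed large subset, each pass removes a definite proportion of whatever uncovered mass remains; iterating over the $M$ scales forces the uncovered proportion to decay, and for $M$ taken large in terms of $\delta,C,D$ it drops below $\delta^{1/4}$. Thus the union of the retained tiles covers at least $(\alpha-\delta^{1/4})|F|$, while the small overlaps conceded at each pass accumulate into the final $10\delta^{1/4}$-disjointness constant, every tile surrendering only an $O(\delta^{1/4})$ share of its points.

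The step I expect to be the main obstacle is exactly this simultaneous control of disjointness and density: a greedy maximal choice among the prescribed translates is trivially $\delta$-disjoint, but proving that it still captures a proportion close to $\alpha$ is what forces Lindenstrauss's averaging argument, in which one bounds the \emph{expected} overlap of a randomly translated family through Requirements 1 and 2 and then deduces the existence of a good realization. Reconciling the possibly large within-row constant $C$ with the cross-row $(1+\delta)$-invariance, while keeping the cumulative loss over the $M$ stages under control, is the delicate point, and it is precisely here that the hypothesis that $M$ be large (depending only on $\delta,C,D$) is consumed. For the full combinatorial bookkeeping I would appeal to Corollary 2.7 of \cite{L}.
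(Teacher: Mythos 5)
The paper never proves this lemma: it is imported verbatim from Lindenstrauss, and the attribution ``Corollary 2.7 of \cite{L}'' in the lemma's header is the entirety of the paper's justification. Since your proposal likewise ends by deferring ``the full combinatorial bookkeeping'' to that same corollary, you are taking exactly the paper's approach---citation of \cite{L}---and your surrounding sketch of the Ornstein--Weiss quasi-tiling and averaging strategy, while a reasonable description of how Lindenstrauss's argument goes, is not doing any independent work here.
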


\subsection{Proof of Theorem~\ref{thm2}}\ \

Let $G$, $\{F_n\}$ and $\pi:(X,G)\to (Y,G)$ be as in Theorem \ref{thm2} and $E\subset X$ be a Borel set. Let $d$ and $\rho$ be the compatible metrics on $X$ and $Y$, respectively.

For any $\varepsilon>0$, there exists $\delta>0$ such that for any $x_1,x_2\in X$ with $d(x_1,x_2)\le \delta$, one has $\rho(\pi(x_1),\pi(x_2))\le \varepsilon$.
Now let $\{y_i\}_{i=1}^{k}\subset \pi(E)$ be any $(F_n,\varepsilon)$-separated set of $\pi(E)$ and choose for each $i$ a point $x_i\in \pi^{-1}(y_i)\cap E$, then $\{x_i\}_{i=1}^k$ forms an $(F_n,\delta)$-separated set of $E$. Hence
$$s_{F_n}(\pi(E), \varepsilon)\le s_{F_n}(E, \delta)$$
and
\begin{align}\label{UC-factor}
  h^{UC}_{top}( \pi (E), \varepsilon,\{F_n\})\leq  h^{UC}_{top}(E, \delta, \{F_n\}).
\end{align}
By Proposition \ref{packing-uc} (2), for any $\eta>0$, there exists a cover $\cup_{i=1}^{\infty}E_i=E$ such that
$$h^P_{top}(E,\delta/3,\{F_n\})+\eta \ge \sup_{i\ge 1} h^{UC}_{top}(E_i,\delta,\{F_n\}).$$
Then we have
\begin{align*}
  h^P_{top}(\pi (E),\varepsilon,\{F_n\})&\le\sup_{i\ge 1} h^{P}_{top}(\pi(E_i),\varepsilon,\{F_n\})\quad \text{(by Proposition \ref{prop-basic} (2))}\\
                            &\le\sup_{i\ge 1} h^{UC}_{top}(\pi(E_i),\varepsilon,\{F_n\})\quad \text{(by Proposition \ref{prop-packing-uc})}\\
                            &\le \sup_{i\ge 1} h^{UC}_{top}(E_i,\delta,\{F_n\})\quad \text{(noting that \eqref{UC-factor} also holds for each $E_i$)}\\
                            &\le h^P_{top}(E,\delta/3,\{F_n\})+\eta,
\end{align*}
which implies that
$$h^P_{top}(\pi (E),\{F_n\})\leq  h^P_{top}( E,\{F_n\}).$$
\if

Let $\nu\in M(Y)$ with $\nu(\pi(E))=1$, then $\nu\circ\pi\in M(X)$ and $\nu\circ\pi(E)=1$. For any $\varepsilon>0,$ there exists $\delta>0$ such that $\rho(\pi(x_1),\pi(x_2))<\varepsilon$, whenever $d(x_1,x_2)<\delta$.
Then $\pi(B(x,\delta,d))\subset B(\pi(x),\varepsilon,\rho)$ and hence $\pi(B_{F_n}(x,\delta,d))\subset B_{F_n}(\pi(x),\varepsilon,\rho).$
It follows that
\begin{align*}
\overline{h}_{\nu\circ\pi}^{loc}(\{F_n\})
&= \int_X\lim_{\delta\to 0}\limsup_{n\rightarrow+\infty}\frac{-\log \nu\circ\pi(B_{F_n}(x,\delta,d))}{|F_n|}d\nu\circ\pi\\
&\geq \int_X\lim_{\varepsilon\to 0} \limsup_{n\rightarrow+\infty}\frac{-\log \nu(B_{F_n}(\pi(x),\varepsilon,\rho))}{|F_n|}d\nu\circ\pi\\
&\geq \int_Y\lim_{\varepsilon\to 0} \limsup_{n\rightarrow+\infty}\frac{-\log \nu(B_{F_n}(y,\varepsilon,\rho))}{|F_n|}d\nu\\
&=\overline{h}_{\nu}^{loc}(\{F_n\}).
\end{align*}
Thus by Theorem~\ref{th-viriational-pack},
$$h^P_{top}(\pi (E),\{F_n\})\leq  h^P_{top}( E,\{F_n\}).$$


\fi

To get the upper bound, we need to prove the following inequality for amenable upper capacity topological entropy, which extends a result by
Bowen (\cite{B1}, Theorem 17) to amenable group actions.
\begin{theorem}\label{thm-ineq-uc}\
Let $G$ be a countable infinite discrete amenable group and $\pi:(X,G)\to(Y,G)$ be a factor map between two $G$-action topological dynamical systems. Let $\{F_n\}$  be any tempered F{\o}lner sequence in $G$ satisfying the growth condition \eqref{eq-1-1}.
Then for any subset $E$ of $X$,
\begin{equation}\label{eq-4-1}
    h^{UC}_{top}( E,\{F_n\})\leq  h^{UC}_{top}(\pi (E),\{F_n\})+\sup_{y\in Y}h_{top}^{UC}(\pi^{-1}(y),\{F_n\}).
\end{equation}
\end{theorem}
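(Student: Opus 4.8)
The plan is to estimate the number of $(F_n,\varepsilon)$-separated points of $E$ by pushing down to the factor $\pi(E)$ and controlling the number of preimages that remain separated within a single fiber direction. Let me fix $\varepsilon>0$ and choose $\delta>0$ as in the continuity argument above, so that $d(x_1,x_2)\le\delta$ forces $\rho(\pi x_1,\pi x_2)\le\varepsilon$. The rough idea: partition a maximal $(F_n,\delta)$-separated set of $E$ according to the $(F_n,\varepsilon)$-separated points of $\pi(E)$ lying ``nearby'' in $Y$, and show each cell cannot be too large because its points project close together in $Y$ and hence must be separated ``in the fiber''. The technical heart is to turn this local fiber separation into a uniform bound by $\sup_{y}h_{top}^{UC}(\pi^{-1}(y),\{F_n\})$.

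First I would set $a=\sup_{y\in Y}h_{top}^{UC}(\pi^{-1}(y),\{F_n\})$ and fix a small $\beta>0$; the goal is the bound $h^{UC}_{top}(E,\{F_n\})\le h^{UC}_{top}(\pi(E),\{F_n\})+a+(\text{error}\to 0)$. The naive approach—covering each fiber $\pi^{-1}(y)$ by Bowen balls along $F_n$ and lifting a spanning set of $\pi(E)$—fails for amenable groups because one cannot synchronize the ``good'' scales $n$ across the uncountably many fibers simultaneously, and a fiber that spans efficiently at scale $n$ need not do so along the \emph{same} Følner set used for $\pi(E)$. This is exactly where Bowen's original $\Z$-argument breaks and where the quasi-tiling machinery enters.

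The main step, and the principal obstacle, is the following. Having fixed a large $n$, one wants to build an efficient $(F_n,\varepsilon)$-spanning set of $E$ out of (i) an efficient spanning set of $\pi(E)$ along $F_n$ and (ii) fiberwise spanning sets. Because the fiber entropy bound $a$ only gives, for each $y$, \emph{some} scale $m(y)$ at which $\pi^{-1}(y)$ is $(F_{m(y)},\cdot)$-spanned by roughly $e^{a|F_{m(y)}|}$ balls, the scales $m(y)$ vary with $y$. The remedy is Lindenstrauss's covering lemma (Lemma \ref{lemma-4-1}): using the tempered hypothesis on $\{F_n\}$, one assembles the various translates $F_{m(y)}g$ of the fiber-scales into a $10\delta^{1/4}$-disjoint subfamily that covers a $(1-\delta^{1/4})$-fraction of the large Følner set $F_n$, and then multiplies the per-tile fiber counts. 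The arithmetic of this multiplication—summing $a|F_{m(y)}|$ over an almost-disjoint tiling whose total cardinality is $\ge(1-\delta^{1/4})|F_n|$—yields a contribution of at most $e^{(a+o(1))|F_n|}$, with the uncovered $\delta^{1/4}$-fraction absorbed into the error as $\delta\to 0$. Executing this cleanly, including the combinatorics of assembling the array $F_{i,j}$ and verifying Requirements 1 and 2 from temperedness and the Følner property, is the part I expect to consume most of the work.

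Finally I would combine the two factors: the number of $(F_n,\varepsilon)$-separated points of $E$ is at most $r_{F_n}(\pi(E),\delta/2)$ times the worst-case fiber spanning count, giving
$$
\frac{1}{|F_n|}\log s_{F_n}(E,\varepsilon)\le \frac{1}{|F_n|}\log r_{F_n}(\pi(E),\delta/2)+a+o(1).
$$
Taking $\limsup_{n\to\infty}$, then letting $\varepsilon\to 0$ (so $\delta\to 0$ and the quasi-tiling error $\delta^{1/4}\to 0$), the right-hand side converges to $h^{UC}_{top}(\pi(E),\{F_n\})+a$, which is \eqref{eq-4-1}. The growth condition \eqref{eq-1-1} is used to guarantee that $|F_n|\to\infty$ fast enough that the $o(1)$ error terms genuinely vanish and that the per-tile counts do not accumulate uncontrollably.
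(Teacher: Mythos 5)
Your proposal follows essentially the same route as the paper's proof: choose fiberwise spanning sets at $y$-dependent scales $m(y)$, assemble their translates into an almost-disjoint quasi-tiling of $F_n$ via Lindenstrauss's covering lemma (temperedness giving Requirement 1, the inductive choice of scales giving Requirement 2), multiply the per-tile fiber counts to cover the preimage of each $F_n$-Bowen ball in $Y$ by at most $e^{(a+o(1))|F_n|}$ Bowen balls in $X$, and combine with a minimal spanning set of $\pi(E)$. The one minor inaccuracy is your attribution of the vanishing error terms to the growth condition \eqref{eq-1-1}: that condition plays no role in the proof of this inequality (it is needed elsewhere, e.g.\ for Proposition \ref{prop-packing-uc}); here the errors vanish simply by letting $\delta\to 0$ and $\tau\to 0$.
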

\begin{proof}
If $\sup_{y\in Y}h_{top}^{UC}(\pi^{-1}(y),\{F_n\})=\infty$ then there is nothing to prove. So we assume that
$$a:=\sup_{y\in Y}h_{top}^{UC}(\pi^{-1}(y),\{F_n\})<\infty.$$

To verify \eqref{eq-4-1}, we need some preparations along the following three steps.

{\bf Step 1.} Construct $F_{i,j}$, the array of subsets of $G$.

Fix $\tau>0$. For any $\varepsilon>0$, let $0<\delta<\min\{\varepsilon,1/100\}$ be small enough.
Let $C>0$ be the constant in the tempered condition \eqref{tempered} for the F{\o}lner sequence $\{F_n\}$ and let $D=\{e_G\}\subset G$, where $e_G$ is the identity element of $G$.
Let $M>0$ be large enough as in Lemma \ref{lemma-4-1} corresponding to $\delta, C$ and $D$.

For each $y\in Y$, choose $m(y)\in \N$ such that for any $n\ge m(y)$,
\begin{align}\label{condition-m(y)}
  \frac{1}{|F_{n}|}\log r_{F_{n}}(\pi^{-1}(y),\varepsilon,d)\leq h_{top}^{UC}(\pi^{-1}(y),\{F_n\})+\tau \leq a+\tau .
\end{align}
Here recall that $r_{F_{n}}(\pi^{-1}(y),\varepsilon,d)$ denotes the smallest cardinality of $(F_{n}, \varepsilon)$-spanning sets for $\pi^{-1}(y)$.
Let $E_y$ be an $(F_{m(y)},\varepsilon)$-spanning set of $\pi^{-1}(y)$ with the smallest cardinality $|E_y|=r_{F_{m(y)}}(\pi^{-1}(y),\varepsilon,d).$ Denote
$$U_y=\{p\in X: \text{ there exists }q\in E_y \text{ such that }d_{F_{m(y)}}(p,q)<2\varepsilon\},$$
which is an open neighborhood of $\pi^{-1}(y)$. Since $\bigcap_{\gamma>0}\pi^{-1}(\overline{B(y,\gamma,\rho)})=\pi^{-1}(y)$, we have
$(X\setminus U_y)\bigcap \big (\bigcap_{\gamma>0}\pi^{-1}(\overline{B(y,\gamma,\rho)})\big )=\emptyset$. Hence
by the finite intersection property of compact sets, there is a $W_y:=B(y,\gamma_y,\rho)$ for some $\gamma_y>0$ such that $U_y\supset \pi^{-1}(W_y).$
Since $Y$ is compact, there exist $y_{1,1},\cdots, y_{1,r_1}$ such that $W_{y_{1,1}},\cdots,W_{y_{1,r_1}}$ cover $Y$.
List the F{\o}lner sets in the collection $\{F_{m(y_{1,k})}: 1\le k\le r_1\}$ by
$$F_{n_{1,1}}, F_{n_{1,2}},\cdots, F_{n_{1,N_1}}, \text{ where }n_{1,1}<n_{1,2}<\cdots<n_{1,N_1}.$$
Note that $N_1=\#\{F_{m(y_{1,k})}: 1\le k\le r_1\}\le r_1$.

For each $y\in Y$, choose $m(y)>n_{1,N_1}$ such that \eqref{condition-m(y)} holds for any $n\ge m(y)$. Repeat the above process, we can obtain
$y_{2,1},\cdots, y_{2,r_2}\in Y$ such that $W_{y_{2,1}},\cdots,W_{y_{2,r_2}}$ cover $Y$.
We then list the F{\o}lner sets in the collection $\{F_{m(y_{2,k})}: 1\le k\le r_2\}$ by
$$F_{n_{2,1}}, F_{n_{2,2}},\cdots, F_{n_{2,N_2}}, \text{ where }n_{2,1}<n_{2,2}<\cdots<n_{2,N_2}.$$
Note that $N_2=\#\{F_{m(y_{2,k})}: 1\le k\le r_2\}\le r_2$.

Repeating the above steps inductively, we can obtain for each $1\le i\le M$,
\begin{enumerate}
  \item a collection of points $y_{i,1},\cdots, y_{i,r_i}\in Y$ such that $W_{y_{i,1}},\cdots,W_{y_{i,r_i}}$ cover $Y$;
  \item a collection of F{\o}lner sets $\{F_{n_{i,1}}, F_{n_{2,2}},\cdots, F_{n_{i,N_i}}\}$($=\{F_{m(y_{i,k})}: 1\le k\le r_i\}$)
  with $n_{i,1}<n_{i,2}<\cdots<n_{i,N_i}$ and $N_i\le r_i$.
\end{enumerate}
From the above construction, for each $1\le i\le M-1$, $n_{i,N_i}<n_{i+1,1}$. Moreover, $n_{i+1,1}$ can be chosen sufficiently large compared with
$n_{i,N_i}$ such that for every $1<i\le M$ and every $1\le k \le N_i$,
  \begin{align}\label{condition-2 of lemma}
    |\bigcup_{i'<i}F_{n_{i'},*}^{-1}F_{n_i,k}|\le (1+\delta)|F_{n_i,k}|.
  \end{align}

For simplification, we denote $F_{i,j}=F_{n_{i,j}}$ for each $1\le i\le M$ and $1\le j\le N_i$, which is the array of $G$ we required.

{\bf Step 2.} Produce quasi-tilings from $F_{i,j}$.

Let $\eta_1$ be a common Lebesgue number of the family of open covers
$\{W_{y_{i,1}},\cdots,W_{y_{i,r_i}}\}$ with respect to the metric $\rho$. Denote $\eta=\frac{\eta_1}{2}$.

Let $N$ be large enough such that for every $n>N$, $F_n$ is
$(F_{i,*}\cup\{e_G\},\delta)$-invariant for all $1\le i\le M$.

For each $y\in Y$ and $n>N$, let
\begin{align*}
  A_{i,j}=&\{a\in F_n: F_{i,j}a\subset F_n \text{ and there exists }1\le k\le r_i \text{ such that } F_{m(y_{i,k})}=F_{i,j} \\
  &\qquad \qquad \text{ and }\overline{B}(ay,\eta,\rho)\subseteq W_{y_{i,k}} \}.
\end{align*}
We note here that $A_{i,j}$ depends on $y$.

For any $g\in F_n\setminus B(F_n,F_{i,*}\cup\{e_G\})$, we have $F_{i,*}g\subset F_n$. Since $\eta_1$($=2\eta$) is a Lebesgue number of
$\{W_{y_{i,1}},\cdots,W_{y_{i,r_i}}\}$, $\overline{B}(gy,\eta,\rho)$ is contained in some $W_{y_{i,k}}$ and then $g\in A_{i,*}$. Hence
$$A_{i,*}\supseteq F_n\setminus B(F_n,F_{i,*}\cup\{e_G\})$$
for each $1\le i \le M$, and
$$\alpha=\frac{\min_{1\le i\le M}|DA_{i,*}|}{|F_n|}=\frac{\min_{1\le i\le M}|A_{i,*}|}{|F_n|}>1-\delta.$$

Now we are able to apply Lemma \ref{lemma-4-1}: the temperedness assumption for $\{F_n\}$ makes {\bf Requirement 1} hold and
\eqref{condition-2 of lemma} makes {\bf Requirement 2} hold. From the collection of subsets of $F_n$,
  $$\tilde{\mathcal{F}}=\{F_{i,j}a: 1\le i\le M, 1\le j\le N_i \text{ and }a\in A_{i,j}\},$$
we can find by Lemma \ref{lemma-4-1} a subcollection $\mathcal{F}$ which is $10\delta^{1/4}$-disjoint and
\begin{align}\label{condition-F}
  |\cup \mathcal{F}|\ge (\alpha-\delta^{1/4})|F_n|\ge (1-\delta-\delta^{1/4})|F_n|.
\end{align}
In fact, this subcollection $\mathcal{F}$ is a $10\delta^{1/4}$ quasi-tiling of $F_n$ subordinate to $y\in Y$
(see for example \cite{KL} for the detail definition of quasi-tiling).

There may exist overlaps between elements in $\mathcal {F}$. Since $\mathcal{F}$ is $10\delta^{1/4}$-disjoint,
there exits $T'\subset T$ for each $T\in \mathcal {F}$ such that $|T'|/|T|\ge 1-10\delta^{1/4}$ and the collection $\{T': T\in \mathcal {F}\}$ is disjoint.
Denote this new collection by $\mathcal {F}'$.
By \eqref{condition-F},
\begin{align}\label{condition-F1}
  |\cup \mathcal{F}|\le\sum_{T\in\mathcal{F}}|T|\le \frac{1}{1-10\delta^{1/4}}\sum_{T\in\mathcal{F}}|T'|\le \frac{1}{1-10\delta^{1/4}}|F_n|
\end{align}
and
\begin{align}\label{condition-F2}
  |\cup\mathcal{F}'|= \sum_{T\in\mathcal{F}}|T'|\ge (1-10\delta^{1/4})(1-\delta-\delta^{1/4})|F_n|.
\end{align}

{\bf Step 3.} Cover $\pi^{-1}B_{F_n}(y,\eta,\rho)$ through $F_n$-Bowen balls in $(X,G)$.

{\bf Claim.} When $\delta$ is sufficiently small, for any $y\in Y$ and $n>N$, there exists $l(y)>0$ and $v_1(y),v_2(y),\cdots, v_{l(y)}(y)\in X$ such that
$$\bigcup_{i=1}^{l(y)}B_{F_n}(v_i(y),4\varepsilon,d)\supseteq \pi^{-1}(B_{F_n}(y,\eta,\rho))$$
and
$$l(y)\le\exp\big((a+2\tau)|F_n|\big ).$$

\begin{proof}[Proof of the Claim.]
For each $T=F_{i,j}a\in \mathcal{F}$, since $a\in A_{i,j}$, by the construction of $A_{i,j}$, there exists
some point in $\{y_{i,1}, y_{i,2}, \cdots, y_{i,r_i}\}$, denoted by $y_T$, such that $\overline{B}(ay,\eta,\rho)\subseteq W_{y_T}$ and $F_{m(y_T)}=F_{i,j}$.

In the following we will recover the $F_n$-orbits of $(X,G)$ from the $T$-orbits.

Let $E\subset X$ be any finite $\varepsilon$-spanning set under the metric $d$. For any sequence of points $\{z_T\}_{T\in\mathcal {F}}$ with each $z_T\in E_{y_{T}}$ and any sequence of points $\{z_g\}_{g\in F_n\setminus \cup\mathcal {F}'}$ with each $z_g\in E$, let
\begin{align*}
  V(y;\{z_T\}, \{z_g\}):=\{u\in X: d_{T'}(u,a^{-1}z_T)&<2\varepsilon \text{ for all }T=F_{i,j}a\in \mathcal{F}, \\
 &d(gu,z_g)<2\varepsilon \text{ for all }g\in F_n\setminus \cup\mathcal {F}'\}.
\end{align*}
It is not hard to verify that
\begin{align*}
\bigcup_{\{z_T\}, \{z_g\}}V(y;\{z_T\}, \{z_g\})\supseteq\pi^{-1}(B_{F_n}(y,\eta,\rho)),
\end{align*}
i.e. the family $\{V(y;\{z_T\}, \{z_g\}):z_T\in E_{y_{T}}, T\in\mathcal {F}, z_g\in E, g\in F_n\setminus \cup\mathcal {F}'\}$ forms an open cover of $\pi^{-1}(B_{F_n}(y,\eta,\rho)).$ We also note that some of $V(y;\{z_T\}, \{z_g\})$'s may be empty.

We pick any point $v(\{z_T\}, \{z_g\})$ in each non-empty $V(y;\{z_T\}, \{z_g\})$, then
$$B_{F_n}(v(\{z_T\}, \{z_g\}), 4\varepsilon, d)\supseteq V(y;\{z_T\}, \{z_g\}).$$
Enumerate these $v(\{z_T\}, \{z_g\})$'s by $y_1,y_2,\cdots,y_{l(y)}$. We then obtain
\begin{align}\label{4-2}
\bigcup_{i=1}^{l(y)}B_{F_n}(y_i, 4\varepsilon, d)\supseteq\pi^{-1}(B_{F_n}(y,\eta,\rho)).
\end{align}

Now the only thing left is to estimate $l(y)$. Clearly,
\begin{align*}
  l(y)&\le \prod_{T\in\mathcal{F}}|E_{y_T}|\cdot\prod_{g\in F_n\setminus \cup\mathcal {F}'}|E|
       = \prod_{T\in\mathcal{F}}r_{F_{m(y_T)}}(\pi^{-1}(y),\varepsilon,d)\cdot |E|^{|F_n|-|\cup\mathcal {F}'|}\\
      &\le \exp\bigg(\sum_{T\in\mathcal{F}}|F_{m(y_T)}|(a+\tau)+(|F_n|-|\cup\mathcal {F}'|)\log|E|\bigg)\\
      &\le \exp\bigg( \big(\frac{1}{1-10\delta^{1/4}}(a+\tau)+(1-(1-10\delta^{1/4})(1-\delta-\delta^{1/4}))\log|E|\big)|F_n|\bigg)\\
      &\qquad\qquad\qquad\qquad\qquad\qquad\qquad\qquad\qquad\qquad\qquad \text{ (by \eqref{condition-F1} and \eqref{condition-F2})}\\
      &\le \exp\big((a+2\tau)|F_n|\big )\qquad\qquad\text{ (when }\delta \text{ is sufficiently small). }
\end{align*}

\end{proof}

Now we proceed to prove \eqref{eq-4-1}.

For any subset $E$ of $X$, let $H$ be an $(F_n,\eta)$-spanning set of $\pi(E)$ with minimal cardinality
$r_{F_n}(\pi(E),\eta,\rho)$. Then by the above claim, the set $R=\{v_i(y):1\le i\le l(y), y\in H\}$ forms an $(F_n, 4\varepsilon)$-spanning set of $E$, since
\begin{align*}
  \bigcup_{y\in H}\bigcup_{i=1}^{l(y)}B_{F_n}(v_i(y),4\varepsilon,d)&\supseteq \bigcup_{y\in H}\pi^{-1}(B_{F_n}(y,\eta,\rho))\supseteq \pi^{-1}\pi(E)\supseteq E.
\end{align*}
Hence
\begin{align}\label{span-factor}
  r_{F_n}(E,4\varepsilon,d)\le r_{F_n}(\pi(E),\eta,\rho)\cdot\exp\big((a+2\tau)|F_n|\big ).
\end{align}
This deduces that
\begin{align*}
  h^{UC}_{top}( E,\{F_n\})\leq  h^{UC}_{top}(\pi (E),\{F_n\})+a+2\tau.
\end{align*}
Letting $\tau$ tend to $0$, \eqref{eq-4-1} is proved.
\end{proof}

\begin{proof}[Proof of the upper bound of $h^P_{top}( E,\{F_n\})$]\

What we need in fact is the inequality \eqref{span-factor} in the proof of Theorem \ref{thm-ineq-uc}.

From the fact \eqref{sep-span}, we firstly covert inequality \eqref{span-factor} into
\begin{align*}
  s_{F_n}(E,8\varepsilon,d)\le s_{F_n}(\pi(E),\eta,\rho)\cdot\exp\big((a+2\tau)|F_n|\big ).
\end{align*}
This implies that
\begin{align*}
  h^{UC}_{top}(E,8\varepsilon,\{F_n\}) \leq h^{UC}_{top}(\pi(E),\eta,\{F_n\})+a+2\tau.
\end{align*}

By Proposition \ref{packing-uc} (2) again, for any $\delta>0$, there exists a cover $\cup_{i=1}^{\infty}V_i=\pi(E)$ such that
$$h^P_{top}(\pi(E),\eta/3,\{F_n\})+\delta\ge \sup_{i\ge 1} h^{UC}_{top}(V_i,\eta,\{F_n\}).$$
Using the similar argument in the proof of the lower bound,
\begin{align*}
  h^P_{top}(E,8\varepsilon,\{F_n\})&\le\sup_{i\ge 1} h^{P}_{top}(\pi^{-1}(V_i),8\varepsilon,\{F_n\})\le\sup_{i\ge 1} h^{UC}_{top}(\pi^{-1}(V_i),8\varepsilon,\{F_n\})\\
                            &\le\sup_{i\ge 1} h^{UC}_{top}(V_i,\eta,\{F_n\})+a+2\tau\le h^P_{top}(\pi(E),\eta/3,\{F_n\})+\delta+a+2\tau.
\end{align*}
Hence
\begin{align*}
   h^{P}_{top}(E,\{F_n\}) \leq h^{P}_{top}(\pi(E),\{F_n\})+a+2\tau.
\end{align*}
Since $\tau>0$ is arbitrary, we finally obtain
\begin{align*}
   h^{P}_{top}(E,\{F_n\}) \leq h^{P}_{top}(\pi(E),\{F_n\})+\sup_{y\in Y}h_{top}^{UC}(\pi^{-1}(y),\{F_n\}).
\end{align*}
\end{proof}

\section{Amenable packing entropy for certain subsets}

\subsection{The set of generic points}
Recall that for $\mu\in M(X,G)$ and a F{\o}lner sequence $\{F_n\}$ in $G$, the set of generic points for $\mu$
(w.r.t. $\{F_n\}$) is defined by
\begin{align*}
  G_{\mu}=\{x\in X: \lim_{n\rightarrow+\infty}\frac{1}{|F_n|}\sum_{g\in F_n}f(gx)=\int_Xf d\mu, \text{ for any }f\in C(X)\}.
\end{align*}

\begin{remark}\label{remark-5-1}
  If $\mu\in E(X,G)$ and $\{F_n\}$ is a tempered F{\o}lner sequence then $\mu(G_{\mu})=1$. To show this, let $\{f_i\}_{i=1}^{\infty}$ be a countable dense subset of $C(X)$ and denote by
  $$X_i=\{x\in X: \lim_{n\rightarrow+\infty}\frac{1}{|F_n|}\sum_{g\in F_n}f_i(gx)=\int_Xf_i d\mu\}.$$
  By the pointwise ergodic theorem, $\mu(X_i)=1$. Hence $G_{\mu}=\cap_{i=1}^\infty X_i$ has full measure.
\end{remark}

The system $(X,G)$ is said to have {\it almost specification} property if there exists a non-decreasing function $g: (0,1)\rightarrow (0,1)$ with $\lim_{r\rightarrow 0}g(r)=0$ (a mistake-density function) and a map $m: (0,1)\rightarrow F(G)\times (0,1)$ such that for any $k\in \N$, any
$\varepsilon_1, \varepsilon_2,\cdots, \varepsilon_k\in (0,1)$, and any $x_1, x_2,\cdots,x_k\in X$, if $F_i$ is $m(\varepsilon_i)$-invariant, $i=1, 2,\cdots, k$
and $\{F_i\}_{i=1}^k$ are pairwise disjoint, then
$$\bigcap_{1\le i\le m}B(g; F_i,x_i,\varepsilon_i)\neq\emptyset,$$
where $B(g; F,x,\varepsilon):=\big\{y\in X: |\{h\in F: d(hx,hy)>\varepsilon\}|\le g(\varepsilon)|F|\big\}$, the Bowen ball allowing a mistake with density $g(\varepsilon)$.

\begin{remark}
It was shown in \cite{Zh} that the weak specification implies the almost specification.
Recall that in \cite{CL} the system $(X,G)$ (the group $G$ need not to be amenable) has {\it weak specification} if for any $\varepsilon>0$ there exists a nonempty finite subset $F$ of $G$
with the following property: for any finite collection $F_1, \cdots, F_m$ of finite subsets $G$ with
$$FF_i\cap F_j=\emptyset \text{ for } 1\le i,j\le m,i\neq j,$$
and for any collection of points $x_1,\cdots,x_m \in X$, there exists a point $y\in X$ such that
$$d(gx_i,gy)\le \varepsilon \text{ for all } g\in F_i, 1\le i\le m,$$
i.e.
$$\bigcap_{1\le i\le m}\overline{B}_{F_i}(x_i,\varepsilon)\neq\emptyset.$$
\end{remark}

In this section, we will prove Theorem \ref{th-generic}, i.e. if $\mu\in M(X,G)$, the F{\o}lner sequence $\{F_n\}$ satisfies the growth condition \eqref{eq-1-1} and
either $\mu$ is ergodic and $\{F_n\}$ is tempered or $(X,G)$ has almost specification, then
\begin{align}\label{5-1}
  h^P_{top}(G_{\mu}, \{F_n\})=h_{\mu}(X,G).
\end{align}

The idea of the proof comes from Pfister and Sullivan \cite{PS} (see also \cite{ZC2,Zh} for amenable group actions).

\subsubsection{Upper bound for $h_{top}^P(G_{\mu},\{F_n\})$}
\

In the following we are going to prove $h_{top}^P(G_{\mu},\{F_n\})\leq h_{\mu}(X,G)$ assuming that the F{\o}lner sequence $\{F_n\}$ satisfies the growth condition \eqref{eq-1-1}.

For $\mu\in M(X,G)$, let $\{K_m\}_{m\in\N}$ be a decreasing sequence of closed convex neighborhoods of $\mu$ in $M(X)$ such that $\bigcap_{m\in\N}K_m=\{\mu\}$. Let
$$A_{n,m}=\{x\in X:\frac{1}{|F_n|}\sum_{g\in F_n}\delta_x\circ g^{-1}\in K_m\}, \text{ for }m,n\in\N,$$
and
$$R_{N,m}=\{x\in X:\text{ for any }n>N, \frac{1}{|F_n|}\sum_{g\in F_n}\delta_x\circ g^{-1}\in K_m\}, \text{ for }m,N\in\N.$$
Then for any $m,N\ge 1$,
$$R_{N,m}=\bigcap_{n>N}A_{n,m} \text{ and } G_{\mu}\subseteq \bigcup_{k> N}R_{k,m}.$$

For $\varepsilon>0$ and $Z\subseteq X$, recall that $s_{F_n}(Z,\varepsilon)$ denotes the maximal cardinality of any $(F_n,\varepsilon)$-separated subset of $Z$.
Then we have
\begin{align}\label{5-2}
 \limsup_{n\rightarrow +\infty}\frac{1}{|F_n|}\log s_{F_n}(R_{N,m},\varepsilon)
  \le \limsup_{n\rightarrow +\infty}\frac{1}{|F_n|}\log s_{F_n}(A_{n,m},\varepsilon), \text{ for any }m,N\ge 1.
\end{align}

By the claim in \cite[page 878]{ZC2} (we note that it also works for non-ergodic invariant measures),
\begin{align*}
\lim_{\varepsilon\rightarrow 0}\lim_{m\rightarrow+\infty}\limsup_{n\rightarrow+\infty}\frac{1}{|F_n|}\log s_{F_n}(A_{n,m},\varepsilon)\le h_{\mu}(X,G).
\end{align*}

Hence for any $\eta>0$, there exists $0<\varepsilon_1$ such that for any $0<\varepsilon<\varepsilon_1$, there
exists $M=M(\varepsilon)\in \N$ such that,
\begin{align*}
  \limsup_{n\rightarrow+\infty}\frac{1}{|F_n|}\log s_{F_n}(A_{n,m},\varepsilon)< h_{\mu}(X,G)+\eta,
\end{align*}
whenever $m\ge M$. Especially,
\begin{align}\label{5-3}
  \limsup_{n\rightarrow+\infty}\frac{1}{|F_n|}\log s_{F_n}(A_{n,M},\varepsilon)< h_{\mu}(X,G)+\eta.
\end{align}

Joint \eqref{5-2} and \eqref{5-3} together, for any $0<\varepsilon<\varepsilon_1$, we have that for any $N\in\N$,
$$\limsup_{n\rightarrow +\infty}\frac{1}{|F_n|}\log s_{F_n}(R_{N,M},\varepsilon)< h_{\mu}(X,G)+\eta.$$
Since for any $N'\in \N$, $G_{\mu}\subseteq \bigcup_{N\ge N'}R_{N,M}$, by Propositions \ref{prop-basic} and \ref{prop-packing-uc},
$$h_{top}^P(G_{\mu},\varepsilon,\{F_n\})\le \sup_{N\ge N'}h_{top}^P(R_{N,M},\varepsilon,\{F_n\})\le \sup_{N\ge N'}\limsup_{n\rightarrow+\infty}\frac{1}{|F_n|}\log s_{F_n}(R_{N,M},\varepsilon),$$
which follows that
$$h_{top}^P(G_{\mu},\varepsilon,\{F_n\})\le h_{\mu}(X,G)+\eta.$$
Letting $\varepsilon\rightarrow 0$ and then $\eta\rightarrow 0$, we obtain that $h_{top}^P(G_{\mu},\{F_n\})\le h_{\mu}(X,G)$.

\subsubsection{Lower bound for $h_{top}^P(G_{\mu},\{F_n\})$}
\

For the case $\mu$ is ergodic and $\{F_n\}$ is tempered, since $\mu(G_{\mu})=1$, Corollary \ref{coro-packing} gives the lower bound.

For the case $\mu\in M(X,G)$ and the system $(X,G)$ has almost specification property, the proof of the lower bound becomes rather complicated because of the
quasi-tiling techniques for amenable groups. But it was shown in \cite{Zh} that
$h_{top}^B(G_{\mu},\{F_n\})= h_{\mu}(X,G)$. Hence by Proposition \ref{prop-packing-Bowen},
we obtain $h_{top}^P(G_{\mu},\{F_n\})\ge h_{\mu}(X,G)$.

\subsection{$G$-symbolic dynamical system}\

Let $A$ be a finite set with cardinality $|A|\ge 2$ and $A^G=\{(x_g)_{g\in G}: x_g\in A\}$ be the $G$-symbolic space over $A$. Consider the left action of $G$ on $A^G$:
$$g'(x_g)_{g\in G}=(x_{gg'})_{g\in G}, \text{ for all }g'\in G\text{ and }(x_g)_{g\in G}\in A^G.$$
$(A^G,G)$ forms a $G$-symbolic dynamical system or a $G$-acting full shift (over $A$). For any non-empty closed $G$-invariant subset $X$ of $A^G$, the subsystem $(X,G)$ is called a subshift. For $x=(x_g)_{g\in G}$ and a finite subset $F\subset G$, denote by $x|_F=(x_g)_{g\in F}\in A^F$ the restriction of $x$ to $F$ and
denote by $[x|_F]=\{\omega\in A^G: \omega_g=x_g \text{ for all }g\in F\}$ (which is called a {\it cylinder}).

Fix any tempered F{\o}lner sequence $\{F_n\}$ of $G$ with $F_0=\{e_G\}\subsetneq F_1\subsetneq F_2\subsetneq\ldots$
and $\bigcup_{n}F_n=G$. Note that $\{F_n\}$ satisfies the growth condition \eqref{eq-1-1} automatically. We can then define a metric $d$ on $A^G$ associated to $\{F_n\}$ by the following:
\begin{align}\label{metric}
  d(x,y)=\begin{cases}
    1,\qquad\qquad \text{ if }x \text{ and }y \text{ are not equal on } F_0;\\
    e^{-|F_n|}, \qquad n=\max\{k: x|_{F_k}=y|_{F_k}\}.
  \end{cases}
\end{align}

To discuss the regularity for subsets of $(A^G,G)$, we need to consider the relation between Bowen entropy (packing entropy) and the corresponding Hausdorff dimension (packing dimension). Before that, we recall the definitions of Hausdorff dimension and packing dimension (cf. \cite{M}).

\begin{definition}
  Let $(X,d)$ be a compact metric space. Let $0\le s<\infty$. For $E\subset X$ and $\varepsilon>0$, put
  $$P_{\varepsilon}^s(E)=\sup\sum_i({\rm diam} B_i)^s$$
  where the supremum is taken over all disjoint families of closed balls $\{\overline B_i\}$ such that ${\rm diam} B_i \le \varepsilon$ and the centres of the $B_i$'s are in $E$.

  Then set $P^s(E)=\lim\limits_{\varepsilon\rightarrow 0}P_{\varepsilon}^s(E)$ (since $P_{\varepsilon}^s(E)$ is non-decreasing on $\varepsilon$) and
  define $$\mathcal P^s(E)=\inf \{\sum_{i=1}^{\infty}P^s(E_i): E=\bigcup_{i=1}^{\infty}E_i\}.$$
  The {\it packing dimension} of $E$ is then defined by
  $${\rm dim}_P(E)=\inf\{s: \mathcal P^s(E)=0\}=\sup\{s: \mathcal P^s(E)=\infty\}.$$

  Let $\mathcal H^s(E)=\lim\limits_{\varepsilon\rightarrow 0}\mathcal H_{\varepsilon}^s(E)$, where
  $$\mathcal H_{\varepsilon}^s(E)=\inf\{\sum_{i=1}^\infty({\rm diam} E_i)^s: E\subset \bigcup_{i=1}^\infty E_i, {\rm diam} E_i\le \varepsilon\}.$$
  The {\it Hausdorff dimension} of $E$ is then defined by
  $${\rm dim}_H(E)=\inf\{s: \mathcal H^s(E)=0\}=\sup\{s: \mathcal H^s(E)=\infty\}.$$

\end{definition}

Comparing with the definitions of dimensional entropies, we have the following proposition.
\begin{proposition}\label{prop-5-4}
Let the F{\o}lner sequence $\{F_n\}$ satisfy the following two conditions:
\begin{enumerate}
  \item $F_mF_n\subset F_{m+n}$ for each $m,n\in\N$;
  \item $\lim\limits_{n\rightarrow\infty} \frac{F_{n+1}}{{F_n}}=1$.
\end{enumerate}
Then for any $E\subset A^G$,
\begin{align*}
 h_{top}^B(E,\{F_n\})={\rm dim}_HE\text{ and } h_{top}^P(E,\{F_n\})={\rm dim}_PE,
\end{align*}
where the dimensions ${\rm dim}_H$ and ${\rm dim}_P$ are both under the metric $d$ defined in \eqref{metric}.
\end{proposition}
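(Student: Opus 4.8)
The plan is to exploit the fact that, under conditions (1) and (2), the metric $d$ is an ultrametric for which both Bowen balls and metric balls are cylinders, so that the entropy and the dimension constructions run in parallel and differ only in the bookkeeping of the weights.

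First I would record the geometric dictionary. Since $d(x,y)=e^{-|F_k|}$ with $k$ the largest index on which $x$ and $y$ agree, $d$ is an ultrametric, the closed metric ball $\overline B(x,r)$ is exactly a cylinder $[x|_{F_k}]$, and ${\rm diam}\,[x|_{F_k}]=e^{-|F_k|}$. A direct computation using $(gx)_h=x_{hg}$ gives, for $\varepsilon=e^{-|F_m|}$, that $\overline B_{F_n}(x,\varepsilon)=[x|_{F_mF_n}]$ and $B_{F_n}(x,\varepsilon)=[x|_{F_{m+1}F_n}]$; hence, by condition (1) together with $F_0=\{e_G\}$,
$$[x|_{F_{m+n}}]\subseteq \overline B_{F_n}(x,\varepsilon)\subseteq [x|_{F_n}].$$
In particular $\overline B_{F_n}(x,e^{-1})=[x|_{F_n}]$, and for any fixed $\varepsilon_0\in(e^{-1},1)$ one has $B_{F_n}(x,\varepsilon_0)=[x|_{F_n}]$. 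I would also record two facts used throughout: condition (2) yields $|F_{m+n}|/|F_n|\to 1$ as $n\to\infty$ for each fixed $m$; and, since $d$ is an ultrametric, every set of diameter $r$ lies in a cylinder of diameter $\le r$, so ${\rm dim}_H$ may be computed using covers by cylinders only (the packing construction already uses closed balls, i.e. cylinders).

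Next I would settle the two ``coarse scale'' inequalities, which in fact hold as exact identities. Because $\overline B_{F_n}(x,e^{-1})=[x|_{F_n}]$ carries entropy weight $e^{-s|F_n|}=({\rm diam}\,[x|_{F_n}])^s$, the packing families at scale $e^{-1}$ are literally the cylinder packings of the dimension theory, with matching weights and matching size constraints ($n_i\ge N\Leftrightarrow {\rm diam}\le e^{-|F_N|}$); hence $P(E,N,e^{-1},s,\{F_n\})=P^s_{e^{-|F_N|}}(E)$, so $\mathcal P(E,e^{-1},s,\{F_n\})=\mathcal P^s(E)$ and $h^P_{top}(E,e^{-1},\{F_n\})={\rm dim}_P E$. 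Since $h^P_{top}(E,\varepsilon,\{F_n\})$ increases as $\varepsilon$ decreases, this gives $h^P_{top}(E,\{F_n\})\ge {\rm dim}_P E$. The same computation with $B_{F_n}(x,\varepsilon_0)=[x|_{F_n}]$ gives $\mathcal M(E,\varepsilon_0,s,\{F_n\})=\mathcal H^s(E)$ and hence $h^B_{top}(E,\{F_n\})\ge{\rm dim}_H E$.

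Finally I would prove the reverse ``fine scale'' inequalities, where the weight--diameter mismatch must be absorbed. Fix $\varepsilon=e^{-|F_m|}$. For packing, a disjoint family $\{\overline B_{F_{n_i}}(x_i,\varepsilon)\}=\{[x_i|_{F_mF_{n_i}}]\}$ is a cylinder packing with diameters $e^{-|F_mF_{n_i}|}$; using $|F_{n_i}|\le |F_mF_{n_i}|\le |F_{m+n_i}|\le\theta|F_{n_i}|$ (valid for large $n_i$ and any fixed $\theta>1$ by condition (2)), and the elementary bound $(2-\theta)\theta<1$, one checks $e^{-s|F_{n_i}|}\le ({\rm diam})^{\,s(2-\theta)}$, whence $\mathcal P(E,\varepsilon,s,\{F_n\})\le\mathcal P^{\,s(2-\theta)}(E)$; choosing $s(2-\theta)$ just above ${\rm dim}_P E$ and letting $\theta\to1^+$ then $\varepsilon\to0$ yields $h^P_{top}(E,\{F_n\})\le{\rm dim}_P E$. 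For Bowen, given a near-optimal cylinder cover $\{[x_i|_{F_{l_i}}]\}$ of $E$ witnessing $\mathcal H^u(E)=0$ for some $u>{\rm dim}_H E$, I replace each cylinder by the open Bowen ball $B_{F_{l_i-m-1}}(x_i,\varepsilon)=[x_i|_{F_{m+1}F_{l_i-m-1}}]\supseteq[x_i|_{F_{l_i}}]$ (using $F_{m+1}F_{l_i-m-1}\subseteq F_{l_i}$ from condition (1)); since $|F_{l_i}|\le\theta|F_{l_i-m-1}|$ for large $l_i$, the entropy weight satisfies $e^{-s|F_{l_i-m-1}|}\le e^{-(s/\theta)|F_{l_i}|}=({\rm diam})^{\,s/\theta}$, so with $s=\theta u$ the resulting cover forces $\mathcal M(E,\varepsilon,s,\{F_n\})=0$; letting $\theta\to1^+$, $u\to{\rm dim}_H E^+$, and $\varepsilon\to0$ gives $h^B_{top}(E,\{F_n\})\le{\rm dim}_H E$. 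The main obstacle is precisely this last matching: the entropy weight is indexed by the ``time'' $|F_n|$ while the dimension weight is indexed by the genuine cylinder size $|F_mF_n|$, and reconciling the two at the level of the critical exponent is exactly what condition (2), through $|F_{m+n}|/|F_n|\to1$, is designed to permit, while condition (1) is what guarantees the clean cylinder sandwich in the first place.
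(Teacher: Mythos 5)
Your proposal is correct and follows essentially the same route as the paper's own proof in Appendix A.1: both arguments rest on the same dictionary, namely that Bowen balls at radius near $1$ are exactly cylinders (equivalently metric balls) with matching weights $e^{-s|F_n|}=({\rm diam})^s$ (the paper's \eqref{A-1}), which gives $h^B_{top}\ge{\rm dim}_H$ and $h^P_{top}\ge{\rm dim}_P$, while the reverse inequalities come from the condition-(1) sandwich $[x|_{F_{m+n}}]\subseteq \overline B_{F_n}(x,e^{-|F_m|})=[x|_{F_mF_n}]\subseteq[x|_{F_n}]$ combined with the condition-(2) exponent reconciliation $|F_{m+n}|/|F_n|\to 1$ (the paper's \eqref{A-3} together with its $\frac{s}{1+\eta}$ adjustment, which your $\theta$ and $2-\theta$ factors reproduce). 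The one imprecision is that in the fine-scale packing step the sets $[x_i|_{F_mF_{n_i}}]$ are cylinders but in general not metric balls, so they cannot be fed directly into $\mathcal P^{\,s(2-\theta)}$ as a packing family; replacing each by the inscribed closed ball $[x_i|_{F_{m+n_i}}]$ (still pairwise disjoint and centered in $E$) repairs this at no cost, since your own inequality $(2-\theta)|F_{m+n_i}|\le(2-\theta)\theta|F_{n_i}|\le|F_{n_i}|$ gives $e^{-s|F_{n_i}|}\le\bigl(e^{-|F_{m+n_i}|}\bigr)^{s(2-\theta)}$.
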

\begin{proof}
  See Appendix A.1.
\end{proof}
\begin{remark}\begin{enumerate}
  \item Due to \cite{HSt}, the sequence of finite subsets $\{F_n\}$ of $G$ satisfying condition (1) in Proposition \ref{prop-5-4} is called
  a {\it regular system}. If $G$ is a finitely generated group and let $F_n$ be the collection of elements in $G$ with word length (w.r.t. a finite
  symmetric generating subset) no more than $n$,
  then $\{F_n\}$ satisfies condition (1).
  \item There are examples of amenable groups which admit F{\o}lner sequences satisfying conditions in Proposition \ref{prop-5-4}.
  An abelian example of the group $G$ is $\Z^d$, with $F_n=[-n,n]^d$. An non-abelian example of the group $G$ is the
   dihedral group, with $F_n$ which is chosen to be the collection of elements with word length no more than $n$ (see Example 6.4.11 of \cite{CC}).
\end{enumerate}
\end{remark}
By Proposition \ref{prop-5-4}, we have
\begin{proposition}
  Let the F{\o}lner sequence $\{F_n\}$ satisfy the conditions in Proposition \ref{prop-5-4}. Then any subset $E\subset A^G$ is regular in the sense of dimensional entropy if and only if $E$ is dimension-regular (under the F{\o}lner sequence $\{F_n\}$ and metric $d$).
\end{proposition}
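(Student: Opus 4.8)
The plan is to read the statement off directly from Proposition \ref{prop-5-4}, since all the genuine work has already been done there. First I would unwind the two notions of regularity in play. By Definition \ref{def-regular}, saying that $E$ is \emph{regular in the sense of dimensional entropy} means precisely that $h_{top}^B(E,\{F_n\})=h_{top}^P(E,\{F_n\})$. On the other hand, saying that $E$ is \emph{dimension-regular} (in Tricot's sense, under the metric $d$ defined in \eqref{metric}) means precisely that ${\rm dim}_H E={\rm dim}_P E$.

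Next I would invoke the two identities supplied by Proposition \ref{prop-5-4}, which hold under conditions (1) and (2) imposed there, namely
$$h_{top}^B(E,\{F_n\})={\rm dim}_H E \qquad\text{and}\qquad h_{top}^P(E,\{F_n\})={\rm dim}_P E.$$
Substituting these into the first characterization, the equality $h_{top}^B(E,\{F_n\})=h_{top}^P(E,\{F_n\})$ is seen to hold if and only if ${\rm dim}_H E={\rm dim}_P E$, which is exactly the second characterization. Chaining these equivalences yields the proposition.

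The hard part is therefore not in this argument at all: the entire analytic content lies in Proposition \ref{prop-5-4} (proved in Appendix A.1), where the dimensional entropies are matched with the corresponding fractal dimensions. The only point that warrants a moment's care here is bookkeeping, namely confirming that the two equalities of Proposition \ref{prop-5-4} are stated for the very same F{\o}lner sequence $\{F_n\}$ and the very same metric $d$ that enter both regularity notions, so that the resulting equivalence is literal rather than merely up to the choices involved. Since conditions (1) and (2) are assumed throughout as standing hypotheses, this matching is immediate, and the proof reduces to the one-line substitution above.
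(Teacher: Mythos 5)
Your proposal is correct and is essentially identical to the paper's own treatment: the paper states this proposition with the single phrase ``By Proposition \ref{prop-5-4}, we have,'' i.e.\ it too regards the result as the immediate substitution of the identities $h_{top}^B(E,\{F_n\})={\rm dim}_H E$ and $h_{top}^P(E,\{F_n\})={\rm dim}_P E$ into the two definitions of regularity. Your added bookkeeping remark (that both identities refer to the same $\{F_n\}$ and the same metric $d$) is the right point to check and is indeed satisfied, so nothing is missing.
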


By Remark \ref{remark-2.6}, any non-empty closed $G$-invariant subset $X$ of $A^G$ is regular in the sense of dimensional entropy. Hence we have
\begin{corollary}
Let the F{\o}lner sequence $\{F_n\}$ satisfy the conditions in Proposition \ref{prop-5-4}. Then any non-empty closed $G$-invariant subset $X$ of $A^G$ is both regular in the sense of dimensional entropy and dimension-regular (under the F{\o}lner sequence $\{F_n\}$ and metric $d$).
\end{corollary}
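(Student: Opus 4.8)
The plan is to combine the two immediately preceding results, of which this corollary is a direct consequence. First I would note that since $A^G$ is compact and $X$ is closed, $X$ is a compact $G$-invariant subset of $A^G$. Under the standing assumption of this subsection that $\{F_n\}$ is a tempered F{\o}lner sequence---which, as already remarked, satisfies the growth condition \eqref{eq-1-1} automatically---part (2) of Remark \ref{remark-2.6} applies to $X$ and gives
$$h_{top}^B(X,\{F_n\})=h_{top}(X,G)=h_{top}^P(X,\{F_n\}).$$
By Definition \ref{def-regular}, this says precisely that $X$ is regular in the sense of dimensional entropy, which is the first of the two conclusions.

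Next I would invoke the proposition immediately preceding this corollary, which asserts that---when $\{F_n\}$ satisfies conditions (1) and (2) of Proposition \ref{prop-5-4}---a subset of $A^G$ is regular in the sense of dimensional entropy if and only if it is dimension-regular (with respect to $\{F_n\}$ and the metric $d$ from \eqref{metric}). Applying this equivalence to $X$, the regularity established in the first step transfers to dimension-regularity, i.e. ${\rm dim}_H X={\rm dim}_P X$. This yields the second conclusion and completes the argument.

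There is no genuine obstacle here: the substantive content resides entirely in the earlier results, namely the coincidence $h_{top}^B=h_{top}=h_{top}^P$ on compact invariant sets (Remark \ref{remark-2.6}) and the identification of the two dimensional entropies with the Hausdorff and packing dimensions (Proposition \ref{prop-5-4}). The only point deserving a moment's care is checking that the hypotheses line up: that $X$ is compact so that Remark \ref{remark-2.6}(2) is applicable, and that the chosen F{\o}lner sequence simultaneously meets both the temperedness and growth requirements underlying Remark \ref{remark-2.6} and the structural conditions (1)--(2) of Proposition \ref{prop-5-4}.
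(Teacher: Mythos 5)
Your proof is correct and follows exactly the paper's (implicit) argument: the paper derives the corollary by citing Remark \ref{remark-2.6}(2) for regularity in the sense of dimensional entropy of the compact $G$-invariant set $X$, and then applies the preceding proposition (the equivalence of the two notions of regularity, via Proposition \ref{prop-5-4}) to conclude dimension-regularity. Your added care in verifying that the standing assumptions of Section 5.2 (temperedness, the automatic growth condition, compactness of $X$) make both cited results applicable is exactly the right check and matches the paper's setup.
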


Let $0\le \alpha < \beta\le 1$ and $A=\{0,1\}$. Let $H\subset G$ such that
\begin{align*}
  \liminf_{n\rightarrow+\infty}\frac{|H\cap F_n|}{|F_n|}=\alpha \text{ and }\limsup_{n\rightarrow+\infty}\frac{|H\cap F_n|}{|F_n|}=\beta,
\end{align*}
i.e. $H$ is a subset of $G$ with lower density $\alpha$ and upper density $\beta$ w.r.t. $\{F_n\}$. Now we define $X_{\alpha,\beta}\subset \{0,1\}^G$ by
\begin{align*}
  X_{\alpha,\beta}=\{(x_g)_{g\in G}: x_g=0 \text{ if }g\notin H\}.
\end{align*}
Assume in addition that $\{F_n\}$ satisfies the conditions in Proposition \ref{prop-5-4}, then we have
\begin{proposition}\label{prop-5-8}
  $h_{top}^B (X_{\alpha,\beta},\{F_n\})=\alpha\log 2$ and $h_{top}^P(X_{\alpha,\beta},\{F_n\})=\beta\log 2$. Hence $X_{\alpha,\beta}$ is not regular.
\end{proposition}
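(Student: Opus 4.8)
The plan is to read off both dimensional entropies of $X_{\alpha,\beta}$ from a single well-chosen measure together with the combinatorial structure of the shift. The natural candidate is the product measure $\mu=\bigotimes_{g\in G}\nu_g$ on $\{0,1\}^G$, where $\nu_g$ is the fair Bernoulli measure on $\{0,1\}$ for $g\in H$ and $\nu_g=\delta_0$ for $g\notin H$. By construction $\mu$ is supported on $X_{\alpha,\beta}$ (which is closed, hence Borel), so $\mu(X_{\alpha,\beta})=1$; note that $\mu$ need not be $G$-invariant, since $H$ is an arbitrary density set. First I would record the key fact that in the metric $d$ of \eqref{metric} with $F_0=\{e_G\}$ the Bowen balls are cylinders: unwinding $d_{F_n}(x,y)=\max_{g\in F_n}d(gx,gy)$ shows that $d_{F_n}(x,y)\le e^{-|F_m|}$ iff $x$ and $y$ agree on $F_mF_n$, so for $\varepsilon$ in the range corresponding to level $m$ one has $B_{F_n}(x,\varepsilon)=[x|_{F_mF_n}]$, and $B_{F_n}(x,\varepsilon)=[x|_{F_n}]$ for $\varepsilon$ close to $1$. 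Consequently $\mu(B_{F_n}(x,\varepsilon))=2^{-|F_mF_n\cap H|}$ for $\mu$-a.e.\ $x$, whence $-\tfrac{1}{|F_n|}\log\mu(B_{F_n}(x,\varepsilon))=\tfrac{|F_mF_n\cap H|}{|F_n|}\log 2$.

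The heart of the argument is then to evaluate $\limsup_n$ and $\liminf_n$ of $\tfrac{|F_mF_n\cap H|}{|F_n|}$ for each fixed $m$. Using $F_n\subseteq F_mF_n\subseteq F_{m+n}$ (from $e_G\in F_m$ and condition (1) of Proposition \ref{prop-5-4}) together with $|F_{m+n}|/|F_n|\to 1$ (iterating condition (2)), these ratios are squeezed between the corresponding $\limsup$ and $\liminf$ of $\tfrac{|F_k\cap H|}{|F_k|}$, giving $\limsup_n\tfrac{|F_mF_n\cap H|}{|F_n|}=\beta$ and $\liminf_n\tfrac{|F_mF_n\cap H|}{|F_n|}=\alpha$ for every $m$. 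Letting $\varepsilon\to 0$ I thus obtain $\overline h_\mu^{loc}(X_{\alpha,\beta},\{F_n\})=\beta\log 2$ and $\underline h_\mu^{loc}(X_{\alpha,\beta},\{F_n\})=\alpha\log 2$.

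With these in hand the four bounds follow. For the packing entropy, the lower bound $h^P_{top}(X_{\alpha,\beta},\{F_n\})\ge\beta\log 2$ is immediate from the variational principle (Theorem \ref{th-viriational-pack}) applied to $\mu$, while the upper bound follows by computing $h^{UC}_{top}(X_{\alpha,\beta},\{F_n\})=\beta\log 2$ (a maximal $(F_n,\varepsilon)$-separated set of $X_{\alpha,\beta}$ has $2^{|F_mF_n\cap H|}$ points, with exponential growth rate $\beta\log 2$ by the previous paragraph) and invoking Proposition \ref{prop-packing-uc}. For the Bowen entropy, the upper bound $h^B_{top}(X_{\alpha,\beta},\{F_n\})\le\alpha\log 2$ comes from covering $X_{\alpha,\beta}$ by the $2^{|F_mF_n\cap H|}$ Bowen balls $B_{F_n}(x,\varepsilon)=[x|_{F_mF_n}]$, which forces $\mathcal M(X_{\alpha,\beta},\varepsilon,s)\le\liminf_n 2^{|F_mF_n\cap H|}e^{-s|F_n|}=0$ for every $s>\alpha\log 2$ (again using $\liminf_n\tfrac{|F_mF_n\cap H|}{|F_n|}=\alpha$); the lower bound $h^B_{top}(X_{\alpha,\beta},\{F_n\})\ge\alpha\log 2$ follows from $\underline h_\mu^{loc}(X_{\alpha,\beta},\{F_n\})=\alpha\log 2$ together with the amenable Bowen variational principle of \cite{ZC2} (equivalently, after passing to dimensions via Proposition \ref{prop-5-4}, from the mass distribution principle of \cite{M} applied to $\mu$). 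Since $\alpha<\beta$, the two entropies differ and $X_{\alpha,\beta}$ is not regular.

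I expect the main obstacle to be the Bowen lower bound: because $\mu$ is not $G$-invariant, the invariant-measure estimates used elsewhere in this section do not apply, so one must either appeal to the full (non-invariant) variational principle for $\underline h_\mu^{loc}$ or argue directly via a Billingsley-type mass distribution estimate, in both cases paying attention to the passage $\varepsilon\to 0$ and to the ratio $|F_{m+n}|/|F_n|\to 1$, which is exactly what renders the level-$m$ refinements harmless. The separated-set count and the covering estimate are then routine once the identification of Bowen balls with cylinders is in place.
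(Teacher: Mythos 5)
Your proposal is correct and is essentially the paper's own argument: the same measure (fair Bernoulli on the $H$-coordinates, $\delta_0$ elsewhere), the same identification of Bowen balls with cylinders, the two variational principles (Theorem \ref{th-viriational-pack} for packing, and the Bowen analogue valid for non-invariant measures) for the lower bounds, and the same cylinder count giving $h^{UC}_{top}(X_{\alpha,\beta},\{F_n\})\le\beta\log 2$ for the packing upper bound. The only cosmetic differences are that you bound $\mathcal M(X_{\alpha,\beta},\varepsilon,s)$ directly by covers of a single level $n$ along the subsequence realizing the liminf, where the paper instead passes through ${\rm dim}_H$ via Proposition \ref{prop-5-4}, that you count separated rather than spanning sets, and that the non-invariant Bowen variational principle you invoke is Theorem 3.1 of \cite{ZC}, not \cite{ZC2}.
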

\begin{proof}
  See Appendix A.2.
\end{proof}
\subsection{Fibers of $\{T,T^{-1}\}$ transformation}\

Random walk in random scenery (RWRS) is a class of stationary random processes which are well-studied both in probability theory and ergodic theory. RWRS provides measure-theoretic models with amazingly rich behavior (see \cite{HS,A}). Among the class of RWRS, $\{T,T^{-1}\}$ transformation, although seems simple,
is possibly the best known in the history of ergodic theory since it is a natural example of a $K$-automorphism that is not Bernoulli (\cite{K}). In spite of its measure-theoretic aspect, we will consider the topological model of the $\{T,T^{-1}\}$ transformation and investigate subsets of the topological system.

\begin{definition}[Topological $\{T,T^{-1}\}$ transformation]
Let $A=\{1,-1\}$ and as convention we denote the shift map on $A^{\Z}$ by $T$ which is defined by
\begin{align*}
  (T(x))_i=x_{i+1}, \text{ for any } x=(x_i)_{i\in \Z}\in A^{\Z}.
\end{align*}
The $\{T,T^{-1}\}$ transformation, denoted by $S$, on $A^{\Z}\times A^{\Z}$ is defined by
\begin{align*}
  S(x,y)=\begin{cases}
    &\big(T(x), T(y)\big), \quad\quad\text{ if }y_0=1;\\
    &\big(T^{-1}(x), T(y)\big), \quad\text{ if }y_0=-1.
  \end{cases}
\end{align*}
\end{definition}
Then $S^n(x,y)=\big(T^{\omega(y,n)}(x),T^ny\big)$ for $n\in\Z$, where
\begin{align*}
  \omega(y,n):=\begin{cases}
    &\sum_{j=0}^{n-1}y_j,\quad\quad\quad\text{ if }n>0;\\
    &0,\qquad\qquad\quad\quad \text{ if }n=0;\\
    &-\sum_{j=n}^{-1}y_j, \quad\quad\,\, \text{if }n<0.
  \end{cases}
\end{align*}

Clearly for the system $(A^{\Z}\times A^{\Z},S)$, the acting group $G$ here is the integer group $\Z$. The F{\o}lner sequence $\{F_n\}$ is chosen naturally to be $F_n=\{0,1,\cdots,n-1\}:=[0,n-1]$.
Define a metric $\rho$ on $A^{\Z}\times A^{\Z}$ by
\begin{align*}
  \rho\big((x,y),(x',y')\big)=\max \{d(x,x'),d(y,y')\},
\end{align*}
where $d$ is the metric on $A^{\Z}$ defined by
\begin{align*}
  d(x,y)=2^{-n}, \text{ where }n=\min\{|i|: x_i\neq y_i\}.
\end{align*}
We note that the metric $d$ here is different from \eqref{metric}.

Let $\pi: A^{\Z}\times A^{\Z}\rightarrow A^{\Z}$ be the projection to the second coordinate. Then it induces a factor map between $(A^{\Z}\times A^{\Z},S)$
and $(A^{\Z},T)$. For any $y\in A^{\Z}$, let $E_{y}:=A^{\Z}\times \{y\}$ denote the fiber of $y$ under the factor map $\pi$. We denote by
$$h_{top}^P(E_{y},S)=h_{top}^P(E_{y},\{F_n\})\text{ and } h_{top}^B(E_{y},S)=h_{top}^B(E_{y},\{F_n\}),$$
the packing and Bowen entropies of $E_y$ for the $\Z$-system
$(A^{\Z}\times A^{\Z},S)$ respectively.

For $n>0$, denote by
\begin{align*}
  M(y,n)=\max_{0\le i\le n}\omega(y,i) \text{ and }m(y,n)=\min_{0\le i\le n}\omega(y,i).
\end{align*}

\begin{proposition}\label{prop-5-11}
\begin{enumerate}
\item $h_{top}^P(E_{y},S)=\limsup\limits_{n\rightarrow+\infty}\frac{M(y,n)-m(y,n)}{n}h_{top}(A^{\Z},T);$
  \item $h_{top}^B(E_{y},S)=\liminf\limits_{n\rightarrow+\infty}\frac{M(y,n)-m(y,n)}{n}h_{top}(A^{\Z},T)$.
\end{enumerate}
Here $h_{top}(A^{\Z},T)(=\log 2)$ is the topological entropy of the symbolic dynamical system $(A^{\Z},T)$. Hence $E_y$ is regular if and only if
$\lim\limits_{n\rightarrow+\infty}\frac{M(y,n)-m(y,n)}{n}$ exists.
\end{proposition}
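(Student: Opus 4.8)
The plan is to reduce both statements to a single combinatorial \emph{window lemma} describing how the Bowen metric behaves on one fibre, and then to extract the $\limsup$ (packing) and the $\liminf$ (Bowen) from it. Fix $y$ and take two points $(x,y),(x',y)\in E_y$ of the same fibre. Since $S^i(x,y)=(T^{\omega(y,i)}x,T^iy)$ and $S^i(x',y)=(T^{\omega(y,i)}x',T^iy)$ share their second coordinate, the definition of $\rho$ gives
$$\rho_{F_n}\big((x,y),(x',y)\big)=\max_{0\le i\le n-1}d\big(T^{\omega(y,i)}x,T^{\omega(y,i)}x'\big).$$
Because $\omega(y,0)=0$ and $\omega(y,i+1)-\omega(y,i)=y_i\in\{1,-1\}$, the exponents $\{\omega(y,i):0\le i\le n-1\}$ fill out exactly the integer interval $[m(y,n-1),M(y,n-1)]$. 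Translating $d(x,x')=2^{-\min\{|i|:x_i\ne x'_i\}}$, for $\varepsilon=2^{-l}$ I get that $(x,y)$ and $(x',y)$ are $(F_n,\varepsilon)$-close iff $x$ and $x'$ agree on the window $[m(y,n-1)-l,M(y,n-1)+l]$, whose length is $M(y,n-1)-m(y,n-1)+2l+1$. This observation controls every separated set, spanning set and measure estimate below; in particular it yields $s_{F_n}(E_y,\varepsilon)=r_{F_n}(E_y,\varepsilon)=2^{\,M(y,n-1)-m(y,n-1)+2l+1}$, hence $h^{UC}_{top}(E_y,\varepsilon,\{F_n\})=\limsup_n\frac{M(y,n)-m(y,n)}{n}\log 2$ for every $\varepsilon$ (the additive $2l+1$ and the shift $n-1\to n$ wash out in the limit, using $|F_n|=n$).

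For part (1) the upper bound is then immediate: by Proposition \ref{prop-packing-uc}, $h^P_{top}(E_y,\varepsilon,\{F_n\})\le h^{UC}_{top}(E_y,\varepsilon,\{F_n\})$, and letting $\varepsilon\to0$ gives $h^P_{top}(E_y,S)\le\limsup_n\frac{M(y,n)-m(y,n)}{n}\log 2$. For the matching lower bound I would feed the measure $\mu=\nu\times\delta_y$, with $\nu$ the $(\tfrac12,\tfrac12)$-Bernoulli measure on $A^{\Z}$, into the packing variational principle (Theorem \ref{th-viriational-pack}); note $\mu\in M(X)$, $\mu(E_y)=1$, and $\{F_n\}=\{[0,n-1]\}$ satisfies \eqref{eq-1-1}. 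Since $\mu$ is carried by $E_y$, the window lemma gives $\mu(B_{F_n}((x,y),\varepsilon))=2^{-(M(y,n-1)-m(y,n-1)+2l+1)}$ for every $(x,y)\in E_y$, so $\overline h_\mu((x,y),\varepsilon,\{F_n\})=\limsup_n\frac{M(y,n)-m(y,n)}{n}\log 2$ for all $\varepsilon$ and all points; integrating the constant integrand, $\overline h^{loc}_\mu(E_y,\{F_n\})=\limsup_n\frac{M(y,n)-m(y,n)}{n}\log 2$, and Theorem \ref{th-viriational-pack} delivers the reverse inequality. This proves (1), with $h_{top}(A^{\Z},T)=\log 2$.

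For part (2) the upper bound uses that the $\liminf$ is realized along a subsequence $n_k\uparrow\infty$ with $\frac{M(y,n_k)-m(y,n_k)}{n_k}\log 2<s-\gamma$ for a prescribed $s>\liminf$ and some $\gamma>0$. For each $N$ I cover $E_y$ by the $r_{F_{n_k}}(E_y,\varepsilon)$ Bowen balls of index $n_k\ge N$ coming from a minimal spanning set; their total weight $r_{F_{n_k}}(E_y,\varepsilon)e^{-s n_k}\le 2^{2l+1}e^{-\gamma n_k}\to0$ forces $\mathcal M(E_y,\varepsilon,s)=0$, so $h^B_{top}(E_y,S)\le\liminf_n\frac{M(y,n)-m(y,n)}{n}\log 2$. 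The lower bound is the delicate step, and I would carry it out by a mass distribution argument with the same $\mu=\nu\times\delta_y$. The key point is that for a Bowen ball $B=B_{F_n}((x_i,y_i),\varepsilon)$ from a cover of $E_y$, whose centre need not lie on the fibre, if $B\cap E_y\ne\emptyset$ then $y_i$ must agree with $y$ on $[0,n-1]$, whence $\omega(y_i,j)=\omega(y,j)$ for $0\le j\le n-1$ and the window lemma applies verbatim, giving $\mu(B)\le 2^{-(M(y,n-1)-m(y,n-1))}$. For $s<\liminf_n\frac{M(y,n)-m(y,n)}{n}\log 2$ there is $N_0$ with $2^{-(M(y,n-1)-m(y,n-1))}<e^{-sn}$ once $n\ge N_0$; summing over any cover whose indices are all $\ge N_0$ gives $\sum_i e^{-s n_i}\ge\sum_i\mu(B_i)\ge\mu(E_y)=1$, so $\mathcal M(E_y,\varepsilon,s)\ge1$ and $h^B_{top}(E_y,S)\ge s$. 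Letting $s\uparrow\liminf$ completes (2).

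The hard part will be precisely this off-fibre centre issue in the Bowen lower bound: one must verify that a Bowen ball meeting $E_y$ has its centre's second coordinate agreeing with $y$ on the full window, so that $\omega(y_i,\cdot)$ and $\omega(y,\cdot)$ coincide on $[0,n-1]$ and the clean estimate $\mu(B)\le 2^{-(M(y,n-1)-m(y,n-1))}$ survives; everything else is a mechanical translation through the window lemma. Finally, comparing (1) and (2), $E_y$ is regular in the sense of Definition \ref{def-regular} exactly when the $\limsup$ and the $\liminf$ agree, i.e.\ when $\lim_{n\to\infty}\frac{M(y,n)-m(y,n)}{n}$ exists.
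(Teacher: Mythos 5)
Your proposal is correct, and its skeleton coincides with the paper's: the fiber identity $B_n((x,y),\varepsilon,\rho)\cap E_y=B_{[m(y,n-1),M(y,n-1)]}(x,\varepsilon,d)\times\{y\}$ (your ``window lemma'' is exactly the paper's \eqref{5-5}), the measure $\nu\times\delta_y$ fed into the packing variational principle (Theorem \ref{th-viriational-pack}) for the lower bound in (1), the bound $h^P_{top}\le h^{UC}_{top}$ (Proposition \ref{prop-packing-uc}) for the upper bound in (1), and single-scale covers along a subsequence realizing the $\liminf$ for the upper bound in (2). The one genuine divergence is the lower bound in (2). The paper computes the \emph{lower} local entropy $\underline h^{loc}_{\nu\times\delta_y}(E_y,S)=\liminf_{n\to\infty}\frac{M(y,n)-m(y,n)}{n}\log 2$ and invokes the variational principle for amenable Bowen entropy (Theorem 3.1 of \cite{ZC}); you instead run a direct mass-distribution argument: any countable cover of $E_y$ by Bowen balls of index $n_i\ge N_0$ satisfies $\sum_i e^{-sn_i}\ge\sum_i\mu(B_i)\ge\mu(E_y)=1$ once $s$ is strictly below the $\liminf$, forcing $\mathcal{M}(E_y,\varepsilon,s,\{F_n\})\ge 1$ and hence $h^B_{top}(E_y,S)\ge s$. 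Crucially, you identified and correctly resolved the only delicate point on that route: covering balls may be centered off the fiber, but if $B_{F_n}((x_i,y_i),\varepsilon)$ meets $E_y$ with $\varepsilon=2^{-l}$, then $y_i$ agrees with $y$ on $[-l,n-1+l]\supseteq[0,n-1]$, so $\omega(y_i,\cdot)=\omega(y,\cdot)$ on $[0,n-1]$ and the cylinder estimate $\mu(B)\le 2^{-(M(y,n-1)-m(y,n-1))}$ survives. Your route is self-contained (it does not import the Bowen-entropy variational principle) at the cost of reproving, in this special case, precisely the easy half of that principle; the paper's route is shorter but relies on the external citation. One cosmetic remark: your exact counts $s_{F_n}(E_y,2^{-l})=r_{F_n}(E_y,2^{-l})=2^{M(y,n-1)-m(y,n-1)+2l+1}$ are off by a bounded factor (separation requires strict inequality, so the correct exponent is $M(y,n-1)-m(y,n-1)+2l-1$), but since all you need is these quantities up to factors of size $2^{O(l)}$, which vanish in the exponential growth rate, nothing in the argument is affected.
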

\begin{proof}
  See Appendix A.3.
\end{proof}

{\bf Acknowledgements}
The authors are grateful to Prof. Wen Huang for his helpful discussions. This research is supported by NNSF of China (Grant No. 11790274, 12171233, 11701275, 12171175 and 11801193) and Tianyuan Mathematical Center in Southwest China.

\appendix
\renewcommand{\appendixname}{Appendix~\Alph{section}}
\section{Proofs of Proposition \ref{prop-5-4}, \ref{prop-5-8} and \ref{prop-5-11}}

 In this appendix we will give the detailed proofs of Propositions \ref{prop-5-4}, \ref{prop-5-8} and \ref{prop-5-11}.

\subsection{Proof of Proposition \ref{prop-5-4}}\

Let $E$ be a subset of the compact metric space $(A^G,d)$ as defined in Section 5.2. Recall that in Proposition \ref{prop-5-4} the F{\o}lner sequence $\{F_n\}$ satisfies the following two conditions:
\begin{enumerate}
  \item $F_mF_n\subseteq F_{m+n}$ for each $m,n\in\N$;
  \item $\lim\limits_{n\rightarrow\infty} \frac{F_{n+1}}{{F_n}}=1$.
\end{enumerate}

We divide the proof of Proposition \ref{prop-5-4} into two parts.

{\bf Part 1.  Proof of $h_{top}^B(E,\{F_n\})={\rm dim}_HE$.}
\begin{proof}
  Let $s>h_{top}^B(E,\{F_n\})$. From the definition of Bowen entropy, we have
  $$\mathcal{M}(E,\varepsilon,s,\{F_n\})=0, \text{ for any }\varepsilon>0.$$
  Hence for any $N>0$, $\mathcal{M}(E,N,1,s,\{F_n\})=0$. For any $\delta>0$, there exists a countable familiy $\{B_{F_{n_i}}(x^i,1)\}$ with
$x^i\in E, n_i\ge N$ and $\bigcup\limits_i B_{F_{n_i}}(x^i,1)\supset E$ such that
$$\sum_ie^{-s|F_{n_i}|}<\delta.$$
Notice that
\begin{align}\label{A-1}
  B_{F_{n_i}}(x^i,1)&=\{y\in A^{G}:\rho(gx^i,gy)<1,\forall g\in F_{n_i}\}\nonumber\\
  &= \{y\in A^{G}:(gx^i)_{e_G}=(gy)_{e_G},\forall g\in F_{n_i}\}\nonumber\\
  &=[x^i|_{F_{n_i}}]
\end{align}
and ${\rm diam}[x^i|_{F_{n_i}}]=e^{-|F_{n_i}|}\le e^{-|F_N|}$. Since the family of cylinders $\{[x^i|_{F_{n_i}}]\}$ covers $E$,
we have
$$\mathcal H_{e^{-|F_N|}}^s(E)\le \sum_ie^{-s|F_{n_i}|}<\delta.$$
Therefore $\mathcal H_{e^{-|F_N|}}^s(E)=0$ and then $\mathcal H^s(E)=0$. This means that ${\rm dim}_HE\le s$. Since $s>h_{top}^B(E,\{F_n\})$ is arbitrary,
$h_{top}^B(E,\{F_n\})\ge {\rm dim}_HE$.

Now we will show $h_{top}^B(E,\{F_n\})\le {\rm dim}_HE$.

If $h_{top}^B(E,\{F_n\})=0$, then there is nothing to prove. Assume $h_{top}^B(E,\{F_n\})>0$ and let $0<s<h_{top}^B(E,\{F_n\})$,
then there exists $0<\varepsilon<1$ such that
\begin{align}\label{A-2}
  \mathcal M(E,\varepsilon,s,\{F_n\})>1.
\end{align}
Assume $\varepsilon\in (e^{-|F_k|},e^{-|F_{k-1}|}]$ for some $k\in \N$ and let $\eta>0$ be fixed. By condition (2), there exists
$N'>k$ such that
$$\frac{|F_n|}{|F_{n-k}|}<1+\eta, \text{ whenever }n>N'.$$
By \eqref{A-2}, there exists $N>N'$ such that $\mathcal M(E,N,\varepsilon,s,\{F_n\})>1$.

Let $\{E_i\}_{i=1}^\infty$ be any countable family that covers $E$ and ${\rm diam}E_i<e^{-|F_{N+k}|}$ for each $i$.
By the definition of the metric $d$, ${\rm diam}E_i=e^{-|F_{n_i}|}$ for some $n_i>N+k$. Choose any point $x^i\in E_i$,
then $E_i\subset [x^i|_{F_{n_i}}]$.

Noticing that
\begin{align}\label{A-3}
  B_{F_{n_i-k}}(x^i,\varepsilon)&=\{y\in A^G: \rho(gx^i,gy)<\varepsilon, \text{ for all }g\in F_{n_i-k}\}\nonumber\\
  &=\{y\in A^G: \rho(gx^i,gy)\le e^{|F_k|}, \text{ for all }g\in F_{n_i-k}\} \nonumber \\
  &=\{y\in A^G: (gx^i)|_{F_k}=(gy)|_{F_k}, \text{ for all }g\in F_{n_i-k}\} \nonumber \\
  &=\{y\in A^G: x^i|_{F_kF_{n_i-k}}=y|_{F_kF_{n_i-k}}\}=[x^i|_{F_kF_{n_i-k}}] \nonumber \\
  &\supseteq [x^i|_{F_{n_i}}]\; (\text{ since }F_kF_{n_i-k}\subseteq F_{n_i}),
\end{align}
the family $\{B_{F_{n_i-k}}(x^i,\varepsilon)\}_{i=1}^\infty$ also covers $E$ (with each $n_i-k>N$).
So $$\sum_{i=1}^\infty e^{-s|F_{n_i-k}|}\ge \mathcal M(E,N,\varepsilon,s,\{F_n\})>1.$$
And hence
\begin{align*}
  \sum_{i=1}^\infty e^{-\frac{s}{1+\eta}|F_{n_i}|}=\sum_{i=1}^\infty e^{-\frac{s}{1+\eta}\frac{|F_{n_i}|}{|F_{n_i-k}|}|F_{n_i-k}|}>\sum_{i=1}^\infty e^{-s|F_{n_i-k}|}>1,
\end{align*}
which implies that $\mathcal H^{\frac{s}{1+\eta}}(E)\ge \mathcal H_{e^{-|F_{N+k}|}}^{\frac{s}{1+\eta}}(E)\ge 1$.
Thus ${\rm dim}_HE\ge \frac{s}{1+\eta}$.

Letting $\eta\rightarrow 0$, we have ${\rm dim}_HE\ge s$. Since $0<s<h_{top}^B(E,\{F_n\})$ is chosen arbitrarily, we obtain that
$h_{top}^B(E,\{F_n\})\le {\rm dim}_HE$.

\end{proof}

{\bf Part 2. $h_{top}^P(E,\{F_n\})={\rm dim}_PE$.}
\begin{proof}
We show $h_{top}^P(E,\{F_n\})\ge {\rm dim}_PE$ firstly.

Let $s>h_{top}^P(E,\{F_n\})$. Then for any $0<\varepsilon<1$ it holds that $h_{top}^P(E,\varepsilon,\{F_n\})<s$ and
hence $\mathcal P(E,\varepsilon, s,\{F_n\})=0$. So for any $\delta>0$, there exists a countable covering $\{E_i\}_{i=1}^\infty$ of $E$
such that $$\sum_{i=1}^\infty P(E_i,\varepsilon, s,\{F_n\})<\delta.$$
For each $i$, we can find $N_i\in\N$ sufficiently large such that
$$P(E_i,N_i,\varepsilon, s,\{F_n\})<P(E_i,\varepsilon, s,\{F_n\})+\frac{\delta}{2^i}.$$
Let $\{B_{i,j}\}_{j=1}^\infty$ be a family of disjoint closed balls in $A^G$ (with centers $x^{i,j}\in E_i$ and ${\rm diam} B_{i,j}\le e^{-|F_{N_i}|}$)
such that
$$P_{e^{-|F_{N_i}|}}^s(E_i)\le \sum_{j=1}^\infty ({\rm diam }B_{i,j})^s+\frac{\delta}{2^i}.$$
From the definition of the metric $d$, ${\rm diam }B_{i,j}=e^{-|F_{n_{i,j}}|}$ for some $n_{i,j}\ge N_i$.
Noticing that
\begin{align*}
  B_{i,j}&=B(x^{i,j}, e^{-|F_{n_{i,j}}|})=[x^{i,j}|_{F_{n_{i,j}}}]\\
  &\supseteq \overline B_{F_{n_{i,j}}}(x^{i,j},\varepsilon)\,\,(\text{here we have assumed }\varepsilon<1),
\end{align*}
$\{\overline B_{F_{n_{i,j}}}(x^{i,j},\varepsilon)\}_{j=1}^\infty$ is also a pairwise disjoint family.
Hence
\begin{align*}
  \mathcal P^s(E)&\le \sum_{i=1}^\infty P^s(E_i)\le \sum_{i=1}^\infty P_{e^{-|F_{N_i}|}}^s(E_i)
  \le \sum_{i=1}^\infty \bigg(\sum_{j=1}^\infty e^{-s|F_{n_{i,j}}|}+\frac{\delta}{2^i}\bigg)\\
  &\le \sum_{i=1}^\infty P(E_i, N_i, \varepsilon, s, \{F_n\}) +\delta
  < \sum_{i=1}^\infty \bigg (P(E_i, \varepsilon, s, \{F_n\}) +\frac{\delta}{2^i} \bigg )+\delta\\
  &<3\delta.
\end{align*}
Thus $\mathcal P^s(E)=0$ and ${\rm dim}_PE\le s$. Since $s>h_{top}^P(E,\{F_n\})$ is arbitrary, we obtain ${\rm dim}_PE\le h_{top}^P(E,\{F_n\})$.

Next we will show $h_{top}^P(E,\{F_n\})\le {\rm dim}_PE$.

Assume $h_{top}^P(E,\{F_n\})>0$, otherwise there is nothing to prove. Let $0<s<h_{top}^P(E,\{F_n\})$. Then there exists $0<\varepsilon <1$ such that
$h_{top}^P(E,\varepsilon,\{F_n\})>s$. Assume $\varepsilon\in (e^{-|F_k|},e^{-|F_{k-1}|}]$ for some $k\in \N$ and let $\eta>0$ be fixed. Similar to Part 1, by condition (2), there exists
$N>k$ such that
$$\frac{|F_n|}{|F_{n-k}|}<1+\eta, \text{ whenever }n>N.$$
Let $\{E_i\}_{i=1}^\infty$ be any countable family that covers $E$, then
$$\sum_{i=1}^\infty P(E_i,\varepsilon,s,\{F_n\})\ge \mathcal P(E,\varepsilon,s,\{F_n\})=\infty.$$
For each $i$, let $N_i>N$ be sufficiently large such that
$$P_{e^{-|F_{N_i}|}}^{\frac{s}{1+\eta}}(E_i)<P^{\frac{s}{1+\eta}}(E_i)+\frac{1}{2^i}.$$
Let $\{\overline B_{F_{n_{i,j}}}(x^{i,j},\varepsilon)\}_{j=1}^\infty$ be a disjoint family with $x^{i,j}\in E_i$ and $n_{i,j}\ge N_i$ for each $j$
such that
$$P(E_i,N_i,\varepsilon,s, \{F_n\})<\sum_{j=1}^\infty e^{-s|F_{n_{i,j}}|}+\frac{1}{2^i}.$$
With similar discussion as \eqref{A-3}, $\overline B_{F_{n_{i,j}}}(x^{i,j},\varepsilon)\supseteq [x^{i,j}|_{F_{n_{i,j}+k}}]$ and hence
$\{[x^{i,j}|_{F_{n_{i,j}+k}}]\}_{j=1}^\infty$ is a disjoint family of closed balls with
$${\rm diam} [x^{i,j}|_{F_{n_{i,j}+k}}]=e^{-|F_{n_{i,j}+k}|}\le e^{-|F_{N_i+k}|}.$$
Therefore
$$P_{e^{-|F_{N_i+k}|}}^{\frac{s}{1+\eta}}(E_i)\ge \sum_{j=1}^\infty e^{-\frac{s}{1+\eta}|F_{n_{i,j}+k}|}.$$
Thus we have
\begin{align*}
  \sum_{i=1}^\infty P^{\frac{s}{1+\eta}}(E_i)&>\sum_{i=1}^\infty P_{e^{-|F_{N_i+k}|}}^{\frac{s}{1+\eta}}(E_i)-1
  \ge \sum_{i=1}^\infty \sum_{j=1}^\infty e^{-\frac{s}{1+\eta}|F_{n_{i,j}+k}|} -1\\
  &= \sum_{i=1}^\infty \sum_{j=1}^\infty e^{-\frac{s}{1+\eta}\frac{|F_{n_{i,j}+k}|}{|F_{n_{i,j}}|}|F_{n_{i,j}}|} -1
  \ge \sum_{i=1}^\infty \sum_{j=1}^\infty e^{-s|F_{n_{i,j}}|} -1 \\
  &> \sum_{i=1}^\infty \bigg( P(E_i,N_i,\varepsilon,s, \{F_n\})-\frac{1}{2^i}\bigg )-1\\
  &\ge \sum_{i=1}^\infty P(E_i,\varepsilon,s, \{F_n\})-2=\infty,
\end{align*}
which implies that $\mathcal P^{\frac{s}{1+\eta}}(E)=\infty$ and then ${\rm dim}_P E\ge \frac{s}{1+\eta}$.

Letting $\eta\rightarrow 0$, we have ${\rm dim}_P E\ge s$. Since $0<s<h_{top}^P(E,\{F_n\})$ is chosen arbitrarily, we obtain that
$h_{top}^P(E,\{F_n\})\le {\rm dim}_P E$.
\end{proof}

\subsection{Proof of Proposition \ref{prop-5-8}}\

Recall that for Proposition \ref{prop-5-8} we let $0\le \alpha < \beta\le 1$, $A=\{0,1\}$ and let $H\subset G$ such that
\begin{align*}
  \liminf_{n\rightarrow+\infty}\frac{|H\cap F_n|}{|F_n|}=\alpha \text{ and }\limsup_{n\rightarrow+\infty}\frac{|H\cap F_n|}{|F_n|}=\beta.
\end{align*}
$X_{\alpha,\beta}\subset \{0,1\}^G$ is defined by
\begin{align*}
  X_{\alpha,\beta}=\{(x_g)_{g\in G}: x_g=0 \text{ if }g\notin H\}.
\end{align*}

Let $\mu\in M(X_{\alpha,\beta})$ such that $\displaystyle \mu([x|_{F_n}])=\frac{1}{2^{|H\cap F_n|}}$ for every $x\in  X_{\alpha,\beta}$ and $n\in\N$.
Noticing that $B_{F_n}(x,1)=[x|_{F_n}]$ (see \eqref{A-1}), we have
\begin{align*}
  \underline h_{\mu}^{loc}(X_{\alpha,\beta},\{F_n\})&=\int_{X_{\alpha,\beta}}\lim_{\varepsilon\rightarrow 0}\liminf_{n\rightarrow +\infty}-\frac{1}{|F_n|}\log \mu(B_{F_n}(x,\varepsilon))d\mu\\
  &\ge\int_{X_{\alpha,\beta}}\liminf_{n\rightarrow +\infty}-\frac{1}{|F_n|}\log \mu([x|_{F_n}])d\mu\\
  &=\liminf_{n\rightarrow +\infty}-\frac{1}{|F_n|}\log \frac{1}{2^{|H\cap F_n|}}=\alpha \log 2,
\end{align*}
and similarly,
\begin{align*}
  \overline h_{\mu}^{loc}(X_{\alpha,\beta},\{F_n\})\ge\beta\log 2.
\end{align*}
Applying Theorem 3.1 of \cite{ZC} (the variational principle for amenable Bowen entropy) and Theorem \ref{th-viriational-pack} (the variational principle for amenable packing entropy) respectively, we have
$$h_{top}^B (X_{\alpha,\beta},\{F_n\})\ge \alpha\log 2 \text{ and }h_{top}^P(X_{\alpha,\beta},\{F_n\})\ge\beta\log 2.$$

To prove $h_{top}^B (X_{\alpha,\beta},\{F_n\})\le \alpha\log 2$, by Proposition \ref{prop-5-4}, we only need to prove that
${\rm dim}_H(X_{\alpha,\beta})\le \alpha\log 2$.

Let
\begin{align}\label{En}
  E_n=\{(x_g)_{g\in G}: x_g=0 \text{ if }g\notin H\cap F_n\},
\end{align}
which is a subset of $X_{\alpha,\beta}$ with cardinality $\# E_n=2^{|H\cap F_n|}$. Note that $\bigcup_{x\in E_n} [x|_{F_n}]\supseteq X_{\alpha,\beta}$ and
${\rm diam}[x|_{F_n}]=e^{-|F_n|}$ for each $x\in E_n$. Let $\delta>0$ and $N\in \N$ be fixed. When $n>N$,
$$\mathcal H_{e^{-|F_N|}} ^{(\alpha+\delta)\log 2}(X_{\alpha,\beta}) \le \sum_{x\in E_n}e^{-(\alpha+\delta)\log 2 |F_n|}=e^{\log 2|F_n|(\frac{|H\cap F_n|}{|F_n|}-\alpha-\delta)}.$$
Since $\liminf_{n\rightarrow+\infty}\frac{|H\cap F_n|}{|F_n|}=\alpha$, we have
$$\mathcal H_{e^{-|F_N|}} ^{(\alpha+\delta)\log 2}(X_{\alpha,\beta})=0\text{ and }\mathcal H^{(\alpha+\delta)\log 2}(X_{\alpha,\beta})=\lim_{N\rightarrow \infty}\mathcal H_{e^{-|F_N|}} ^{(\alpha+\delta)\log 2}(X_{\alpha,\beta})=0.$$
Thus
$${\rm dim}_H(X_{\alpha,\beta})\le (\alpha+\delta)\log 2,$$
which deduces that
$${\rm dim}_H(X_{\alpha,\beta})\le \alpha\log 2.$$

Finally we will prove that $h_{top}^P (X_{\alpha,\beta},\{F_n\})\le \beta\log 2$.

Let $\varepsilon>0$ such that $\varepsilon\in (e^{-|F_k|},e^{-|F_{k-1}|}]$ for some $k\in\N$.
Noticing that $[x|_{F_{n+k}}]\subseteq B_{F_n}(x,\varepsilon)$ (see \eqref{A-3}, here condition (1) of Proposition \ref{prop-5-4} is used),
one can check that the set $E_{n+k}$, defined by \eqref{En}, is an $(F_n,\varepsilon)$-spanning set of $X_{\alpha,\beta}$.
Hence $r_{F_n}(X_{\alpha,\beta},\varepsilon)\le 2^{|H\cap F_{n+k}|}$. Then
$$\limsup_{n\rightarrow +\infty}\frac{1}{|F_n|}\log r_{F_n}(X_{\alpha,\beta}, \varepsilon)
\le \limsup_{n\rightarrow +\infty}\frac{|H\cap F_{n+k}|}{|F_n|}\log 2=\beta\log 2,$$
where for the last equality we use condition (2) of Proposition \ref{prop-5-4}.
Thus
$$h_{top}^P (X_{\alpha,\beta},\{F_n\})\le h_{top}^{UC}(X_{\alpha,\beta},\{F_n\})\le \beta\log 2.$$

\subsection{Proof of Proposition \ref{prop-5-11}}\

\begin{proof}
 (1). For any $\varepsilon>0$ and $x\in A^{\Z}$, in the system $(A^{\Z}\times A^{\Z},S)$, we have
  \begin{align}\label{5-5}
    B_{n}\big((x,y),\varepsilon,\rho\big)\cap E_y&=\big\{(x',y):\rho\big(S^i(x',y),S^i(x,y)\big)<\varepsilon,\text{ for }0\le i\le n-1\big\}\nonumber\\
    &=\big\{(x',y): d\big(T^{\omega(y,i)}(x'),T^{\omega(y,i)}x\big)<\varepsilon,\text{ for }0\le i\le n-1\big\}\nonumber\\
    &=B_{[m(y,n-1),M(y,n-1)]}\big(x,\varepsilon,d\big)\times\{y\}.
  \end{align}
  Let $\mu$ be the $\{\frac{1}{2},\frac{1}{2}\}$ Bernoulli measure on $(A^{\Z},T)$ and $\delta_y$ be the Dirac probability measure at the point $y$.
  Then
  \begin{align}\label{5-6}
   \overline h_{\mu\times \delta_y}^{loc}(E_y,S)&=\int_{E_y}\lim_{\varepsilon\rightarrow 0}\limsup_{n\rightarrow +\infty}-\frac{1}{n}\log \big(\mu\times \delta_y\big)\bigg(B_{n}\big((x,y),\varepsilon,\rho\big)\bigg)d\big(\mu\times \delta_y\big)\nonumber\\
   &=\int_{A^{\Z}}\lim_{\varepsilon\rightarrow 0}\limsup_{n\rightarrow +\infty}-\frac{1}{n}\log\mu\bigg(B_{[m(y,n-1),M(y,n-1)]}\big(x,\varepsilon,d\big)\bigg)d\mu\nonumber\\
   &=\limsup_{n\rightarrow+\infty}\frac{M(y,n)-m(y,n)}{n}h_{top}(A^{\Z},T),
  \end{align}
  where for the last inequality we use the simple facts that $M(y,n-1)\le M(y,n)\le M(y,n-1)+1$ and $m(y,n-1)-1\le m(y,n)\le m(y,n-1)$.
  Hence by Theorem \ref{th-viriational-pack}, we have
  \begin{align*}
     h_{top}^P(E_{y},S)&\ge \overline h_{\mu\times \delta_y}^{loc}(E_y,S)=\limsup_{n\rightarrow\infty}\frac{M(y,n)-m(y,n)}{n}h_{top}(A^{\Z},T).
  \end{align*}

  Let $E\times \{y\}$ be any $(n,\varepsilon)$-separated set for $E_y$. Note that from \eqref{5-5} $E$ must be an $([m(y,n-1),M(y,n-1)],\varepsilon)$-separated set for $A^{\Z}$.
We have
\begin{align*}
  h_{top}^{UC}(E_y,S)&\le \limsup_{n\rightarrow+\infty}\frac{M(y,n)-m(y,n)}{n}h_{top}^{UC}(A^{\Z},T)\\
  &=\limsup_{n\rightarrow+\infty}\frac{M(y,n)-m(y,n)}{n}h_{top}(A^{\Z},T).
\end{align*}
By Proposition \ref{prop-packing-uc},
  \begin{align*}
     h_{top}^P(E_{y},S)&\le h_{top}^{UC}(E_y,S)\le\limsup_{n\rightarrow\infty}\frac{M(y,n)-m(y,n)}{n}h_{top}(A^{\Z},T).
  \end{align*}
This finishes the proof of (1).

(2). Similar to \eqref{5-6}, we have
\begin{align*}
   \underline h_{\mu\times \delta_y}^{loc}(E_y,S)&=\int_{E_y}\lim_{\varepsilon\rightarrow 0}\liminf_{n\rightarrow +\infty}-\frac{1}{n}\log \big(\mu\times \delta_y\big)\bigg(B_{n}\big((x,y),\varepsilon,\rho\big)\bigg)d\big(\mu\times \delta_y\big)\\
   &=\int_{A^{\Z}}\lim_{\varepsilon\rightarrow 0}\liminf_{n\rightarrow +\infty}-\frac{1}{n}\log\mu\bigg(B_{[m(y,n-1),M(y,n-1)]}\big(x,\varepsilon,d\big)\bigg)d\mu\\
   &=\liminf_{n\rightarrow+\infty}\frac{M(y,n)-m(y,n)}{n}h_{top}(A^{\Z},T).
  \end{align*}
Hence by Theorem 3.1 of \cite{ZC}, the variational principle for amenable Bowen entropy, we have
\begin{align*}
     h_{top}^B(E_{y},S)\ge \underline h_{\mu\times \delta_y}^{loc}(E_y,S)=\liminf_{n\rightarrow\infty}\frac{M(y,n)-m(y,n)}{n}h_{top}(A^{\Z},T).
  \end{align*}

Now we are to prove the upper bound for $h_{top}^B(E_{y})$, i.e.
$$h_{top}^B(E_{y},S)\le \liminf_{n\rightarrow\infty}\frac{M(y,n)-m(y,n)}{n}h_{top}(A^{\Z},T).$$

Let $\delta>0$ be fixed. For any $\varepsilon>0$, there exists $k\in\N$ such that for any $x,z\in A^{\Z}$, whenever $x_i=z_i$ for every $|i|\le k$,
it holds that $d(x,y)<\varepsilon$. Hence for any interval $[m.n]$ of integers, we have
$$B_{[m,n]}(x,\varepsilon,d)\supseteq [x|_{[-k+m,k+n]}],$$
where $[x|_{[-k+m,k+n]}]:=\{z\in A^{\Z}: z_i=x_i, \text{ for every }-k+m\le i\le k+n\}$ is the cylinder in $A^{\Z}$.

Let
$$E_n=\{x\in A^{\Z}: x_i=1 \text{ if }i\notin [-k+m(y,n-1),k+M(y,n-1)]\}.$$
Consider the family
$$\{B_{n}\big((x,y),\varepsilon,\rho\big)\cap E_y\}_{x\in E_n}.$$
Apparently, it covers $E_y$, since $B_{n}\big((x,y),\varepsilon,\rho\big)\cap E_y=B_{[m(y,n-1),M(y,n-1)]}\big(x,\varepsilon,d\big)\times\{y\}$ (by \eqref{5-5}).
Hence for any $N\in \N$,  we have for any $n\ge N$,
\begin{align*}
  \mathcal M(E_y,N,\varepsilon,s,\{F_n\})&\le (\# E_n) e^{-sn}\\
  &=2^{M(y,n-1)-m(y,n-1)+2k+1}e^{-sn}\\
  &=e^{n (\frac{M(y,n-1)-m(y,n-1)+2k+1}{n}\log 2-s)}.
\end{align*}
Note that there exist infinitely many $n\in\N$ such that
$$\frac{M(y,n-1)-m(y,n-1)+2k+1}{n}<\liminf\limits_{n\rightarrow+\infty}\frac{M(y,n)-m(y,n)}{n}+\delta.$$
Then for any $s>(\liminf\limits_{n\rightarrow+\infty}\frac{M(y,n)-m(y,n)}{n}+\delta)\log 2$, we can deduce that
$$\mathcal M(E_y,N,\varepsilon,s,\{F_n\})=0.$$
Letting $N\rightarrow +\infty$, $\varepsilon\rightarrow 0$ and then $\delta\rightarrow 0$,
from the definition of the Bowen entropy, we can conclude that
$$h_{top}^B(E_{y},S)\le \liminf_{n\rightarrow\infty}\frac{M(y,n)-m(y,n)}{n}h_{top}(A^{\Z},T).$$
This finishes the proof of (2) of Proposition \ref{prop-5-11}.
\end{proof}

\end{document}